\newtheorem{theorem}{Theorem}[section]
\newtheorem{lemma}[theorem]{Lemma}
\newtheorem{cor}[theorem]{Corollary}
\newtheorem{prop}[theorem]{Proposition}
\theoremstyle{definition}
\newtheorem{hypothesis}[theorem]{Hypothesis}
\newtheorem{defn}[theorem]{Definition}
\newtheorem{example}[theorem]{Example}
\newtheorem{remark}[theorem]{Remark}
\numberwithin{equation}{theorem}
\newcommand{\FF}{\mathbb{F}}
\newcommand{\QQ}{\mathbb{Q}}
\newcommand{\RR}{\mathbb{R}}
\newcommand{\ZZ}{\mathbb{Z}}
\newcommand{\bv}{\mathbf{v}}
\newcommand{\bw}{\mathbf{w}}
\newcommand{\dual}{\vee}
\DeclareMathOperator{\aut}{aut}
\DeclareMathOperator{\End}{End}
\DeclareMathOperator{\Hom}{Hom}
\DeclareMathOperator{\id}{id}
\DeclareMathOperator{\len}{len}
\DeclareMathOperator{\ord}{ord}
\DeclareMathOperator{\Post}{Post}
\DeclareMathOperator{\post}{post}
\DeclareMathOperator{\Pre}{Pre}
\DeclareMathOperator{\pre}{pre}
\DeclareMathOperator{\rev}{rev}
\begin{document}

\title{On the algebraicity of generalized power series}
\author{Kiran S. Kedlaya}
\date{November 24, 2016}
\thanks{Thanks to Rodrigo Salom\~ao and Reillon Santos for initiating the discussion about \cite{k-series1} that led to the identification of Example~\ref{exa:not TR}.
Thanks also to Jason Bell, Harm Derksen, Anna Medvedovsky, and David Speyer for additional discussions. The author was supported by NSF (grants DMS-1101343, DMS-1501214) and UCSD (Stefan E. Warschawski chair).}

\begin{abstract}
Let $K$ be an algebraically closed field of characteristic $p$.
We exhibit a counterexample against a theorem asserted in one of our earlier papers, which claims to characterize the integral closure of $K((t))$ within the field of Hahn-Mal'cev-Neumann generalized power series. We then give a corrected characterization, generalizing our earlier description in terms of finite automata in the case where $K$ is the algebraic closure of a finite field.
We also characterize the integral closure of $K(t)$, thus generalizing a well-known theorem of Christol and suggesting a possible framework for computing in this integral closure. We recover various corollaries on the structure of algebraic generalized power series; one of these is an extension of Derksen's theorem on the zero sets of linear recurrent sequences in characteristic $p$.
\end{abstract}

\maketitle

\section{Introduction}

It is well-known that for $K$ an algebraically closed field of characteristic zero, an algebraic closure of the field $K((t))$ of formal Laurent series can be constructed by forming the field of \emph{Puiseux series}
\[
\bigcup_{n=1}^{\infty} K((t^{1/n})).
\]
For the remainder of this paper, let $K$ instead be an algebraically closed field of characteristic $p>0$. In this case, the field of Puiseux series is not algebraically closed, e.g., because it contains no element $x$ satisfying $x^p - x = t^{-1}$.

The goal of our previous paper \cite{k-series1} was to construct an algebraic closure of $K((t))$ within the field $K((t^{\QQ}))$ of \emph{generalized power series}. An element of this field (which may also be called a \emph{Hahn series} or \emph{Mal'cev-Neumann series}) is a formal linear combination $\sum_{i \in \QQ} c_i t^i$ with $c_i \in K$ whose support (i.e., the set of $i \in \QQ$ for which $c_i \neq 0$) is a well-ordered subset of $\QQ$.
The natural addition and multiplication operations turn out to be well-defined and give $K((t^\QQ))$ the structure of an algebraically closed field; consequently, the integral closure of $K((t))$ within this field is itself an algebraic closure of $K((t))$.

Unfortunately, it was recently pointed out by Rodrigo Salom\~ao and Reillon Santos that if $K \neq \overline{\FF}_p$, the main theorem of \cite{k-series1} is false as stated: the subset of $K((t^{\QQ}))$ identified in \cite[Theorem~8]{k-series1} is in fact a proper subset of the integral closure of $K((t))$. The first purpose of this paper is to describe a simple explicit counterexample to \cite[Theorem~8]{k-series1} (see Example~\ref{exa:not TR}) and use it to isolate the error in the arguments of \cite{k-series1}.
The second purpose of this paper is to give a corrected description of the integral closure of $K((t))$ in $K((t^{\QQ}))$ (see Theorem~\ref{T:main}), recover all of the other results of \cite{k-series1} (with minor modifications as needed), and verify that the statements of 
\cite[Theorem~8, Corollary~11]{k-series1} are correct when $K = \overline{\FF}_p$.

As in \cite{k-series1}, the description of the algebraic closure of $K((t))$ consists of two parts: a restriction on the supports of generalized power series, and a restriction on the coefficients corresponding to elements of the support. The restriction on the supports turns out to be the same as in \cite{k-series1}, but the restriction on coefficients is somewhat more permissive.

This modified description is inspired by our later paper \cite{k-series-automata},
in which the case where $K$ is the algebraic closure of a finite field $\FF_q$ is given an alternate treatment using \emph{finite automata} (or equivalently \emph{regular languages}), in the manner of Christol's theorem on the integral closure of $\FF_q(t)$ within $\FF_q((t))$. We recast this setup in terms of linear algebra (or more precisely, semilinear algebra with respect to Frobenius) to obtain a valid result for arbitrary $K$. In the process, we characterize the integral closures of both $K(t)$ and $K((t))$ in $K((t^{\QQ}))$ (see Theorem~\ref{T:integral closure of rational field} for the former) and thus in particular obtain an extension of Christol's theorem. We also obtain a framework for computing in these integral closures, which we expect to be more practical than the direct use of finite automata suggested in \cite{k-series-automata}; see \S\ref{sec:computational}
for further discussion.

As in \cite{k-series1}, our main theorems have a number of corollaries which can be stated somewhat more simply. Most of these are collected in \S\ref{sec:corollaries}, to which the first-time reader may wish to turn first. In \S\ref{sec:zero sets}, we deduce an extension of Derksen's positive-characteristic analogue of the Skolem-Mahler-Lech theorem \cite{derksen}, which asserts (in its simplest form) that the zero set of a linear recurrent sequence over $K$ is $p$-automatic; namely, we prove the analogous statement for the coefficients of any generalized power series which is algebraic over $K(t)$. Interestingly, the argument specializes to a short and apparently new proof of Derksen's theorem itself.
(It should be noted that in \cite{derksen}, one also finds some more refined statements about the structure of zero sets of linear recurrent sequences; we do not treat these statements here.)

We note in passing that in addition to the aforementioned papers, our paper \cite{k-series2} on the construction of algebraic closures of $p$-adic fields also makes use of \cite{k-series1}. The statements of \cite[Theorem~10, Theorem~11]{k-series2} must thus be corrected in a similar manner (see Theorem~\ref{T:mixed1} and Theorem~\ref{T:mixed2}).
At the end of the paper (see \S\ref{sec:effect}), we conduct a survey of published papers that cite \cite{k-series1, k-series2} to confirm that (as far as we have able to determine) no other results are affected.

\section{A counterexample}

We begin by introducing enough terminology and notation to recall the statement of the main theorem of \cite{k-series1}, then describe the counterexample alluded to in the introduction.

\begin{defn} \label{D:sabc}
For $a$ a positive integer, $b$ an integer, and $c$ a nonnegative integer, define the set
\[
S_{a,b,c} = \left\{\frac{1}{a} (n- b_1 p^{-1} - b_2 p^{-2} - \cdots): n \geq -b, b_i \in \{0,\dots,p-1\}, \sum b_i \leq c\right\}.
\]
In words, $S_{a,b,c}$ consists of those $s \in \QQ$ such that the fractional part of $-as$, when written in base $p$, has digit sum at most $c$.
\end{defn}

\begin{defn} \label{D:LRR}
For $k$ a nonnegative integer,
a \emph{linearized recurrence relation (LRR)} over $K$ of length $k$ is a condition on an infinite sequence $c_0, c_1, \dots$ taking the form
\begin{equation} \label{eq:LRR}
d_0 c_n + d_1 c_{n+1}^p + \cdots + d_k c_{n+k}^{p^k} = 0 \qquad (n=0,1,\dots)
\end{equation}
for some $d_0,\dots,d_k \in K$. Note that the set of solutions of a given LRR is stable under addition and $\FF_p$-scalar multiplication, but not $K$-scalar multiplication: multiplying a solution by a scalar in $K$ gives instead a solution of a new LRR of the same length.
\end{defn}

\begin{defn}
Let $T_c = S_{1,0,c} \cap (-1,0)$. For $k$ a positive integer, a function $f: T_c \to K$ is 
\emph{twist-recurrent of order $k$} if there exist $d_0,\dots,d_k \in K$ such that the LRR
\eqref{eq:LRR} holds for any sequence $\{c_n\}$ of the form
\begin{equation} \label{eq:TR sequences}
c_n = f(-b_1 p^{-1} - \cdots - b_{j-1} p^{-j+1}+ p^{-n}(b_j p^{-j} + \cdots)) \qquad (n\geq 0)
\end{equation}
for $j$ a positive integer and $b_1,b_2,\ldots \in \{0,\dots,p-1\}$ with $\sum b_i \leq c$.
\end{defn}

\begin{defn} \label{D:twist-recurrent}
A series $x \in K((t^\QQ))$ is \emph{twist-recurrent} if the following conditions hold.
\begin{enumerate}
\item[(a)]
There exist $a,b,c$ such that $x$ is supported on $S_{a,b,c}$.
\item[(b)]
For some (hence any) $a,b,c$ as in (a) and each integer $m \geq -b$, the function $f_m: T_c \to K$ given by $f_m(z) = x_{(m+z)/a}$ is twist-recurrent (of some order).
\item[(c)]
The functions $f_m$ span a finite-dimensional vector space over $K$.
\end{enumerate}
\end{defn}

The proof of \cite[Theorem~8]{k-series1} consists of three steps, the first two of which are correct and yield the following result. We will recover this statement later as a
consequence of Theorem~\ref{T:main}.
\begin{prop} \label{P:TR algebraic}
The twist-recurrent elements of $K((t^\QQ))$ form a subring which is integral over $K((t))$. In particular, this subring, being integral over a field, is itself a field.
\end{prop}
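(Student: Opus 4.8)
The plan is to verify closure under the ring operations directly at the level of the three conditions in Definition~\ref{D:twist-recurrent}, then invoke the fact that a finitely generated (as a module) algebra over a field which is itself a domain is a field. Condition~(a) is the easiest to handle: given $x$ supported on $S_{a,b,c}$ and $y$ supported on $S_{a',b',c'}$, one passes to a common denominator $a'' = \mathrm{lcm}(a,a')$ and checks that $S_{a,b,c}, S_{a',b',c'} \subseteq S_{a'',b'',c''}$ for suitable $b'',c''$; the only subtlety is that addition of two elements of base-$p$ fractional parts with digit sums $\leq c$ and $\leq c'$ produces carries, but the resulting digit sum is still bounded (by $c+c'$ say), and multiplication of series adds exponents, so one must bound the digit sum of a sum of two elements of $S_{1,0,c} \cap S_{1,0,c'}$. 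For the rescaling by $a''$ one observes that multiplying the argument by an integer changes the base-$p$ expansion in a controlled way. This is bookkeeping with the sets $S_{a,b,c}$ and should be routine.

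Next I would address conditions (b) and (c) for sums. Closure under addition is formal: if $x$ is supported on $S_{a,b,c}$ with functions $f_m$ and $y$ on $S_{a,b,c}$ (after passing to common $a,b,c$) with functions $g_m$, then $x+y$ has functions $f_m + g_m$. A sum of two twist-recurrent functions is twist-recurrent because the set of sequences of the form \eqref{eq:TR sequences} annihilated by \emph{some} common LRR is closed under addition — here one uses that the solution sets of LRRs are $\FF_p$-vector spaces (Definition~\ref{D:LRR}) and that two LRRs can be combined into one whose solution set contains the sum of the two solution sets (a "least common left multiple" type construction in the twisted polynomial ring $K\{F\}$, or more concretely by taking the LRR whose characteristic operator is the product of the two twisted operators). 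The span of the $f_m+g_m$ sits inside the span of the $f_m$ plus the span of the $g_m$, hence is still finite-dimensional. Scalar multiplication by $\lambda \in K$ is handled by the remark after Definition~\ref{D:LRR}: it replaces an LRR by another of the same length, and leaves the $K$-span of the coefficient functions unchanged.

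The main obstacle is closure under multiplication, specifically re-establishing conditions (b) and (c) for $xy$. Writing $(xy)_{(m+z)/a}$ as a convolution $\sum x_{(m'+z')/a}\, y_{(m''+z'')/a}$ over the well-ordered support, one sees that the coefficient functions of $xy$ are built from products and shifts of the coefficient functions of $x$ and $y$; the key point is to show that a product of a twist-recurrent function with a twist-recurrent function, composed with the relevant substitutions of base-$p$ digit strings, still satisfies an LRR, and that only finitely many such products arise up to the finite-dimensionality in condition~(c). For the LRR part I would argue semilinearly: the space of $K$-valued functions on $T_c$ satisfying \emph{some} LRR is exactly the union of finite-dimensional Frobenius-semilinear-stable subspaces, and the tensor/product operation on such subspaces again lands in such a subspace — this is where the structure is genuinely being used, and where one must be careful that the substitution \eqref{eq:TR sequences} interacts correctly with multiplication of exponents (the carries again). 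For condition~(c), finite-dimensionality of the span of the $f_m$ for $x$ and of the $g_m$ for $y$ forces the span of the coefficient functions of $xy$ into the image of a tensor product of two finite-dimensional spaces under a bilinear map, hence finite-dimensional. Once (a), (b), (c) are verified for sums, scalar multiples, and products, the twist-recurrent elements form a subring $R$ with $K((t)) \subseteq R$; Proposition~\ref{P:TR algebraic}'s integrality claim then follows by exhibiting, for each twist-recurrent $x$, a monic polynomial over $K((t))$ that it satisfies — which I expect to extract from the finite-dimensionality in (c) together with the Frobenius-semilinear structure on the coefficient functions, realizing $x$ inside a finite $K((t))$-submodule of $K((t^\QQ))$ stable under multiplication by $x$. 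The final sentence is immediate: a domain integral over a field is a field.
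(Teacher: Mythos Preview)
Your direct-verification approach is precisely the strategy of the first two steps of the proof of \cite[Theorem~8]{k-series1}, and that is exactly what the paper cites as its proof of Proposition~\ref{P:TR algebraic}; the paper does not reproduce those steps. The paper's own new route is different: it develops the machinery of $p$-quasi-automatic series, proves via semilinear algebra and a decimation argument that these coincide with the integral closure of $K(t)$ (Lemma~\ref{L:automatic to algebraic}, Theorem~\ref{T:gen Christol}, Theorem~\ref{T:integral closure of rational field}), characterizes the integral closure of $K((t))$ (Theorem~\ref{T:main}), and then simply observes that twist-recurrent series visibly satisfy the hypotheses of Theorem~\ref{T:main}. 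What that buys is a uniform framework delivering integrality, closure under Hadamard products, truncation stability, and the corollaries of \S\ref{sec:corollaries} all at once, at the cost of more infrastructure; your route is more self-contained but must confront the base-$p$ carry bookkeeping head-on (which, as you note, is where the real work in the multiplication step lies).

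One soft spot in your sketch is the integrality step. A finite $K((t))$-submodule stable under multiplication by $x$ is not readily extracted from conditions~(b) and~(c), because the parameter $c$ and the dimension in~(c) may grow when one passes to powers of $x$; so the finite-faithful-module criterion does not apply without further input. The mechanism that actually does the work---both in \cite{k-series1} and, in the automatic-series formulation, in Lemma~\ref{L:automatic to algebraic} here---is to use the Frobenius-semilinear structure to produce a $K((t))$-linear relation among $x, x^p, x^{p^2},\dots$ and then invoke Ore's lemma (as in \cite[Lemma~12.2.3]{allouche-shallit} or \cite[Lemma~3.3.4]{k-series-automata}). You gesture at ``the Frobenius-semilinear structure on the coefficient functions,'' which is the right ingredient, but the route to algebraicity is through an additive (Ore-type) relation rather than through a multiplicatively stable module.
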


However, \cite[Theorem~8]{k-series1} also asserts that every element of $K((t^\QQ))$ which is integral over $K((t))$ is twist-recurrent, which is false in general; here is an explicit counterexample.
\begin{example} \label{exa:not TR}
Choose $\lambda \in K$ nonzero. Put
\[
y = \sum_{m=1}^\infty \lambda^{p^{-1}+\cdots+p^{-m}} t^{-p^{-1}-p^{-1-m}}, \qquad
x = \sum_{m,n=1}^\infty \lambda^{p^{-1-n}+\cdots+p^{-m-n}} 
t^{-p^{-1-n}-p^{-1-m-n}},
\]
so that $t y^p - \lambda t^{1/p} y = \lambda t^{-1/p}$, $x^p - x = y$.
Note that $x$ satisfies Definition~\ref{D:twist-recurrent}(a) for the parameters $(a,b,c) = (1,0,2)$. Write $x = \sum_i x_i t^i$ and let $f: T_c \to K$ be the function given by $f(z) = x_{z}$. 
For each $j \geq 2$, consider the sequence as in \eqref{eq:TR sequences} for
\[
b_1 = \cdots = b_{j-2} = 0, \quad b_{j-1} = b_j = 1, \quad b_{j+1} = b_{j+2} =  \cdots = 0;
\]
this sequence satisfies
\[
c_n = \lambda^{p^{1-j}+\cdots+p^{1-j-n}}. 
\]
This sequence satisfies the LRR \eqref{eq:LRR} if and only if
\[
d_0 + d_1 \lambda^{p^{2-j}} + d_2 (\lambda^{p^{2-j}})^{1+p} + \cdots + d_k (\lambda^{p^{2-j}})^{1+p+\cdots+p^{k-1}} = 0.
\]
However, if $\lambda$ is not integral over $\FF_p$, then $\lambda^{p^{2-j}}$ takes infinitely many values, only finitely many of which can be roots of the polynomial 
\[
P(T) = d_0 + d_1 T + d_2 T^{1+p} + \dots + d_k T^{1+p+\cdots+p^{k-1}}.
\]
Consequently, $f$ is not twist-recurrent, and thus $x$ cannot be twist-recurrent.
\end{example}

We now isolate the error in the proof of \cite[Theorem~8]{k-series1} exposed by Example~\ref{exa:not TR}.
\begin{remark} \label{R:not TR}
The third step in the proof of  \cite[Theorem~8]{k-series1} is to show that if
$x = \sum_i x_i t^i,y = \sum_i y_i t^i \in K((t^{\QQ}))$ are such that $x^p - x = y$ and $y$ is twist-recurrent, then $x$ is twist-recurrent. It suffices to treat the cases where $y$ is supported either on $(-\infty, 0) \cap S_{a,b,c}$ or on $(0, \infty) \cap S_{a,b,c}$ for some $a,b,c$.

In case $y$ is supported on $(-\infty, 0) \cap S_{a,b,c}$, then
\[
x = \sum_i x_i t^i = \sum_i \sum_{n=1}^\infty y_i^{1/p^n} t^{i/p^n} = \sum_i t^i \sum_n y_{ip^n}^{1/p^n}
\]
is supported on $S_{a,b,b+c}$. 
We must show that there exists a single LRR which is satisfied by every sequence of the form
\[
c_n = x_{m-b_1 p^{-1} - \cdots - b_{j-1} p^{-(j-1)} - p^{-n}(b_j p^{-j} + \cdots)}
\qquad (n=0,1,\dots)
\]
for $j \geq 0$, $-b \leq m \leq 0$, $b_i \in \{0,\dots,p-1\}$, and $\sum_i b_i \leq c$. 
If $m=j=0$, then
\[
c_{n+1}^p - c_n = y_{-b_1p^{-1} -\cdots - b_{j-1} p^{-(j-1)} - p^{-n}(b_j p^{-j} + \cdots)};
\]
if $\{c_{n+1}^p - c_n\}$ satisfies an LRR with coefficients $d_0,\dots,d_k$, then $\{c_n\}$ satisfies an LRR with coefficients $-d_0, d_0-d_1,\dots,d_{k-1}-d_{k}$. (The last coefficient is misprinted as $d_k - d_{k-1}$ in \cite{k-series1}.)
By contrast, if $m<0$ or $j>0$, then $\{c_n\}$ is the sum of a bounded number of sequences,
each obtained from a sequence satisfying a fixed LRR by taking $p^k$-th roots for some $k$.
One then needs to apply \cite[Corollary~5]{k-series1} to deduce that $\{c_n\}$ satisfies a fixed LRR, but this is only valid if the possible values of $k$ are uniformly bounded over all possible sequences, and this is precisely what fails to occur in Example~\ref{exa:not TR}.
\end{remark}

\begin{remark} \label{R:not TR2}
Although this is not made apparent by Example~\ref{exa:not TR},
the third step in the proof of  \cite[Theorem~8]{k-series1} 
contains a similar error in the case where $y$ is supported on $(0, \infty) \cap S_{a,b,c}$. In addition, the argument given is not sufficient to confirm
that $x$ satisfies condition (c) of Definition~\ref{D:twist-recurrent}.
\end{remark}

\begin{remark}
The statement of \cite[Theorem~8]{k-series1} is correct as written in the case $K = \overline{\FF}_p$. This can be seen either by inspecting the proof carefully to see that
the errors described above do not affect this case, or by appealing to those results of
\cite{k-series-automata} which are derived independently of \cite{k-series1}
(see Remark~\ref{R:fix automata}).
In this paper, we instead deduce \cite[Theorem~8]{k-series1} in the case $K = \overline{\FF}_p$ as a corollary of Theorem~\ref{T:main}, which is valid for all $K$; see Theorem~\ref{T:finite field}.
\end{remark}

\section{Regular languages and functions}

We next briefly introduce the related concepts of \emph{finite automata} and \emph{regular languages}. These concepts are both used in \cite{k-series-automata} to study generalized power series over the algebraic closure of a finite field, although they play a somewhat less central role here.

\begin{defn}
For $\Sigma$ a set (which we regard as an \emph{alphabet}), let $\Sigma^*$ be the set of finite sequences taking values in $\Sigma$ (which we regard as \emph{words} on the alphabet $\Sigma$). We view $\Sigma^*$ as a monoid for the concatenation operation, which we denote by juxtaposition (i.e., $xy$ is the concatenation of $x$ and $y$).
Let $\len: \Sigma^* \to \ZZ$ be the length function, so that
\[
\len(a_1 \cdots a_n) = n \qquad (a_1,\dots,a_n \in \Sigma).
\]
Let $\rev: \Sigma^* \to \Sigma^*$ be the reversal operator on strings, so that
\[
\rev(a_1\cdots a_n) = a_n \cdots a_1 \qquad (a_1,\dots,a_n \in \Sigma).
\]
\end{defn}

\begin{defn}
Let $\Sigma$ be a finite set and let $f: \Sigma^* \to \Delta$ be any function.
Define the \emph{prefix equivalence relation} $\sim_{f,\pre}$ and the \emph{postfix equivalence relation} $\sim_{f,\post}$ on $\Sigma^*$ by the formulas
\begin{gather*}
w \sim_{f,\pre} w' \Longleftrightarrow f(wx) = f(w'x) \mbox{ for all $x \in \Sigma^*$};  \\
w \sim_{f,\post} w' \Longleftrightarrow f(xw) = f(xw') \mbox{ for all $x \in \Sigma^*$}.\end{gather*}
Let $\Pre_f$ and $\Post_f$ be the sets of equivalence classes under $\sim_{f,\pre}$
and $\sim_{f,\post}$. We get well-defined concatenation maps
$\Pre_f \times \Sigma^* \to \Pre_f$, $\Sigma^* \times \Post_f \to \Post_f$.
Moreover, $f$ induces a map $f: \Pre_f \times \Post_f \to \Delta$.

We say that $f$ is a \emph{finite-state function} if $\Pre_f$ is finite; this forces $f$ to have finite image. By Lemma~\ref{L:rev} below, it is equivalent to require $\Post_f$ to be finite.
\end{defn}

\begin{defn} \label{D:regular language}
For $L \subseteq \Sigma^*$ (viewed as a \emph{language} on the alphabet $\Sigma$),
we say that $L$ is \emph{regular} if its characteristic function is a finite-state function. 

For $P$ a finite partition of a regular language $L$, we say that $P$ is \emph{regular} if 
the projection map $f_P: \Sigma^* \to P \cup \{\Sigma^* \setminus L\}$ is a finite-state function;
this happens if and only if each part of $P$ is itself a regular language
\cite[Theorems~4.3.1 and~4.3.2]{allouche-shallit}.
In this case, we write $\Pre_P, \Post_P$ instead of $\Pre_{f_P}, \Post_{f_P}$;
these are themselves regular partitions of $\Sigma^*$.

%
We formally extend the operator $\rev$ on strings to languages and regular partitions:
\[
\rev(L) = \{\rev(s): s \in L\}, \qquad \rev(P) = \{\rev(q): q \in P\}.
\]
By Lemma~\ref{L:rev} below, $\rev(P)$ is a regular partition if $P$ is;
note that $\rev(\Pre_P) = \Post_{\rev(P)}$ and $\rev(\Post_P) = \Pre_{\rev(P)}$.

\end{defn}

\begin{lemma} \label{L:rev}
Let $\Sigma$ be a finite alphabet.
\begin{enumerate}
\item[(a)]
A language $L \subseteq \Sigma^*$ is regular if and only if $\rev(L)$ is.
\item[(b)]
A function $f: \Sigma^* \to \Delta$ is a finite-state function  if and only
if $f \circ \rev$ is.
\end{enumerate}
\end{lemma}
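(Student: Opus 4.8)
The plan is to prove Lemma~\ref{L:rev} by exhibiting, for each direction, an explicit automaton (or an explicit description of the prefix/postfix classes) obtained from the given one by reversing the roles of transitions. Part (a) and part (b) are really the same statement at two levels of generality — a language is just a function to a two-element set — so I would actually prove the functional statement (b) first, in a form that makes the partition statement used in Definition~\ref{D:regular language} immediate, and then deduce (a) as the special case $\Delta = \{0,1\}$.

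First I would make the bookkeeping explicit. For $f \colon \Sigma^* \to \Delta$, let $g = f \circ \rev$. The key observation is the symmetry $g(xw) = f(\rev(w)\rev(x))$, which immediately gives $w \sim_{g,\post} w' \iff \rev(w) \sim_{f,\pre} \rev(w')$, and dually $w \sim_{g,\pre} w' \iff \rev(w) \sim_{f,\post} \rev(w')$. Hence $\rev$ induces bijections $\Post_g \cong \Pre_f$ and $\Pre_g \cong \Post_f$. In particular $\Pre_g$ is finite iff $\Post_f$ is finite. So the content of (b) reduces to the claim that $\Pre_f$ finite $\iff$ $\Post_f$ finite, i.e.\ that the notion of finite-state function is left-right symmetric. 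This is the one genuine assertion to prove; everything else is the formal translation just described, together with the corresponding statements $\rev(\Pre_P) = \Post_{\rev(P)}$ and $\rev(\Post_P) = \Pre_{\rev(P)}$ quoted in Definition~\ref{D:regular language}.

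For the symmetry of finiteness, I would use the standard determinize-the-reverse argument from automata theory, phrased in the Myhill--Nerode language. Suppose $\Pre_f$ is finite, say $|\Pre_f| = N$, with the induced right action of $\Sigma^*$ on the finite set $\Pre_f$ and the induced value map $\bar f \colon \Pre_f \times \Post_f \to \Delta$ (equivalently, since $\Pre_f$ is finite, a map $\Pre_f \times \Sigma^* \to \Delta$, $(\,\overline{w},\,x\,) \mapsto f(wx)$). For $w \in \Sigma^*$ define $\Phi(w) \colon \Pre_f \to \Delta$ by $\Phi(w)(\overline{u}) = f(uw)$; this is a function from an $N$-element set to a finite set, so it takes at most $|\Delta|^N$ values. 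I claim $\Phi(w) = \Phi(w') \Rightarrow w \sim_{f,\post} w'$: indeed for any $x$, $f(xw)$ depends only on the class $\overline{x} \in \Pre_f$ and on $\Phi(w)$, because $f(xw) = \Phi(w)(\overline{x})$ — wait, more carefully, $f(xw) = \bar f(\overline{x}, \overline{w}) $ and $\Phi(w)(\overline{x}) = f(xw)$, so $\Phi(w) = \Phi(w')$ forces $f(xw) = f(xw')$ for all $x$, i.e.\ $w \sim_{f,\post} w'$. Hence $|\Post_f| \le |\Delta|^{|\Pre_f|} < \infty$. By symmetry (swapping prefix and postfix), $\Post_f$ finite implies $\Pre_f$ finite as well.

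With that in hand, assembly is routine: (b) follows because $g = f\circ\rev$ has $\Pre_g$ finite iff $\Post_f$ finite iff $\Pre_f$ finite; (a) is the case $\Delta = \{0,1\}$ applied to the characteristic function, noting $\chi_{\rev(L)} = \chi_L \circ \rev$; and the remarks that $\rev(P)$ is a regular partition and $\rev(\Pre_P) = \Post_{\rev(P)}$, $\rev(\Post_P) = \Pre_{\rev(P)}$ fall out of the same $\rev$-conjugation identity applied to $f_P$, since $f_{\rev(P)} = f_P \circ \rev$ up to identifying the label sets via $\rev$. The main obstacle — really the only non-formal point — is the counting step showing $|\Post_f| \le |\Delta|^{|\Pre_f|}$; it is the Myhill--Nerode/subset-construction bound, but it must be stated in the coordinate-free ``induced map on equivalence classes'' language the paper has set up rather than in terms of an ambient automaton. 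Everything else is unwinding the definition of $\rev$.
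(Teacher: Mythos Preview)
Your proof is correct, but the paper takes the opposite logical order: it simply cites \cite[Corollary~4.3.5]{allouche-shallit} for part~(a) as a black box, and then derives (b) from (a) via the criterion of Definition~\ref{D:regular language} (a function with finite image is finite-state iff each of its level sets is a regular language; applying (a) to each level set and noting $(f\circ\rev)^{-1}(d)=\rev(f^{-1}(d))$ gives the result). You instead prove (b) directly and self-containedly, the substantive step being the subset-construction bound $|\Post_f|\le |\mathrm{im}(f)|^{|\Pre_f|}$ showing that finiteness of $\Pre_f$ and $\Post_f$ are equivalent, and then specialize to (a). Your route is more elementary in that it needs no external citation and makes the left--right symmetry of the finite-state condition explicit (which is precisely what underlies the identities $\rev(\Pre_P)=\Post_{\rev(P)}$ used later); the paper's route is shorter on the page but offloads the content to Allouche--Shallit. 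One minor point: when you say $\Phi(w)$ lands in ``a finite set,'' you are implicitly using that a finite-state $f$ has finite image; this is stated in the paper just before the lemma, so it is legitimate, but it would be cleaner to write $\Phi(w)\colon \Pre_f \to \mathrm{im}(f)$ explicitly.
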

\begin{proof}
Part (a) is \cite[Corollary~4.3.5]{allouche-shallit}; part (b) follows from (a)
using the criterion of Definition~\ref{D:regular language}.
\end{proof}

We next recall the basic type of finite automata considered in \cite[Definition~2.1.8]{k-series-automata}.
\begin{defn}
A \emph{deterministic finite automaton with output (DFAO)}
is a tuple $M = (Q, \Sigma, \delta, q_0, \Delta, \tau)$, where
\begin{itemize}
\item $Q$ is a finite set (the \emph{states});
\item $\Sigma$ is a finite set (the \emph{alphabet});
\item $\delta$ is a function from $Q \times \Sigma$ to $Q$ 
(the \emph{transition function});
\item $q_0 \in Q$ is a state (the \emph{initial state});
\item $\Delta$ is a set (the \emph{output alphabet});
\item $\tau$ is a function from $Q$ to $\Delta$ (the \emph{output function}).
\end{itemize}
The function $\delta$ formally extends to a function $\delta^*: 
Q \times \Sigma^*
\to Q$ by the rules 
\[
\delta^*(q, \emptyset) = q, \quad
\delta^*(q, wa) = \delta(\delta^*(q,w), a) 
\qquad (q \in Q, w \in \Sigma^*, a \in \Sigma).
\]
We then obtain a function $f_M: \Sigma^* \to \Delta$ by setting $f_M(w) = \tau(\delta^*(q_0, w))$.
\end{defn}

The relationship between finite-state functions and finite automata is provided by the Myhill-Nerode theorem.
\begin{prop}[Myhill-Nerode theorem] \label{P:Myhill-Nerode}
For $\Sigma$ a finite set, a function $f: \Sigma^* \to \Delta$ is a finite-state function
if and only if it occurs as $f_M$ for some DFAO $M$.
\end{prop}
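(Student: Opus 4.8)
The plan is to prove the two implications separately, exploiting the right action of $\Sigma^*$ on $\Pre_f$ and the induced map out of $\Pre_f$ that have already been recorded in the definitions above. Throughout, write $[w]$ for the class of $w \in \Sigma^*$ in $\Pre_f$.

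First I would dispatch the implication ``$f = f_M$ for some DFAO $\Longrightarrow$ $f$ is a finite-state function''. Given $M = (Q,\Sigma,\delta,q_0,\Delta,\tau)$ with $f = f_M$, note that for all $w, x \in \Sigma^*$ we have $f(wx) = \tau(\delta^*(q_0,wx)) = \tau(\delta^*(\delta^*(q_0,w),x))$, so the single state $\delta^*(q_0,w)$ determines the value $f(wx)$ for every $x$. Hence $\delta^*(q_0,w) = \delta^*(q_0,w')$ forces $w \sim_{f,\pre} w'$, so the map $w \mapsto \delta^*(q_0,w)$ induces a surjection from $\{\delta^*(q_0,w) : w \in \Sigma^*\} \subseteq Q$ onto $\Pre_f$. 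In particular $\Pre_f$ is finite, as required.

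For the converse, suppose $\Pre_f$ is finite. I would build the ``minimal DFAO'': take $Q = \Pre_f$, let $q_0 = [\emptyset]$ be the class of the empty word, let $\delta \colon \Pre_f \times \Sigma \to \Pre_f$ be the restriction of the concatenation map $\Pre_f \times \Sigma^* \to \Pre_f$ already constructed (so $\delta([w],a) = [wa]$), retain the same output alphabet $\Delta$, and define $\tau([w]) = f(w)$. The function $\tau$ is well-defined: taking $x = \emptyset$ in the definition of $\sim_{f,\pre}$ shows $f(w) = f(w')$ whenever $w \sim_{f,\pre} w'$ (equivalently, $\tau$ is the composite of the induced map $f \colon \Pre_f \times \Post_f \to \Delta$ with the class of $\emptyset$ in $\Post_f$). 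A routine induction on $\len(w)$ gives $\delta^*(q_0,w) = [w]$, and therefore $f_M(w) = \tau(\delta^*(q_0,w)) = f(w)$ for all $w$. Finally, invoking Lemma~\ref{L:rev} lets one replace ``$\Pre_f$ finite'' by ``$\Post_f$ finite'' in the hypothesis without changing the conclusion.

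There is no genuine obstacle here: every well-definedness assertion needed---the $\Sigma^*$-action on $\Pre_f$ and the map $f \colon \Pre_f \times \Post_f \to \Delta$---has been set up in the preceding definitions, so the argument amounts to assembling these pieces. The only point I would state with a little care is that, in the first direction, it is the set of \emph{reachable} states $\{\delta^*(q_0,w):w\in\Sigma^*\}$, and not necessarily all of $Q$, that surjects onto $\Pre_f$; this is immaterial since only the cardinality bound $\#\Pre_f \le \#Q$ is used.
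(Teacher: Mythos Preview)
Your proposal is correct and follows essentially the same approach as the paper: the paper's two-sentence sketch notes that $\delta^*$ yields a map bounding $\#\Pre_{f_M}$ by $\#Q$, and that conversely one builds a DFAO on the state set $Q = \Pre_f$ (with a citation to Allouche--Shallit for details). Your version is simply a fully written-out form of that sketch, and your care about reachable states is, if anything, more precise than the paper's phrasing.
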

\begin{proof}
Note that $\delta^*$ induces an injective map $\Pre_{f_M} \to Q$, so $f_M$ is a finite-state function; conversely, if $f$ is a finite-state function, there is a DFAO $M$ with $\Pre_{f_M} = Q = \Pre_f$. For more details, see \cite[Theorem~4.1.8]{allouche-shallit}.
\end{proof}

The following definition captures certain basic transformations of regular languages;
see Example~\ref{exa:p-adic arithmetic} for a key example.
\begin{defn}
Let $L$ be a regular language on the alphabet $\Sigma$. Let $P$ be a regular partition of $L$.
For $\tau: \Pre_P \times \Sigma \to \Delta$
a function, we define the function $f_\tau: \Sigma^* \to \Delta^*$ by the formula
\[
f_\tau(\emptyset) = \emptyset,\quad  f_\tau(wa) = f_\tau(w) \tau(w, a) \qquad (w \in \Sigma^*, a \in \Sigma).
\]
A function occurring as the restriction of $f_\tau$ to $L$ for some $P, \tau$ is called a \emph{uniform transducer} over $L$. 

For example, any map from $\Sigma$ to another alphabet $\Delta$ defines (using the trivial partition) a uniform transducer over $\Sigma^*$ whose image is again a regular language \cite[Theorem~4.3.6]{allouche-shallit}; such a map is called a \emph{homomorphism} of regular languages.
\end{defn}

\begin{lemma} \label{L:quantifier elimination}
Let $\Sigma, \Sigma'$ be two finite alphabets.
Let $\pi: \Sigma \times \Sigma' \to \Sigma$ be the projection map.
Let $f_\pi: (\Sigma \times \Sigma')^* \to \Sigma^*$ be the associated uniform transducer.
Then for any regular language $L$ on the alphabet $\Sigma \times \Sigma'$,
\[
\{s \in \Sigma^*: f_\pi^{-1}(\{s\}) \subseteq L\}
\]
is a regular language on the alphabet $\Sigma$.
\end{lemma}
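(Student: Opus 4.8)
The plan is to exploit the fact that $f_\pi$ is length-preserving: a word $w \in (\Sigma \times \Sigma')^*$ of the form $(s_1,a_1)\cdots(s_n,a_n)$ satisfies $f_\pi(w) = s_1\cdots s_n$, so for $s = s_1\cdots s_n \in \Sigma^*$ the fiber $f_\pi^{-1}(\{s\})$ is exactly the finite set of words obtained from $s$ by decorating each position $i$ with an arbitrary letter $a_i \in \Sigma'$. The defining condition of our language is thus a condition \emph{universally} quantified over all such decorations; this is what distinguishes it from the image $f_\pi(L)$, which is regular essentially by inspection, and it is the reason the argument must route through complementation (equivalently, through the determinism built into the notion of a finite-state function).

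I would carry this out via a subset construction. First, invoke the Myhill--Nerode theorem (Proposition~\ref{P:Myhill-Nerode}) to fix a DFAO $M = (Q, \Sigma \times \Sigma', \delta, q_0, \{0,1\}, \tau)$ with $f_M$ equal to the characteristic function of $L$. Next, build a DFAO $M'$ over the alphabet $\Sigma$ with state set the power set of $Q$, initial state $\{q_0\}$, transition function
\[
\delta'(T, s) = \{\delta(q,(s,a)) : q \in T,\ a \in \Sigma'\} \qquad (T \subseteq Q,\ s \in \Sigma),
\]
and output function sending $T$ to $1$ if $\tau(q) = 1$ for all $q \in T$ and to $0$ otherwise. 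A straightforward induction on $\len(s)$ then shows $(\delta')^*(\{q_0\}, s) = \{\delta^*(q_0, w) : w \in f_\pi^{-1}(\{s\})\}$, so that $f_{M'}(s) = 1$ precisely when every word in $f_\pi^{-1}(\{s\})$ is accepted by $M$, i.e.\ precisely when $f_\pi^{-1}(\{s\}) \subseteq L$. Hence $f_{M'}$ is the characteristic function of the language in question, and another application of Proposition~\ref{P:Myhill-Nerode} together with Definition~\ref{D:regular language} shows that language is regular.

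An equivalent but more compact route avoids automata altogether: the set in question equals $\Sigma^* \setminus f_\pi\big((\Sigma \times \Sigma')^* \setminus L\big)$, since $f_\pi^{-1}(\{s\}) \subseteq L$ holds exactly when no word outside $L$ projects to $s$. Here $(\Sigma \times \Sigma')^* \setminus L$ is regular because taking complements does not change the prefix equivalence classes of the characteristic function; its image under the homomorphism $f_\pi$ of regular languages is regular by \cite[Theorem~4.3.6]{allouche-shallit}; and the complement of that image in $\Sigma^*$ is again regular for the same complementation reason. In either approach the only thing requiring care is bookkeeping---keeping the quantifier over $\Sigma'$-decorations universal rather than existential---so I do not anticipate a substantive obstacle; the content of the lemma is really just the standard closure of regular languages under complement, reorganized.
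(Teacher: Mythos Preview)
Your proposal is correct, and in fact your second route---writing the set as $\Sigma^* \setminus f_\pi\big((\Sigma \times \Sigma')^* \setminus L\big)$ and invoking closure of regular languages under complement and homomorphic image---is exactly the paper's proof. Your first route via an explicit subset construction is also valid and amounts to unwinding the same closure properties by hand; it buys nothing extra here, so the compact version suffices.
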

\begin{proof}
Let $L'$ be the complement of $L$; then $L'$ is again a regular language. 
By the previous discussion, the image $f_\pi(L')$ is a regular language;
as then is its complement. However, the latter is exactly the set we are considering.
\end{proof}

%

\section{Regular languages and generalized power series}

We now make the link between regular languages and generalized power series, extending the case $K = \overline{\FF}_p$ treated in \cite{k-series-automata}. We begin by fixing some conventions regarding base-$p$ representations of rational numbers which are slightly nonstandard, but consistent with \cite{k-series-automata}.
\begin{defn}
Let $\Sigma_p$ denote the alphabet $\{0,\dots,p-1\}$.
Let $L_p^0 \subset \Sigma_p^*$ be the set of strings not beginning with $0$;
this is evidently a regular language.
For $s = s_0 \cdots s_n \in L_p^0$, define
\[
\left| s \right| = \sum_{j=0}^{n} s_j p^{n-j};
\]
this defines a bijection $\left| \bullet \right| : L_p^0 \to \ZZ_{\geq 0}$.
We view $s$ as our preferred base-$p$ representation of the nonnegative integer $\left| s \right|$; in particular, we represent $0$ by the empty string.

Let $L_p \subset (\Sigma_p \cup \{.\})^*$ be the set of the strings of the form $s_1.s_2$ with $s_1 \in L_p^0, s_2 \in \rev(L_p^0)$; this is again a regular language.
For $s = s_0 \cdots s_n \in L_p$, let $i$ be the unique index for which $s_i = .$, and define
\[
\left\| s \right\| = \sum_{j=0}^{i-1} s_j p^{i-j-1} + \sum_{j=i+1}^n s_j p^{i-j};
\]
this defines a bijection $\left\|\bullet \right\|$ from $L_p$ to $\ZZ[p^{-1}]_{\geq 0}$ (the set of nonnegative rational numbers with $p$-power denominators). We view $s$ as  our preferred base-$p$ representation of $\left\| \bullet \right\|$; beware that for nonnegative integers, we prefer different representations depending on whether we are viewing them as elements of $\ZZ_{\geq 0}$ (in which case we omit the trailing radix point) or $\ZZ[p^{-1}]_{\geq 0}$ (in which case we include the radix point).
\end{defn}

\begin{example} \label{exa:Dedekind cut}
For $r \in \RR$, the set of $s \in L_p$ such that $\left\|s\right\| < r$ is regular if and only if $r \in \QQ$.
\end{example}

\begin{example} \label{exa:p-adic arithmetic}
For $a \in \ZZ_{>0}$, $b \in \ZZ_{\geq 0}$, the function $i \mapsto ai+b$ on $\ZZ[p^{-1}]_{\geq 0}$ can be computed on base-$p$ expansions from right to left with only a fixed finite amount of auxiliary memory, but not from left to right. In other words, the function
\[
f: L_p \to L_p,
\qquad
\left\| \rev(f(s)) \right\| = a \left\| \rev(s) \right\| + b \qquad (s \in L_p)
\]
is a uniform transducer, but $\rev \circ f \circ \rev$ is not.
\end{example}

In this language, the theorem of Christol is the following. (For a rederivation of this theorem using the results of this paper, see Remark~\ref{R:recover Christol}.)
\begin{theorem}[Christol] \label{T:christol}
For $q$ a power of $p$, an element $x = \sum_{i=0}^\infty x_i t^i \in \FF_q \llbracket t \rrbracket$ is algebraic over $\FF_q(t)$ if and only the function $L_p^0 \to \FF_q$
taking $s$ to $x_{|s|}$ is a finite-state function.
\end{theorem}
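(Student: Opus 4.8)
The plan is to prove Christol's theorem (Theorem~\ref{T:christol}) by reducing algebraicity over $\FF_q(t)$ to a finite-dimensionality statement that can be checked on base-$p$ expansions of coefficient indices. The key structural fact is that $\FF_q\llbracket t\rrbracket$ carries the Frobenius operator $\varphi\colon x\mapsto x^p$, and that for $x=\sum_i x_i t^i$ the ``Cartier-type'' operators $\Lambda_j$ ($j\in\{0,\dots,p-1\}$) defined by
\[
\Lambda_j\Bigl(\sum_i x_i t^i\Bigr) = \sum_i x_{pi+j}^{1/p}\, t^i
\]
satisfy $x = \sum_{j=0}^{p-1} t^j\,(\Lambda_j x)^p$ and act on the finite-dimensional $\FF_q(t)$-vector space generated by the conjugates of an algebraic element. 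Concretely, first I would record the elementary closure properties: the set of finite-state functions $L_p^0\to\FF_q$ is stable under addition, under $\FF_q$-linear combinations, and under the ``digit-stripping'' maps induced by $\Lambda_j$ (removing the last base-$p$ digit is a finite-state operation by Lemma~\ref{L:rev} applied to the left-to-right reading, or directly); conversely a function is finite-state iff its orbit under all the digit-stripping maps is finite (this is exactly the Myhill-Nerode criterion, Proposition~\ref{P:Myhill-Nerode}, with $Q=\Pre_f$).

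For the forward direction (algebraic $\Rightarrow$ finite-state), I would argue as follows. If $x$ is algebraic over $\FF_q(t)$, then $\FF_q(t)[x^{1/p^\infty}]$—the field generated by all $p$-power roots of $x$, or more carefully the $\FF_q(t^{1/p^\infty})$-span of $x$—is still finite over $\FF_q(t^{1/p^\infty})$, hence the $\FF_q(t)$-vector space $V$ spanned by $x$ and its iterated images under the $\Lambda_j$ is finite-dimensional (each $\Lambda_j$ lowers a suitable valuation/degree bound, so the span stabilizes). Clearing denominators, one gets a finite-dimensional $\FF_q[t]$-submodule $M$, stable under the $\Lambda_j$, containing $x$. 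Now reading a word $s\in L_p^0$ from the \emph{left} (most significant digit first) and applying the corresponding composite of $\Lambda_j$'s sends a basis of $M$ to a bounded family of elements of $M\otimes_{\FF_q[t]}\FF_q[t,t^{-1}]$; the coefficient $x_{|s|}$ is recovered as (a $p$-power-twisted version of) the constant term of the resulting element. Since $M$ has finitely many generators and the matrices of the $\Lambda_j$ have bounded-degree polynomial entries, the reachable set of ``state vectors'' is finite, and the constant-term-extraction map is the output function: this exhibits a DFAO computing $s\mapsto x_{|s|}$, so by Myhill-Nerode the function is finite-state. The one point needing care is the $p$-power twisting from the $1/p$ in the definition of $\Lambda_j$—I would handle it by working over the fixed finite field $\FF_q$, where $x\mapsto x^{1/p}$ is just a bijection of the (finite) scalar set and therefore does not affect finiteness of the state space.

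For the converse (finite-state $\Rightarrow$ algebraic), suppose $f(s)=x_{|s|}$ is finite-state, so it is computed by a DFAO with state set $Q$. For each state $q\in Q$ let $x^{(q)}=\sum_i \tau(\delta^*(q_0,\cdot))\cdots$ be the generalized power series whose coefficient of $t^i$ is the output reached from $q$ after reading the base-$p$ expansion of $i$; then $x=x^{(q_0)}$, there are finitely many $x^{(q)}$, and the transition structure gives for each $q$ an identity $x^{(q)}=\sum_{j=0}^{p-1} t^j\,(x^{(\delta(q,j))})^{p}$ up to the scalar Frobenius twist (again harmless over $\FF_q$). Thus the $\FF_q$-span (or $\FF_q(t)$-span) of $\{x^{(q)}\}$ is a finite-dimensional $\FF_q(t)$-vector space stable under $\varphi$; equivalently, $x^{p^{|Q|}}$ lies in the $\FF_q(t)$-span of $x,x^p,\dots,x^{p^{|Q|-1}}$, which is precisely an algebraic (indeed a $p$-polynomial, additive) relation over $\FF_q(t)$. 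Hence $x$ is algebraic over $\FF_q(t)$.

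I expect the main obstacle to be bookkeeping rather than conceptual: making the ``state = bounded-degree vector in a fixed finite-dimensional module'' correspondence precise in the forward direction, in particular pinning down a degree bound that is preserved by all the $\Lambda_j$ simultaneously (so that the reachable set is genuinely finite and not merely finitely generated), and tracking the Frobenius twists $x\mapsto x^{1/p}$ consistently so that the output alphabet stays inside the fixed finite field $\FF_q$. Once those are set up, both implications are short. I would also remark that this argument is exactly the $a=1$, $b=0$, integer-support shadow of the more general Theorem~\ref{T:integral closure of rational field}, and that replacing $L_p^0$ by $L_p$ and the $\Lambda_j$'s by their analogues on $\ZZ[p^{-1}]_{\ge 0}$ (using Example~\ref{exa:p-adic arithmetic}) is what will later upgrade Christol's theorem to the generalized-power-series setting.
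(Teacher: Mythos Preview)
The paper does not give a self-contained proof of Theorem~\ref{T:christol}; it simply cites the original sources (Christol, Christol--Kamae--Mend\`es France--Rauzy, and Allouche--Shallit). So there is no detailed argument in the paper to compare yours against at this point. What the paper \emph{does} do later (Remark~\ref{R:recover Christol}) is rederive Christol's theorem as the $\FF_q$-case of Theorem~\ref{T:gen Christol} together with Lemma~\ref{L:compatibility}; the forward direction there is handled by the curve argument in the proof of Theorem~\ref{T:gen Christol}, bounding pole orders on a smooth projective model to show that the $K$-span (not the $K(t)$-span) of the decimation orbit is finite-dimensional.

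Your sketch is the classical decimation/Cartier-operator proof and is correct in outline; the converse direction is fine. But in the forward direction there is a real gap, not just bookkeeping. You first obtain that the $\FF_q(t)$-span $V$ of the $\Lambda_j$-orbit is finite-dimensional, then ``clear denominators'' to an $\FF_q[t]$-module $M$ and assert that the reachable state vectors form a finite set because the matrices have bounded-degree entries. The problem is that the $\Lambda_j$ are $\varphi^{-1}$-semilinear over $\FF_q(t)$ (since $\Lambda_j(t^p y)=t\,\Lambda_j(y)$), so the existence of a $\Lambda_j$-stable $\FF_q[t]$-lattice, and more to the point a uniform degree bound preserved by all the $\Lambda_j$ simultaneously, is not automatic from finite $\FF_q(t)$-dimension alone; it is precisely the substantive step. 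The standard ways to supply it are either (i) the explicit Ore-relation manipulation (write $a_0 x = -\sum_{i\ge 1} a_i x^{p^i}$ with $a_i\in\FF_q[t]$, and check by hand that a suitable $\FF_q$-span of monomials $t^j x^{p^i}$ with bounded $j$ is $\Lambda$-stable), or (ii) the geometric bound via \eqref{eq:decimate pole} on a smooth projective curve, as in the proof of Theorem~\ref{T:gen Christol}. Either one is short once stated, but you should commit to one rather than defer it; without it, the ``reachable set is finite'' claim is unproved.
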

\begin{proof}
See \cite[Th\'eor\`eme~1]{christol}, \cite[Th\'eor\`eme~1]{ckmr}, or \cite[Theorem~12.2.5]{allouche-shallit}.
\end{proof}

This theorem is generalized by \cite[Theorem~4.1.3]{k-series-automata} as follows.
For a rederivation of this result using the results of this paper, see Remark~\ref{R:recover automata1}.
\begin{theorem} \label{T:automata1}
For $q$ a power of $p$, an element $x = \sum_{i} x_i t^i \in \FF_q ((t^{\QQ}))$
is algebraic over $\FF_q(t)$ if and only it satisfies the following conditions.
\begin{enumerate}
\item[(a)]
For some integers $a,b$ with $a>0$, the support of $x$ is contained in $\{(i-b)/a: i \in \ZZ[p^{-1}]_{\geq 0}\}$.
\item[(b)]
For some $a,b$ as in (a), the function $L_p \to \FF_q$ taking $s$ to
 $x_{a\|s\|+b}$ is a finite-state function. (This then holds for any such $a,b$.)
\end{enumerate}
\end{theorem}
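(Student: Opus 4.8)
The plan is to reduce the statement to Christol's theorem (Theorem~\ref{T:christol}) by a sequence of reductions that separate the "finite part" of the support (nonnegative integer exponents) from the fractional part, and then handle the fractional part by a substitution that clears denominators. First I would dispose of the necessity of condition~(a): if $x$ is algebraic over $\FF_q(t)$, then it satisfies a polynomial equation, and the standard valuation-theoretic argument (as in \cite{k-series1}) shows the support of $x$ is contained in a finitely generated subgroup of $\QQ$ intersected with a well-ordered set; since the only primes that can appear in denominators are $p$ (because $\FF_q((t^\QQ))$ has no element satisfying $x^\ell = t$ for $\ell \ne p$ coprime to $p$ only in the tame sense — more precisely, the ramification is purely wild), one gets that the support lies in $a^{-1}(\ZZ[p^{-1}] - b)$ for suitable $a, b$. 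This is essentially the support restriction already recorded via $S_{a,b,c}$ in Definition~\ref{D:sabc}, and I would cite the corresponding argument from \cite{k-series1} rather than reprove it.

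Next, assuming~(a), I would make the substitution $t \mapsto t^a$ and multiply by $t^b$ to reduce to the case $a = 1$, $b = 0$, i.e. $x = \sum_i x_i t^i$ with support in $\ZZ[p^{-1}]_{\geq 0}$; one checks this substitution preserves both algebraicity over the rational function field (it is a finite extension) and the finite-state property of the coefficient function (the map on base-$p$ representations induced by $i \mapsto (i-b)/a$, read right-to-left, is a uniform transducer by Example~\ref{exa:p-adic arithmetic}, and uniform transducers preserve finite-stateness by the Myhill–Nerode machinery of the previous section). The remaining content is then: for $x = \sum_{i \in \ZZ[p^{-1}]_{\geq 0}} x_i t^i$, $x$ is algebraic over $\FF_q(t)$ iff the function $L_p \to \FF_q$, $s \mapsto x_{\|s\|}$, is finite-state.

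For this core case I would split $x = x_{\mathrm{int}} + x_{\mathrm{frac}}$, where $x_{\mathrm{int}}$ collects the terms with $i \in \ZZ_{\geq 0}$ and $x_{\mathrm{frac}}$ the rest. The integer part is handled directly by Christol's theorem (Theorem~\ref{T:christol}), after checking that the restriction of a finite-state function on $L_p$ to the sublanguage $L_p^0$ of strings with empty fractional part is again finite-state, and conversely that a finite-state function on $L_p^0$ extends. For the fractional part, the key observation is that $x_{\mathrm{frac}}$ lies in the maximal tamely-and-unramified-free extension generated by adjoining $p^n$-th roots of $t$; concretely, writing $i = j + p^{-n}$-style decompositions, one sees that $x$ algebraic forces, for each fixed "tail pattern", a Christol-type condition after a further substitution $t \mapsto t^{1/p^n}$ — but since these substitutions are Frobenius twists $\FF_q((t^{1/p^n})) \cong \FF_q((t))$ via $t \mapsto t^p$, algebraicity is preserved and Christol applies uniformly. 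The finite-state condition on $L_p$ precisely packages the statement that only finitely many such patterns occur and each is itself finite-state, which is where the single automaton with its radix-point state does the bookkeeping.

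The main obstacle I expect is the fractional-part direction of the "if" implication: given that $s \mapsto x_{\|s\|}$ is finite-state, one must produce an explicit algebraic equation for $x$, and the naive approach (mimicking the Christol construction of a finite-dimensional $\FF_q(t)$-vector space stable under the relevant operations) has to be adapted so that the operations include both the Cartier/Frobenius-section maps and the "shift the radix point" maps; showing this combined module is finite-dimensional over $\FF_q(t)$ — equivalently, that $x$ together with its finitely many "sections" $\sum_i x_{i/p^n + c} t^i$ spans a finite-dimensional space — is the real work, and it is essentially the semilinear-algebra reformulation the introduction promises. I would structure this as: (i) the state set $Q$ of a DFAO computing the coefficients is finite; (ii) each state $q$ gives a generalized power series $x^{(q)}$, still supported on $\ZZ[p^{-1}]_{\geq 0}$; (iii) the transition function expresses each $x^{(q)}$ as an $\FF_q$-linear combination of $p$-th powers of the $x^{(q')}$ times powers of $t$, plus a radix-point correction; (iv) hence the $\FF_q(t)$-span of the $x^{(q)}$ is finite-dimensional and Frobenius-stable, forcing each $x^{(q)}$, in particular $x$, to be algebraic. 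The converse direction reverses this, extracting a finite automaton from the finite-dimensional module furnished by algebraicity, with the radix-point handling making this slightly more delicate than in the classical Christol setting.
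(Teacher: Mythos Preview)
The paper does not prove Theorem~\ref{T:automata1} directly at all: it is stated as a citation to \cite[Theorem~4.1.3]{k-series-automata}, and then rederived in Remark~\ref{R:recover automata1} as an immediate consequence of the general characterization Theorem~\ref{T:integral closure of rational field} together with the compatibility Lemma~\ref{L:compatibility}. Your proposal instead attempts a self-contained direct argument, which is a legitimate ambition but differs substantially from what the paper does.

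Your ``if'' direction (finite-state $\Rightarrow$ algebraic) via the states $x^{(q)}$ in steps (i)--(iv) is essentially the argument of \cite[Proposition~5.1.2]{k-series-automata} and is sound in outline. The ``only if'' direction, however, has a genuine gap. You write that one simply ``extracts a finite automaton from the finite-dimensional module furnished by algebraicity,'' but you have not explained why such a module exists for generalized power series. In the classical Christol setting, finite-dimensionality of the span of the decimations $s_i$ comes from Riemann--Roch on a smooth projective curve (this is exactly the content of the proof of Theorem~\ref{T:gen Christol}). For a series supported on $\ZZ[p^{-1}]_{\geq 0}$ there is no curve to invoke, and the paper's substitute---Lemma~\ref{L:truncated algebraic to automatic}, a transfinite induction combined with an Artin--Schreier reduction (Lemma~\ref{L:automatic AS}) and a Krasner argument (Lemma~\ref{L:Krasner})---is real work that your sketch omits entirely. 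The phrase ``radix-point handling making this slightly more delicate'' substantially understates the difficulty.

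Two smaller issues: your justification of condition~(a) is garbled (the field $\FF_q((t^{\QQ}))$ certainly contains $\ell$-th roots of $t$ for all $\ell$; the correct argument runs through the Newton polygon and the structure of the ramification, as in \cite{k-series1} or Corollary~\ref{C:sabc}); and the decomposition $x = x_{\mathrm{int}} + x_{\mathrm{frac}}$ is not obviously useful, since neither summand need be algebraic on its own, and your steps (i)--(iv) do not in fact use it.
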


\begin{remark} \label{R:bad support}
One important distinction between Theorem~\ref{T:christol} and Theorem~\ref{T:automata1}
is that whereas any finite-state function $L_p^0 \to \FF_q$ defines a power series,  a finite-state function $L_p \to \FF_q$ typically does not define a generalized power series: the constraint of well-ordered support imposes a strong restriction on the underlying DFAO. We use one explicit consequence of this restriction in Lemma~\ref{L:accumulation point} below; see \cite[\S 7.1]{k-series-automata} for further discussion.
\end{remark}

\begin{lemma} \label{L:accumulation point}
Let $L \subseteq L_p$ be a regular language such that 
$\|L\| = \{\|s\|: s \in L\}$ is well-ordered. Then every accumulation point of $\|L\|$ belongs to $\QQ$.
\end{lemma}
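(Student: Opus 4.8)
The plan is to argue by contradiction: suppose $\alpha \in \RR \setminus \QQ$ is an accumulation point of $\|L\|$, and derive a contradiction with the well-ordering of $\|L\|$. Since $\alpha$ is an accumulation point of a well-ordered set, it must be a limit from below, so there is a strictly increasing sequence $\|s_1\| < \|s_2\| < \cdots$ in $\|L\|$ converging to $\alpha$. First I would reduce to a cleaner situation by observing that the integer parts of the $s_n$ are eventually constant (they lie in a bounded range once $\|s_n\| > \alpha - 1$), so after truncating we may assume all the $s_n$ share a common prefix before the radix point, and it suffices to understand the reversed fractional parts; equivalently, I may pass to the setting of a regular language inside $\rev(L_p^0)$ (reading the digits after the radix point) whose associated values accumulate at the irrational number $\alpha - \lfloor \alpha \rfloor$ from below.

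The heart of the argument is a pumping-type lemma applied to the DFAO recognizing $L$. Because $\alpha$ is irrational, the base-$p$ expansion $.d_1 d_2 d_3\cdots$ of its fractional part is aperiodic. For each $n$, the string $s_n$ agrees with this expansion in a longer and longer initial segment (since $\|s_n\| \to \alpha$ from below, and a number just below $\alpha$ either agrees with $\alpha$ for a long prefix or differs early and is then bounded away from $\alpha$). So I can pick $s_n$ whose fractional part begins with the true digits $d_1 \cdots d_{N}$ for $N$ as large as I like, followed by a tail. Now feed the prefix $d_1 \cdots d_N$ (reversed, in the convention of the lemma) into the automaton: for $N$ large two of the partial states coincide, say after reading the segments corresponding to positions up to $j$ and up to $k$ with $j < k$. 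Here is where irrationality is essential: if the expansion of $\alpha$ were eventually periodic the repeated block would just reconstruct $\alpha$ itself, but aperiodicity guarantees that inserting or deleting the loop produces a genuinely different, shorter string that is still accepted by the automaton and whose value is strictly between $\|s_n\|$ and $\alpha$ — in fact it is a new element of $\|L\|$ lying in $(\|s_n\|, \alpha)$ but with smaller "complexity," and iterating this pumping produces an infinite strictly decreasing sequence in $\|L\|$, contradicting well-ordering.

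I expect the main obstacle to be making the pumping step combinatorially precise while respecting the radix-point bookkeeping of $L_p$: one must be careful that the pumped string still lies in $L_p$ (it does, since inserting digits strictly after the radix point stays within $\rev(L_p^0)$ after the point and does not touch the integer part), and that the resulting value genuinely stays below $\alpha$ and above some fixed $\|s_m\|$, which requires controlling how the inserted loop interacts with the ordering on base-$p$ expansions. An alternative, perhaps cleaner route is to invoke Example~\ref{exa:Dedekind cut}: if $\alpha$ is irrational then $\{s \in L_p : \|s\| < \alpha\}$ is \emph{not} regular, whereas $L \cap \{s : \|s\| < \alpha\}$ \emph{is} regular (being the intersection of $L$ with, say, $\{s : \|s\| < \alpha'\}$ for a rational $\alpha'$ slightly above $\alpha$, once one knows $\|L\|$ has no points in a small interval $(\alpha, \alpha')$ — which follows from well-ordering and $\alpha$ being an accumulation point from below); pushing on this tension between regularity and the irrational cut, together with the well-ordering hypothesis pinning down the local structure of $\|L\|$ near $\alpha$, should yield the contradiction with less explicit automaton surgery. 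I would develop whichever of these two formulations makes the well-ordering hypothesis intervene most transparently.
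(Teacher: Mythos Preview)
Your overall strategy---feed longer and longer prefixes of the base-$p$ expansion of the putative irrational limit $\alpha$ into the DFAO, find a repeated state, and pump---is indeed the same opening move as in the paper. But the crucial step, where you assert that ``iterating this pumping produces an infinite strictly decreasing sequence in $\|L\|$,'' is not justified, and this is exactly where the real work lies. Pumping a loop in or out of some $s_n$ produces another accepted string, but you have no control over whether its value lands above or below $\alpha$, nor over how the pumped values compare to one another. In particular, your claim that the pumped value lies in $(\|s_n\|,\alpha)$ is unsupported; and even if it did, accumulating more points \emph{below} $\alpha$ is perfectly compatible with well-ordering---to get a contradiction you would need a strictly \emph{decreasing} sequence, which nothing in your sketch produces.

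The paper closes this gap by invoking a structural result (Theorem~7.1.6 of \cite{k-series-automata}): if $\|L\|$ is well-ordered, then from any state $q$ that is both reachable and co-reachable, the transition graph contains a \emph{unique} minimal directed cycle through $q$. Granting this, one picks a state $q$ visited by infinitely many prefixes of the infinite expansion of $\alpha$; uniqueness of the cycle at $q$ forces that expansion to be eventually periodic, hence $\alpha\in\QQ$. Proving that uniqueness statement is itself a nontrivial combinatorial argument (one must show that two distinct loops at $q$ let you build a descending chain, e.g.\ by comparing $w_1w_2$ with $w_2w_1$), and your sketch does not supply it. Your alternative route via Example~\ref{exa:Dedekind cut} does not close the gap either: you correctly observe that $L\cap\{s:\|s\|<\alpha\}$ is regular, but this in no way conflicts with the non-regularity of the full cut $\{s\in L_p:\|s\|<\alpha\}$, since $L$ may be a very thin subset of $L_p$; the ``tension'' you allude to is not an actual contradiction.
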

\begin{proof}
Identify the characteristic function of $L$ with $f_M: (\Sigma_p \cup \{.\})^* \to \{0,1\}$ for some DFAO $M$.
Let $s_1, s_2,\ldots \in L$ be a sequence such that the sequence $\|s_1\|, \|s_2\|, \dots$ converges to some $r \in \RR$. Then there is a unique infinite string $s$ such that the length of the maximal prefix shared by $s$ and $s_i$ tends to $\infty$ as $i \to \infty$; this string may be viewed as a (possibly nonstandard) base-$p$ expansion of $r$.
Since $M$ has only finitely many states, there must exist a state $q$ which occurs as $\delta^*(q_0, p)$ for infinitely many prefixes $p$ of $s$. However, since $\|L\|$ is well-ordered, we may apply \cite[Theorem~7.1.6]{k-series-automata} to see that the transition graph of $M$ contains a unique minimal directed cycle from $q$ to itself. It follows that $s$ is eventually periodic, so $r \in \QQ$.
\end{proof}

\begin{remark} \label{R:fix automata}
Since \cite{k-series-automata} contains arguments based on \cite[Theorem~8]{k-series1} which is now known to be incorrect, some discussion is in order regarding the status of the results of \cite{k-series-automata}. The fact that generalized power series described by automata are algebraic over $\FF_q(t)$ is established by a direct calculation
\cite[Proposition~5.1.2]{k-series-automata} which does not refer to \cite{k-series1}.
The converse implication is given two distinct proofs: one of these \cite[Proposition~5.2.7]{k-series-automata} uses \cite[Theorem~15]{k-series1}, which we will later verify is correct as stated (see Theorem~\ref{T:finite field2}); the other
\cite[Proposition~7.3.4]{k-series-automata} is logically independent of \cite{k-series1}.
Consequently, all of the results of \cite{k-series-automata} remain correct as stated and proved.
\end{remark}

\section{Composed functions over a general field}
\label{sec:composed functions}

In order to extend the notion of automaticity to arbitrary $K$, we need to incorporate some additional linear algebra into the construction; this linear algebra in some sense makes the finite automata redundant, but for various reasons (including but not limited to compatibility with \cite{k-series-automata}) it is useful to keep them present.

\begin{hypothesis}
Throughout \S\ref{sec:composed functions}, let $F$ be a field (of any characteristic), and let $\varphi: F \to F$ be an automorphism.
\end{hypothesis}

\begin{defn}
Throughout this definition, let $V$ be a finite-dimensional $F$-vector space.
Let $V^\dual = \Hom_F(V,F)$ denote the dual vector space and let $\langle \bullet, \bullet \rangle: V^\dual \times V \to F$ denote the contraction (evaluation) pairing.
Let $\End_{\ZZ}(V)$ be the set of endomorphisms of the underlying abelian group of $V$,
viewed as a monoid with respect to composition of functions (so that $g \circ f$ means apply $f$, then $g$).

We say that $g \in \End_{\ZZ}(V)$ is \emph{$\varphi$-semilinear} if for all $r \in F, \bv \in V$, we have $g(r\bv) = \varphi(r) g(\bv)$.
Let $\End_\varphi(V)$ be the subset (not a submonoid under composition) of $\End_{\ZZ}(V)$ consisting of $\varphi$-semilinear endomorphisms.
Let $T: \End_{\varphi}(V) \to \End_{\varphi^{-1}}(V^\dual)$ denote the transpose map, characterized by the identity 
\[
\langle T(f)(\bv^\dual), \bv \rangle = \varphi^{-1}(\langle \bv^\dual, f(\bv) \rangle) \qquad
(\bv \in V, \bv^\dual \in V^\dual, f \in \End_{\varphi}(V)).
\]
\end{defn}

\begin{defn}
For $G$ a monoid, let $\prod: G^* \to G$ denote the multiplication map on strings,
with $\prod \emptyset$ being the identity element of $G$.
\end{defn}

\begin{defn} \label{D:automatic1}
Let $V$ be a finite-dimensional vector space over $F$.
A function $f: \Sigma_p^* \to \End_{\ZZ}(V)$ is \emph{$\varphi$-composed} 
if there exists a function $\tau: \{0,\dots,p-1\} \to \End_{\varphi}(V)$ 
such that
\[
f(s) = \tau(a_1) \circ \cdots \circ \tau(a_n) \qquad (s = a_1 \cdots a_n \in \Sigma_p^*).
\]
In particular, $f(\emptyset) = \id_V$ and $f(s) \in \End_{\varphi^{\len(s)}}(V)$ for all $s \in \Sigma_p^*$.

More generally, we say that $f$ is \emph{$\varphi$-autocomposed} if there exist a regular partition $P$ of $\Sigma_p^*$ and a function $\tau: \Pre_P \times \{0,\dots,p-1\} \to \End_\varphi(V)$ such that
\[
f(s) = \prod f_\tau(s) \qquad (s \in \Sigma_p^*).
\]
In particular, any $\varphi$-composed function is $\varphi$-autocomposed with $P$ being the trivial partition of $\Sigma_p^*$.
\end{defn}

This last observation has a partial converse.
\begin{defn} \label{D:potentially composed}
Let $V$ be a finite-dimensional $F$-vector space. A function $f: \Sigma_p^* \to \End_{\ZZ}(V)$ is \emph{potentially $\varphi$-composed} (resp.\ \emph{potentially $\varphi$-autocomposed}) if there exist a finite-dimensional $F$-vector space $V'$, an $F$-linear injection $\iota: V \to V'$, an $F$-linear surjection $\pi: V' \to V$, and a $\varphi$-composed (resp.\ $\varphi$-autocomposed) function $f': \Sigma_p^* \to \End_{\ZZ}(V')$ such that
\begin{equation} \label{eq:potentially composed}
f(s) = \pi \circ f'(s) \circ \iota \qquad (s \in \Sigma_p^*).
\end{equation}
\end{defn}

\begin{lemma} \label{L:flatten composed}
Any potentially $\varphi$-autocomposed function is potentially $\varphi$-composed.
\end{lemma}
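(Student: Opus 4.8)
The plan is to reduce first to the case where $f$ is itself $\varphi$-autocomposed, and then to absorb the finite ``state set'' of the defining transducer into an enlarged vector space, thereby replacing the regular partition by the trivial one.

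For the reduction, I would observe that the class of potentially $\varphi$-composed functions is stable under $g \mapsto \pi \circ g \circ \iota$ for $F$-linear injections $\iota$ and surjections $\pi$: if $g(s) = \pi_1 \circ g_1(s) \circ \iota_1$ with $g_1$ a $\varphi$-composed function, then $\pi \circ g(s) \circ \iota = (\pi\circ\pi_1)\circ g_1(s)\circ(\iota_1\circ\iota)$, and a composite of injections (resp.\ surjections) is injective (resp.\ surjective). Applying this with $g$ the $\varphi$-autocomposed witness appearing in Definition~\ref{D:potentially composed} for a given potentially $\varphi$-autocomposed $f$, it suffices to prove that every $\varphi$-autocomposed function $f'\colon \Sigma_p^* \to \End_\ZZ(V')$ is potentially $\varphi$-composed.

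So I would fix a regular partition $P$ of $\Sigma_p^*$ and a function $\tau\colon \Pre_P \times \{0,\dots,p-1\} \to \End_\varphi(V')$ with $f'(s) = \prod f_\tau(s)$. Write $Q = \Pre_P$, which is finite since $P$ is regular, equipped with its concatenation action $Q \times \Sigma_p^* \to Q$; write $q\cdot a$ for the action of a single letter $a$, and let $q_0 \in Q$ be the class of the empty word. Put $V'' = \bigoplus_{q \in Q} V'$, a finite-dimensional $F$-vector space whose elements I denote $(v_q)_{q\in Q}$, and for each $a \in \{0,\dots,p-1\}$ define $\tau''(a) \in \End_\ZZ(V'')$ by $\bigl(\tau''(a)(v)\bigr)_q = \tau(q,a)(v_{q\cdot a})$ for all $q \in Q$. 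A one-line check shows each $\tau''(a)$ is $\varphi$-semilinear, so $\tau''$ determines a $\varphi$-composed function $f''$ with $f''(a_1\cdots a_n) = \tau''(a_1)\circ\cdots\circ\tau''(a_n)$. Let $\iota_0\colon V' \to V''$ be the diagonal embedding $v \mapsto (v)_{q\in Q}$ and $\pi_0\colon V'' \to V'$ the projection $(v_q)_q \mapsto v_{q_0}$; these are $F$-linear, and injective resp.\ surjective.

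It then remains to check that $\pi_0 \circ f''(s) \circ \iota_0 = f'(s)$ for all $s$, which I would do by induction on $n = \len(s)$: for $s = a_1\cdots a_n$ and $q_i$ the class of the prefix $a_1\cdots a_i$, the $q_0$-component of $f''(s)\bigl(\iota_0(v)\bigr)$ works out to $\tau(q_0,a_1)\circ\tau(q_1,a_2)\circ\cdots\circ\tau(q_{n-1},a_n)(v)$, which is exactly $\prod f_\tau(s)$ applied to $v$. Combining this with the reduction above finishes the proof. The one point that needs care is precisely this induction: composition in $\End_\ZZ(V'')$ is evaluated right-to-left while a word is read left-to-right, so one must confirm that the formula for $\tau''$ threads the index slots consistently — and indeed it is chosen so that prepending $\tau''(a)$ to a composition carries slot $q$ to slot $q\cdot a$, exactly mirroring how $f_\tau$ accumulates letters from the left. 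I do not expect any obstacle beyond keeping this bookkeeping straight.
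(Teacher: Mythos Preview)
Your proposal is correct and follows essentially the same approach as the paper: reduce to the genuinely $\varphi$-autocomposed case, replace $V$ by the direct sum of copies indexed by $\Pre_P$, define the new semilinear maps by $(\tau''(a)(v))_q = \tau(q,a)(v_{q\cdot a})$, and use the diagonal inclusion and the projection onto the empty-word component. You supply more detail than the paper does (the explicit reduction step and the inductive verification of the key identity), but the construction is identical.
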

\begin{proof}
It is enough to check that any $\varphi$-autocomposed function $f$ as in Definition~\ref{D:automatic1}
is potentially $\varphi$-composed.
Let $V'$ be the direct sum of copies of $V$ indexed by $\Pre_P$.
Let $\iota: V \to V'$ be the map taking $\bv$ to $(\bv)_{q \in \Pre_P}$. Let $\pi: V' \to V$ be the projection onto the copy of $V$ indexed by the prefix class of the empty string.
Let $\tau': \{0,\dots,p-1\}
\to \End_\varphi(V')$ be the function taking $a$ to the map
\[
(\bv_q)_{q \in \Pre_P} \mapsto (\tau(q, a)(\bv_{qa}))_{q \in \Pre_P}.
\]
Let $f': \Sigma_p^* \to \End_{\ZZ}(V')$ be the $\varphi$-composed function associated to $\tau'$.
Then \eqref{eq:potentially composed} holds, as desired.
\end{proof}
\begin{cor} \label{C:automatic reverse}
Let $V$ be a finite-dimensional $F$-vector space.
Let $f: \Sigma_p^* \to \End_{\ZZ}(V)$ be a potentially $\varphi$-composed (resp.\ potentially $\varphi$-autocomposed) function.
Then the function $f': \Sigma_p^* \to \End_{\ZZ}(V^\dual)$ taking $s$ to
$T(f(\rev(s)))$ is potentially $\varphi^{-1}$-composed (resp.\ potentially $\varphi^{-1}$-autocomposed).
\end{cor}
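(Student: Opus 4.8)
The plan is to reduce the autocomposed case to the composed case using Lemma~\ref{L:flatten composed}, so that it suffices to treat a potentially $\varphi$-composed function, and then to unwind Definition~\ref{D:potentially composed} and Definition~\ref{D:automatic1} while commuting the transpose map $T$ past a composition. First I would note that since reversal is involutive and takes the trivial partition to the trivial partition, it is enough to prove the composed statement; the autocomposed statement then follows by first flattening $f$ to a potentially $\varphi$-composed function via Lemma~\ref{L:flatten composed}, applying the composed case, and observing that the resulting potentially $\varphi^{-1}$-composed function is in particular potentially $\varphi^{-1}$-autocomposed. So assume $f(s) = \pi \circ f'(s) \circ \iota$ with $f'$ genuinely $\varphi$-composed via $\tau: \{0,\dots,p-1\} \to \End_\varphi(V')$, so $f'(a_1\cdots a_n) = \tau(a_1)\circ\cdots\circ\tau(a_n)$.

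Next I would write out $f'(s)$ for $f'\colon s \mapsto T(f(\rev(s)))$ and track how $T$ interacts with the data. The identity characterizing $T$ gives, for $\varphi$-semilinear $g$, that $T(g)$ is $\varphi^{-1}$-semilinear, and a direct check against the defining pairing identity shows $T$ is an anti-homomorphism in the appropriate graded sense: $T(g_1 \circ g_2) = T(g_2) \circ T(g_1)$ whenever the composite makes sense (here $g_1$ is $\varphi^{m}$-semilinear and $g_2$ is $\varphi^{n}$-semilinear, and the $\varphi^{-1}$-exponent on $T(g_2)$ is $n$, on $T(g_1)$ is $m$, matching up). Likewise $T$ interacts with pre- and post-composition by $F$-linear maps $\iota,\pi$ by passing to their transposes $\iota^\dual, \pi^\dual$ with the order reversed. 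Applying all of this to $f(\rev(s)) = \pi \circ f'(\rev(s)) \circ \iota$ and then to the expansion of $f'(\rev(s))$ as a product of $\tau$-values in the reversed order, the two reversals cancel and I obtain
\[
T(f(\rev(a_1\cdots a_n))) = \iota^\dual \circ T(\tau(a_1)) \circ \cdots \circ T(\tau(a_n)) \circ \pi^\dual.
\]
So with $V'' = (V')^\dual$, $\iota'' = \pi^\dual\colon V^\dual \to V''$ (injective since $\pi$ is surjective), $\pi'' = \iota^\dual\colon V'' \to V^\dual$ (surjective since $\iota$ is injective), and $\tau''(a) = T(\tau(a)) \in \End_{\varphi^{-1}}(V'')$, the function $s \mapsto \tau''(a_1)\circ\cdots\circ\tau''(a_n)$ is $\varphi^{-1}$-composed and exhibits $f'$ as potentially $\varphi^{-1}$-composed, as desired.

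For the autocomposed case handled directly (in case one prefers not to route through flattening), the same computation applies after replacing $\tau(a)$ by $\tau(q,a)$ and carefully matching up the prefix classes of $P$ with the postfix classes of $\rev(P)$; here one uses the relation $\rev(\Pre_P) = \Post_{\rev(P)}$ recorded in Definition~\ref{D:regular language}, together with Lemma~\ref{L:rev} to know $\rev(P)$ is again a regular partition. The only real point requiring care — and the step I expect to be the main obstacle — is bookkeeping the $\varphi$-exponents through the anti-homomorphism property of $T$: one must check that $T$ reverses the order of a composition while the length-grading (which controls which power of $\varphi$ each factor is semilinear for) is itself reversed by $\rev$, so that the two reversals are compatible and the composite $\tau''(a_1)\circ\cdots\circ\tau''(a_n)$ is actually well-defined as a $\varphi^{-1}$-semilinear endomorphism. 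Once that compatibility is verified, everything else is formal.
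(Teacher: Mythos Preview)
Your proposal is correct and follows exactly the paper's approach: the paper's proof reads in full ``The $\varphi$-composed case is trivial; the $\varphi$-autocomposed case then follows by Lemma~\ref{L:flatten composed},'' and you have simply written out the details of the ``trivial'' composed case (the anti-homomorphism property of $T$ cancelling the string reversal, with $\iota^\dual,\pi^\dual$ swapping roles) and then invoked the same flattening lemma for the autocomposed reduction. One small notational quibble: you use $f'$ both for the auxiliary composed function from Definition~\ref{D:potentially composed} and for the target function $s\mapsto T(f(\rev(s)))$ from the statement, which could confuse a reader.
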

\begin{proof}
The $\varphi$-composed case is trivial; the $\varphi$-autocomposed case then follows by Lemma~\ref{L:flatten composed}.
\end{proof}

\begin{remark} \label{R:tensor product}
For $V, W$ two finite-dimensional $F$-vector spaces,
$f: \Sigma_p^* \to \End_{\ZZ}(V)$ a (potentially) $\varphi$-autocomposed function,
and $g: \Sigma_p^* \to \End_{\ZZ}(W)$
a (potentially) $\varphi$-autocomposed function, the function
$\Sigma_p^* \to \End_{\ZZ}(V \otimes_F W)$ taking
$s$ to the endomorphism
$\bv \otimes \bw \mapsto f(s)(\bv) \otimes g(s)(\bw)$ is again (potentially) $\varphi$-autocomposed.
\end{remark}

\begin{defn} \label{D:restricted automatic}
For $V$ a finite-dimensional $F$-vector space and  $f: L_p^0 \to \End_{\ZZ}(V)$ a function, we say that $f$ is \emph{$\varphi$-automatic} if the function $\tilde{f}: \Sigma_p^* \to \End_{\ZZ}(V)$ taking $s$ to $f(s)$ if $s \in L_p^0$ and 0 otherwise is potentially $\varphi$-composed (or equivalently by Lemma~\ref{L:flatten composed}, potentially $\varphi$-autocomposed). Using Remark~\ref{R:tensor product}, one may verify that the restriction to $L_p^0$ of any potentially $\varphi$-autocomposed function on $\Sigma_p^*$ is $\varphi$-automatic.
\end{defn}

For generalized power series, we need an analogue of $\varphi$-automatic functions in which $L_p^0$ is replaced by $L_p$.
\begin{defn} \label{D:biautomatic}
Let $V$ be a finite-dimensional $F$-vector space. A function $f: L_p \to \End_{\ZZ}(V)$ is 
\emph{$\varphi$-biautomatic} if there exist a finite-dimensional $F$-vector space $V'$, an
$F$-linear injection $\iota: V \to V'$, an $F$-linear surjection $\pi: V' \to V$,
 a $\varphi$-composed function $f_1: \Sigma_p^* \to \End_{\ZZ}(V')$, and a $\varphi^{-1}$-composed function $f_2:\Sigma_p^* \to \End_{\ZZ}(V')$ such that
\[
f(s_1.s_2) = \pi \circ f_1(\rev(s_1)) \circ f_2(s_2) \circ \iota \qquad (s_1 \in L_p^0, s_2 \in \rev(L_p^0)).
\]
By Lemma~\ref{L:flatten composed}, one gets the same class of functions if one allows $f_1$ (resp.\ $f_2$) to be $\varphi$-autocomposed (resp.\ $\varphi^{-1}$-autocomposed).
By Corollary~\ref{C:automatic reverse}, $f$ is $\varphi$-biautomatic if and only if the function $f': L_p \to \End_{\ZZ}(V^\dual)$ taking $s$ to $T(f(\rev(s)))$ is.
\end{defn}

\begin{lemma} \label{L:composed affine}
Let $V$ be a finite-dimensional $F$-vector space.
Choose $a \in \ZZ_{>0}$, $b \in \ZZ_{\geq 0}$.
\begin{enumerate} 
\item[(a)]
Let $f: L_p^0 \to \End_{\ZZ}(V)$ be a function.
 Define the function 
$f': L_p^0 \to \End_{\ZZ}(V)$ as follows: for $s \in L_p^0$, if $\left| s \right| = a \left| t \right| + b$ for some $t \in L_p^0$, put $f'(s) = f(t)$; otherwise, put $f'(s) = 0$. Then $f$ is $\varphi$-automatic (as in Definition~\ref{D:restricted automatic}) if and only if $f'$ is.
\item[(b)]
Let $f: L_p \to \End_{\ZZ}(V)$ be a function. Define the function 
$f': L_p \to \End_{\ZZ}(V)$ as follows: for $s \in L_p$, if $\left\| s \right\| = a \left\| t \right\| + b$ for some $t \in L_p$, put $f'(s) = f(t)$; otherwise, put $f'(s) = 0$. Then $f$ is $\varphi$-biautomatic if and only if $f'$ is.
\end{enumerate}
\end{lemma}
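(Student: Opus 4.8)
Here is how I would go about it.

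The plan is to read the lemma as a compatibility statement for the affine change of variable $i \mapsto ai+b$, and to reduce both directions of both parts to the single assertion that \emph{precomposing an automatic function with a uniform transducer again gives an automatic function}. Write $\mu$ for the bijection $i \mapsto ai+b$ of $\ZZ[p^{-1}]_{\geq 0}$ (in part (a), of $\ZZ_{\geq 0}$); then the condition ``$\|s\| = a\|t\|+b$'' says $s = \mu(t)$, so that $f = f' \circ \mu$ and, on the image of $\mu$, $f' = f \circ \mu^{-1}$ (extended by $0$ off that image). The first task is to supply the transducers. For the converse implications (``$f'$ automatic $\Rightarrow f$ automatic'') one uses $f = f' \circ \mu$ directly, together with Example~\ref{exa:p-adic arithmetic}, which presents $\mu$ as a uniform transducer on base-$p$ representations scanned from least to most significant. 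For the forward implications one must instead realize the partial map $\mu^{-1}$: writing $a = p^e a'$ with $\gcd(a',p)=1$, division by $p^e$ is a digit shift, subtraction of $b$ is a transducer with bounded borrow, and division by $a'$ is a transducer whose state is the running remainder, updated deterministically because $a'$ is invertible modulo $p$ (exactness being signalled by the remainder vanishing once the input is exhausted). The locus on which $\mu^{-1}$ is undefined --- the failure of the relevant nonnegativity and divisibility conditions --- is then a regular language, recognized by the same finite-state machine, on which we output the zero endomorphism. So in every case the new function is the old one precomposed with a (possibly partial) uniform transducer and extended by $0$.

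Now the reduction itself. Given an automatic $F$ --- i.e., with $\tilde F$ potentially $\varphi$-composed --- and a uniform transducer $g$, one shows $s \mapsto \tilde F(g(s))$ is potentially $\varphi$-autocomposed: sandwiching by the same $\pi,\iota$ that present $\tilde F$, take for $P$ the regular partition recording the internal state of $g$, and let the transition data compose, at each input digit, the product of the $\tau$-endomorphisms of a $\varphi$-composed representative of $\tilde F$ indexed by the digits $g$ emits at that step (and the zero map on the ``undefined'' states of $g$); then invoke Lemma~\ref{L:flatten composed} to flatten back to a potentially $\varphi$-composed function. Since the transducers above scan in the least-significant-first direction rather than the left-to-right direction of Definitions~\ref{D:automatic1} and \ref{D:biautomatic}, this construction has to be bracketed by two applications of Corollary~\ref{C:automatic reverse} (using that the transpose $T$ is an involution), so that $g$ runs in the direction the composed structure is read. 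This gives part (a).

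Part (b) is the same argument carried out for the two halves $f_1(\rev(s_1))$ and $f_2(s_2)$ of Definition~\ref{D:biautomatic} separately, since multiplication by $a$ and addition of $b$ both respect the processing order built into that definition (the integer part scanned outward from the radix point, the fractional part scanned against the multiplication/division carry). The one genuinely new ingredient is that the bounded ``carry across the radix point'' produced by the fractional-part transducer must be handed to the integer-part transducer; this is possible precisely because Definition~\ref{D:biautomatic} glues $f_1$ and $f_2$ through an intermediate vector space $V'$, so one simply enlarges $V'$ to carry that datum.

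I expect the main obstacle to be this radix-point gluing in part (b), together with the verification that each ``not in the image of $\mu$'' locus is genuinely a regular language recognizable by a single automaton; and running through all of it, the most delicate bookkeeping is keeping the $\varphi$-semilinearity twists consistent as the transducer is absorbed into the composed structure.
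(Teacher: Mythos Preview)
Your approach is essentially the paper's, which is extremely terse: apply Corollary~\ref{C:automatic reverse} to pass to the least-significant-first direction, invoke Example~\ref{exa:p-adic arithmetic} (tacitly understood to cover both $\mu$ and its partial inverse) to settle (a), and then derive (b) from (a) together with another use of Corollary~\ref{C:automatic reverse}. You have supplied considerably more detail on precisely the points the paper glosses over---the explicit realization of $\mu^{-1}$ as a finite-state process, the bounded carry across the radix point in (b), and the $\varphi$-semilinearity bookkeeping when the transducer changes string length---and your identification of these as the genuine work is apt.
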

\begin{proof}
By Corollary~\ref{C:automatic reverse}, we may check (a) by verifying the analogous statement with the strings reversed; this follows from 
Example~\ref{exa:p-adic arithmetic}.
 From (a) and Corollary~\ref{C:automatic reverse}, we easily deduce (b).
\end{proof}

\section{Automatic series over a general field}

We now specialize to the case of an algebraically closed field of characteristic $p$,
and put forward definitions of \emph{automaticity} for ordinary and generalized power series that extend the definitions over finite fields.

\begin{defn}
For $F$ a field of characteristic $p$, let $\varphi: F \to F$ be the Frobenius endomorphism $x \mapsto x^p$. In particular, we are assuming globally that $K$ is an algebraically closed field of characteristic $p$, so $\varphi$ acts as an automorphism on both $K$ and $K((t^{\QQ}))$.
\end{defn}

\begin{defn} \label{D:automatic series}
A power series $x = \sum_{n=0}^\infty x_n t^n \in K \llbracket t \rrbracket$ is \emph{$p$-automatic} if there exists a $\varphi$-automatic function $f: L_p^0 \to \End_{\ZZ}(K)$
(viewing $K$ as a vector space over itself) such that
\[
x_{|s|} = f(\rev(s))(1) \qquad (s \in L_p^0).
\]
Similarly, a generalized power series $x = \sum_i x_i t^i \in K((t^{\QQ}))$ is \emph{$p$-automatic} if its support is contained in $\ZZ[p^{-1}]_{\geq 0}$ and 
there exists a $\varphi$-biautomatic function $f: L_p \to \End_{\ZZ}(K)$ such that
\[
x_{\|s\|} = f(s)(1) \qquad (s \in L_p);
\]
this agrees with the previous definition in case $x \in K \llbracket t \rrbracket$.
\end{defn}

\begin{defn} \label{D:p-quasi-automatic}
We say $x= \sum_i x_i t^i \in K  \llbracket t^{\QQ} \rrbracket$ is \emph{$p$-quasi-automatic} if the following conditions hold.
\begin{enumerate}
\item[(a)]
For some integers $a,b$ with $a>0, b \geq 0$, the support of $x$ is contained in $\{(i-b)/a: i \in \ZZ[p^{-1}]_{\geq 0}\}$.
\item[(b)]
For some $a,b$ as in (a), the  series $\sum_i x_i t^{ai+b}$ is $p$-automatic. It will follow from Lemma~\ref{L:any ab} that the same will hold for any $a,b$ as in (a); in particular, if $x$ is supported on $\ZZ[p^{-1}]_{\geq 0}$, then $x$ is $p$-quasi-automatic if and only if $x$ is $p$-automatic.
\end{enumerate}
Let $K (( t^{\QQ} ))_{\aut}$ be the subset of $K (( t^{\QQ} ))$ consisting of $p$-quasi-automatic elements.
\end{defn}

\begin{lemma} \label{L:any ab}
For $x = \sum_i x_i t^i\in K((t^{\QQ}))$ supported on $\ZZ[p^{-1}]_{\geq 0}$,
and $a,b$ integers with $a > 0$, $b \geq 0$, $x$ is $p$-automatic if and only if $\sum_i x_i t^{ai+b}$ is. 
\end{lemma}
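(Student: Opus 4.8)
The plan is to reduce the general statement to the affine-substitution results already in hand, namely Lemma~\ref{L:composed affine}(b) together with the definitions of $\varphi$-biautomatic (Definition~\ref{D:biautomatic}) and $p$-automatic (Definition~\ref{D:automatic series}). First I would observe that since $x$ is supported on $\ZZ[p^{-1}]_{\geq 0}$ and $a>0, b\geq 0$, the series $y := \sum_i x_i t^{ai+b}$ is again supported on $\ZZ[p^{-1}]_{\geq 0}$, so both $x$ and $y$ fall under the generalized-power-series clause of Definition~\ref{D:automatic series}, and the question is purely about the existence of a suitable $\varphi$-biautomatic coefficient function $f: L_p \to \End_{\ZZ}(K)$.

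The heart of the argument is the translation between the coefficient function of $x$ and that of $y$. By definition, $y_j = x_i$ when $j = ai+b$ and $y_j = 0$ otherwise; equivalently, for $s \in L_p$ we want $y_{\|s\|} = x_t$ where $\|s\| = a\|t\|+b$, if such $t \in L_p$ exists, and $y_{\|s\|} = 0$ otherwise. This is exactly the shape of the transformation $f \mapsto f'$ appearing in Lemma~\ref{L:composed affine}(b): if $g: L_p \to \End_{\ZZ}(K)$ is a $\varphi$-biautomatic function representing $x$ (in the sense that $x_{\|s\|} = g(s)(1)$), then the function $g'$ produced by Lemma~\ref{L:composed affine}(b) for these values of $a,b$ is again $\varphi$-biautomatic, and one checks directly from the formulas that $g'(s)(1) = y_{\|s\|}$ for all $s \in L_p$, so $g'$ represents $y$. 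This gives the implication from ``$x$ is $p$-automatic'' to ``$y$ is $p$-automatic.'' For the converse, I would run the same reasoning in reverse: starting from a $\varphi$-biautomatic $h$ representing $y$, the ``otherwise $0$'' branch of Lemma~\ref{L:composed affine}(b) ensures that $h'$ vanishes precisely on those $s$ not of the form $a\|t\|+b$, and on the remaining $s$ it recovers the value $x_t$, so $h'$ (suitably re-indexed) represents $x$; more cleanly, one notes that Lemma~\ref{L:composed affine}(b) is an ``if and only if'' already, so $g$ is $\varphi$-biautomatic iff $g'$ is, and the representability statements for $x$ and $y$ are in bijective correspondence through this equivalence.

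The one point requiring a little care — and the step I expect to be the main (minor) obstacle — is making the bookkeeping precise: Lemma~\ref{L:composed affine}(b) is phrased as an equivalence between $\varphi$-biautomaticity of $f$ and of $f'$ for a fixed $a,b$, and one must verify that the coefficient-extraction identities $x_{\|s\|}=g(s)(1)$ and $y_{\|s\|}=g'(s)(1)$ are genuinely matched up by the same construction, i.e.\ that evaluating at $1 \in K$ commutes with the direct-sum and projection operations hidden inside the proof of Lemma~\ref{L:composed affine}. Since all those operations are $F$-linear (here $F = K$) and the vector space is $K$ itself, this is a formality, but it is worth spelling out that no information about the special element $1 \in K$ is used beyond its being a fixed vector. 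With that check in place, the two implications assemble into the desired equivalence, completing the proof.
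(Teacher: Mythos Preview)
Your proposal is correct and follows exactly the paper's approach: the paper's entire proof is the single sentence ``This is a special case of Lemma~\ref{L:composed affine}(b),'' and your expansion spells out how the coefficient functions of $x$ and $y$ are related precisely as the pair $(f,f')$ in that lemma, so that the ``if and only if'' there transfers directly. One small slip worth fixing: in your converse paragraph, forming $h'$ from $h$ via the primed construction moves \emph{further away} from $x$ (it represents $\sum_i x_i t^{a^2 i + ab + b}$, not $x$); the ``more cleanly'' clause is the right formulation, namely set $g(t) := h(s_t)$ with $\|s_t\| = a\|t\|+b$ and apply the $\Leftarrow$ direction of Lemma~\ref{L:composed affine}(b) to the pair $(g,g')$.
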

\begin{proof}
This is a special case of Lemma~\ref{L:composed affine}(b).
\end{proof}

The preceding definitions are compatible with \cite{k-series-automata} in the following sense.
\begin{lemma} \label{L:compatibility}
If $K = \overline{\FF}_p$, then $x \in K (( t^{\QQ} ))$ is $p$-automatic (resp.\ $p$-quasi-automatic) in the sense of Definition~\ref{D:p-quasi-automatic} if and only if it is $p$-automatic (resp. $p$-quasi-automatic) in the sense of 
\cite[Definition~4.1.2]{k-series-automata}.
\end{lemma}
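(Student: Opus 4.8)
The plan is to show that the two notions of $p$-automaticity coincide when $K = \overline{\FF}_p$ by reconciling the semilinear-algebraic description of Definition~\ref{D:automatic series} with the DFAO-based description of \cite[Definition~4.1.2]{k-series-automata}. The key point is that over $\overline{\FF}_p$, every element lies in some finite subfield $\FF_q$, and Frobenius-semilinear endomorphisms of a finite-dimensional $\FF_q$-vector space are built from finitely much data; so a $\varphi$-composed (or $\varphi$-biautomatic) function, which a priori has image in an infinite monoid, in fact factors through a finite set, and the resulting finite-state structure is exactly a DFAO. Conversely, the transition function of a DFAO over $\FF_q$ can be repackaged as a $\varphi$-semilinear map on the (finite-dimensional) free $\FF_q$-vector space on the state set.

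First I would treat the ordinary power series case (support in $\ZZ_{\geq 0}$), reducing the generalized case to it afterward. Given a $p$-automatic $x = \sum x_n t^n$ in the sense of Definition~\ref{D:automatic series}, unwind the definition: there is a finite-dimensional $K$-vector space $V'$, maps $\iota, \pi$, and $\tau: \Sigma_p \to \End_\varphi(V')$ with $x_{|s|} = \pi(\tau(a_1)\circ\cdots\circ\tau(a_n)(\iota(1)))$ for $s = a_1\cdots a_n = \rev$ of the base-$p$ string. Choosing a basis of $V'$, all the finitely many structure constants (entries of the $\tau(a)$, of $\iota$, of $\pi$) lie in a common $\FF_q$. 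Then $V'$ descends to an $\FF_q$-vector space $V'_0$ with $V' = V'_0 \otimes_{\FF_q} K$, and each $\tau(a)$ is $\varphi$-semilinear, hence determined by an $\FF_q$-linear map $V'_0 \to V'_0$ composed with entrywise $q$-power-Frobenius; on the $\FF_q$-points, which form a finite set, the maps $\tau(a)$ act as honest functions. Taking the DFAO whose states are (the relevant finite set of reachable vectors in) $V'_0(\FF_q)$, with transition $\delta(v,a) = \tau(a)(v)$ read in the appropriate direction and output $v \mapsto \pi(\iota$-coordinate$)$, one recovers precisely a DFAO computing $n \mapsto x_n$ on base-$p$ expansions, which is the \cite{k-series-automata} definition. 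For the converse, given such a DFAO over $\FF_q$, let $V'_0$ be the free $\FF_q$-module on the state set $Q$, define $\tau(a): V'_0 \to V'_0$ by sending the basis vector $e_q$ to $e_{\delta(q,a)}$ and extending $q$-semilinearly (harmless on $\FF_p$-points, and the whole map is then $\varphi$-semilinear after base change to $K$), set $\iota(1) = e_{q_0}$ and $\pi(e_q) = \tau_{\mathrm{out}}(q)$; this exhibits $x$ as $p$-automatic in the sense of Definition~\ref{D:automatic series}. The $p$-quasi-automatic equivalence then follows immediately from the matching clauses (a), (b) in the two definitions together with Lemma~\ref{L:any ab} and \cite[Definition~4.1.2]{k-series-automata}.

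For the generalized power series case, the same descent argument applies to a $\varphi$-biautomatic function: the data now consists of $f_1$ ($\varphi$-composed) and $f_2$ ($\varphi^{-1}$-composed) together with $\iota, \pi$, all defined over a common $\FF_q$; reading $f_1$ and $f_2$ as transition functions (one for the integer part, one for the fractional part of $\|s\|$) of a single DFAO on the alphabet $\Sigma_p \cup \{.\}$ reproduces the two-sided automaton structure used in \cite{k-series-automata}, and conversely. One should check that the well-ordered-support condition is automatically respected on both sides — but this is not an extra hypothesis to verify, since in both frameworks one only considers $x$ that genuinely lies in $K((t^{\QQ}))$, so the support constraint is inherited rather than imposed, and clause (a) in Definition~\ref{D:p-quasi-automatic} matches clause (a) in \cite[Definition~4.1.2]{k-series-automata} verbatim.

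The main obstacle I anticipate is purely bookkeeping: making the dictionary between ``$\varphi$-semilinear endomorphism of $V' = V'_0 \otimes_{\FF_q} K$'' and ``function on the finite set $V'_0(\FF_q)$'' fully precise, in particular getting the direction of string reversal consistent (Definition~\ref{D:automatic series} feeds in $\rev(s)$, and the DFAO of \cite{k-series-automata} may read most-significant-digit-first or least-significant-digit-first), and confirming that the finite set of states one extracts is exactly — not just up to Myhill–Nerode equivalence — compatible with what \cite[Definition~4.1.2]{k-series-automata} demands. Proposition~\ref{P:Myhill-Nerode} and Lemma~\ref{L:rev} handle any residual mismatch of this kind, so no genuine mathematical difficulty remains; the argument is a descent-plus-translation, and I would keep the write-up correspondingly brief, citing the relevant definitions of \cite{k-series-automata} explicitly and invoking Lemma~\ref{L:flatten composed} to pass freely between the composed and autocomposed formulations.
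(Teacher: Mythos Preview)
Your proposal is correct and follows essentially the same approach as the paper: the paper's proof is a one-sentence observation that when $K = \overline{\FF}_p$, all of the linear-algebraic data defining a $p$-automatic function can be realized over a finite subfield, and you have simply unpacked what that observation means in detail (descent of $V'$, $\tau$, $\iota$, $\pi$ to some $\FF_q$, then reading the $\varphi$-semilinear maps as functions on the finite set of $\FF_q$-points to recover a DFAO, and conversely linearizing a DFAO on the free module over its state set). One small terminological slip: in the converse direction you want to extend $\varphi$-semilinearly (i.e., $p$-semilinearly), not ``$q$-semilinearly,'' but this does not affect the argument.
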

\begin{proof}
It suffices to observe that when $K = \overline{\FF}_p$, the linear-algebraic data defining any $p$-automatic function can all be realized over a finite subfield of $K$.
\end{proof}

\begin{remark} \label{R:automatic properties}
As in \cite{k-series-automata}, we may verify that $K (( t^{\QQ} ))_{\aut}$
is stable under a number of basic operations.
\begin{enumerate}
\item[(a)]
The set $K (( t^{\QQ} ))_{\aut}$ is a $K$-vector subspace of $K (( t^{\QQ} ))$ containing $K$. By Lemma~\ref{L:any ab}, it is also a $K[t]$-submodule.
\item[(b)]
By Remark~\ref{R:tensor product},
the termwise product of two $p$-automatic functions is again $p$-automatic.
In particular, $K (( t^{\QQ} ))_{\aut}$ is stable under \emph{Hadamard products}: if it contains $\sum_i x_i t^i, \sum_i y_i t^i$, then it also contains
$\sum_i x_i y_i t^i$.
\item[(c)]
By (b), $K (( t^{\QQ} ))_{\aut}$ is stable under any substitution of the form $t \mapsto \mu t$ for $\mu \in K^\times$. (Note that this substitution is not specified by $\mu$ alone, as one must also choose a coherent sequence of roots of $\mu$.)
\item[(d)]
By Lemma~\ref{L:any ab}, $K (( t^{\QQ} ))_{\aut}$ is stable under
any substitution of the form $t \mapsto t^i$ for $i \in \QQ_{>0}$. 
Since it is also stable under automorphisms of $K$ applied coefficientwise,
it is thus stable under $\varphi$ and $\varphi^{-1}$.
\item[(e)]
For $x = \sum_i x_i t^i \in K((t^{\QQ}))$ and $r \in \RR$, define the \emph{$r$-truncation} of $x$ to be the series $\sum_{i<r} x_i t^i$. With this definition, if $x$ is $p$-quasi-automatic, then so is its $r$-truncation: Lemma~\ref{L:accumulation point} implies that 
$\sup\{i<r: x_i \neq 0\} \in \QQ$ and so we may reduce to the case $r\in \QQ$,
which follows from (b) plus Example~\ref{exa:Dedekind cut}.
\item[(f)]
Any finite linear combination $\sum_i y_i z_i$ in which $y_i, z_i$ are $p$-automatic, $y_i \in K((t))$, and $z_i$ is supported on $[0,1)$ is $p$-automatic. Conversely, any $p$-automatic series can be written as such a linear combination by choosing a basis of the vector space $V$ used in the construction.
\end{enumerate}
One result which is nontrivial to check is that $K (( t^{\QQ} ))_{\aut}$ 
is closed under multiplication; for $K = \overline{\FF}_p$ this is shown in \cite[Lemma~7.2.2]{k-series-automata} using the fact that addition of reversed base-$p$ expansions can be described using a uniform transducer in the style of Example~\ref{exa:p-adic arithmetic}. The general case can be proved by a similar but even more complicated argument, which we omit here; the closure under multiplication will anyway follow
\emph{a posteriori} from Theorem~\ref{T:integral closure of rational field}, and even from an intermediate result (see Remark~\ref{R:subring}).

Another nontrivial assertion, which has no analogue in \cite{k-series-automata}, is that for any algebraically closed field $K'$ containing $K$, within $K'((t^{\QQ}))$ one has
$K((t^{\QQ})) \cap K'((t^{\QQ}))_{\aut} = K((t^{\QQ}))_{\aut}$; that is, automaticity is insensitive to extensions of the coefficient field. This will again follow \emph{a posteriori} from Theorem~\ref{T:integral closure of rational field}.
\end{remark}

\section{Algebraicity of automatic generalized power series}

We now follow the proof of \cite[Proposition~5.1.2]{k-series-automata} to deduce that $p$-quasi-automatic series are integral over $K(t)$.

\begin{lemma} \label{L:semilinear solution}
Let $F \subset F'$ be an inclusion of algebraically closed fields of characteristic $p$. Let $V$ be a finite-dimensional $F$-vector space, put $V' = V \otimes_F F'$,
and define inclusions $\End_{\varphi}(V) \to \End_{\varphi}(V')$, $\End_{\varphi^{-1}}(V) \to \End_{\varphi^{-1}}(V')$ by formal extension of semilinear maps.
If $\bv \in V'$ satisfies $\bv = f(\bv)$ for some $f \in \End_{\varphi}(V) \cup \End_{\varphi^{-1}}(V)$, then $\bv \in V$.
\end{lemma}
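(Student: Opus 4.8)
The plan is to reduce to a statement about fixed points of a single semilinear operator and then invoke a descent/finiteness argument. First I would observe that the two cases, $f \in \End_\varphi(V)$ and $f \in \End_{\varphi^{-1}}(V)$, are symmetric: if $\bv = f(\bv)$ with $f$ being $\varphi$-semilinear, then applying the transpose/inverse constructions (or simply noting that $F$ and $F'$ are both perfect, so $\varphi$ is an automorphism of each) lets us treat $\varphi^{-1}$-semilinear $f$ by the same token. So assume $f \in \End_\varphi(V)$. The key point is the classical fact (the function-field analogue of Hilbert 90 / Lang's theorem for $\mathrm{GL}_n$, or more elementarily the Artin–Schreier-type solvability of semilinear equations over algebraically closed fields) that for a $\varphi$-semilinear $f$ on a finite-dimensional vector space over an algebraically closed field $F$, the $\FF_p$-subspace $\{\bw \in V : f(\bw) = \bw\}$ is finite-dimensional over $\FF_p$ and, crucially, spans $V$ over $F$ — equivalently, $V$ has a basis of $f$-fixed vectors. (Concretely: choosing a basis, $f$ is $\bw \mapsto A\varphi(\bw)$ for a matrix $A$, and since $F$ is algebraically closed the map $\bw \mapsto A\varphi(\bw) - \bw$ is surjective with finite-dimensional kernel over $\FF_p$; one gets $n$ independent fixed vectors.)

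Granting this, the argument concludes quickly. Let $\bw_1, \dots, \bw_n \in V$ be an $F$-basis of $V$ consisting of vectors fixed by $f$; these remain an $F'$-basis of $V' = V \otimes_F F'$ and are still fixed by the formal extension of $f$. Now write $\bv = \sum_i r_i \bw_i$ with $r_i \in F'$. Applying $f$ and using $\varphi$-semilinearity together with $f(\bw_i) = \bw_i$ gives $f(\bv) = \sum_i \varphi(r_i)\bw_i$, so the equation $\bv = f(\bv)$ becomes $r_i = \varphi(r_i) = r_i^p$ for every $i$. Hence each $r_i$ lies in the fixed field of $\varphi$ acting on $F'$, which is $\FF_p$ (as $F'$ has characteristic $p$), so in particular $r_i \in F$ and therefore $\bv \in V$, as desired.

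The main obstacle is establishing the cited fact that a $\varphi$-semilinear endomorphism over an algebraically closed field of characteristic $p$ admits a full basis of fixed vectors; everything else is bookkeeping. For the writeup I would either cite a standard reference for this (it is the $\varphi$-module analogue of the triviality of étale $\varphi$-modules over an algebraically closed field) or give the short self-contained proof: by induction on $\dim V$, pick any nonzero $\bv_0 \in V$, consider the $F$-span $W$ of $\bv_0, f(\bv_0), f^2(\bv_0), \dots$; if this is all of $V$ one has a cyclic vector and the fixed-point equation unwinds to a single polynomial equation over the algebraically closed $F$ (solvable because, after clearing the semilinearity, the relevant additive polynomial in $F[T]$ has a root), while if $W \subsetneq V$ one applies the inductive hypothesis to $W$ and to $V/W$ and lifts. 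One caveat to handle carefully: $f$ need not be injective (its image is a $\varphi$-semilinear subspace but $f$ can have a kernel), so the reduction should be phrased in terms of the $F$-span of the $f$-orbit rather than assuming invertibility; this is the one place where a little care is needed.
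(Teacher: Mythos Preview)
Your argument has a genuine gap: the assertion that $V$ admits an $F$-basis of $f$-fixed vectors is false for general $\varphi$-semilinear $f$. The simplest counterexample is $f = 0$ on any nonzero $V$, where the only fixed vector is $0$; more generally, whenever $f$ fails to be bijective the $F$-span of the fixed vectors is a proper subspace (it is contained in $\bigcap_n f^n(V)$). Your closing caveat acknowledges that $f$ need not be injective but does not repair this: the inductive sketch you outline is aimed at producing \emph{some} fixed vector, not a spanning set, and no rephrasing in terms of $f$-orbits will manufacture a basis of fixed vectors when none exists.

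The fix is short. Over a perfect field, a $\varphi$-semilinear endomorphism admits a Fitting-type decomposition $V = V_{\mathrm{et}} \oplus V_{\mathrm{nil}}$ with $f$ bijective on $V_{\mathrm{et}}$ and nilpotent on $V_{\mathrm{nil}}$, and this decomposition is preserved by base change to $F'$. On $V'_{\mathrm{nil}}$ the only fixed vector is $0$, so every fixed $\bv \in V'$ already lies in $V'_{\mathrm{et}} = V_{\mathrm{et}} \otimes_F F'$. Your argument then applies verbatim to $V_{\mathrm{et}}$: Lang's theorem gives an $F$-basis of $f$-fixed vectors there, and the coordinate equation $r_i = r_i^p$ forces $\bv \in V_{\mathrm{et}} \subset V$. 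An alternative one-line route avoids the decomposition: the fixed-point locus is a zero-dimensional closed subscheme of affine space over $F$ (its points form an $\FF_p$-vector space of dimension at most $\dim_F V$), and a zero-dimensional scheme over an algebraically closed field acquires no new points over any extension.

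For comparison, the paper gives no self-contained argument here at all; it simply cites Lemma~3.3.5 of the author's earlier paper on finite automata and algebraic extensions of function fields.
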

\begin{proof}
See \cite[Lemma~3.3.5]{k-series-automata}.
\end{proof}

\begin{lemma} \label{L:automatic to algebraic}
If $x \in K (( t^\QQ ))_{\aut}$, then $x$ is integral over $K(t)$.
\end{lemma}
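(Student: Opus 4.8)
The plan is to reduce the general statement to the special case of an honest $p$-automatic series supported on $\ZZ[p^{-1}]_{\geq 0}$, and then to exhibit an explicit algebraic dependence by exploiting the Frobenius-semilinear structure of the defining data. First I would use Definition~\ref{D:p-quasi-automatic}: given $x = \sum_i x_i t^i \in K((t^{\QQ}))_{\aut}$, choose $a,b$ as in condition (a) so that $\tilde{x} = \sum_i x_i t^{ai+b}$ is $p$-automatic. Since integrality over $K(t)$ is preserved under the substitution $t \mapsto t^{1/a}$ followed by multiplication by $t^{-b/a}$ (both of which send $K(t)$-algebraic elements to $K(t)$-algebraic elements, as these lie in a finite extension of $K(t)$), it suffices to show that a $p$-automatic $x = \sum_{i} x_i t^i$ with support in $\ZZ[p^{-1}]_{\geq 0}$ is integral over $K(t)$. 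Moreover, using Remark~\ref{R:automatic properties}(f), I would decompose $x$ as a finite $K$-linear combination $\sum y_j z_j$ with $y_j \in K((t)) \cap K((t^{\QQ}))_{\aut}$ and $z_j$ supported on $[0,1)$; since the algebraic elements form a ring, it is enough to handle each of the two cases $x \in K\llbracket t \rrbracket$ (the classical Christol-type situation) and $x$ supported on $[0,1)$ separately.

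Next, for the honest $p$-automatic case, unwind Definition~\ref{D:automatic series} and Definition~\ref{D:biautomatic}: there is a finite-dimensional $K$-vector space $V'$, maps $\iota, \pi$, and $\varphi$-composed and $\varphi^{-1}$-composed functions $f_1, f_2$ built from $\tau_1, \tau_2: \Sigma_p \to \End_{\varphi^{\pm 1}}(V')$, such that $x_{\|s\|} = \pi(f_1(\rev(s_1)) f_2(s_2) \iota(1))$ for $s = s_1.s_2$. The key computational step is to show that the vector-valued generalized power series $X = \sum_{s} (f_1(\rev(s_1)) f_2(s_2) \iota(1))\, t^{\|s\|} \in V' \otimes_K K((t^{\QQ}))$ satisfies a system of Frobenius-semilinear "Mahler-type" functional equations coming from the digit-by-digit structure: appending a digit $d$ on the left of the integer part corresponds (via Example~\ref{exa:p-adic arithmetic}, or rather its mirror image) to the substitution $t \mapsto t^p$ together with multiplication by $t^d$, and summing over $d \in \Sigma_p$ reconstitutes the whole series. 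Concretely, this should yield a relation of the shape $X = \sum_{d=0}^{p-1} \tau_1(d)\big(\varphi(\cdot)\text{-twist of a piece of }X\big) t^{?}$, i.e., $X$ is fixed by an explicit $\varphi$-semilinear operator acting coordinatewise-by-$\varphi$-on-$K$ and by $t \mapsto t^p$ on the series variable, up to polynomial multipliers in $t$. This exhibits the components of $X$ as solutions of a finite system of equations that is $\varphi$-semilinear over $K[t]$, hence (taking $K[t]$-spans) shows they lie in a finite-dimensional $K[t]$-module stable under $\varphi$; such elements are integral over $K(t)$ by a standard Frobenius-module argument (the Cayley-Hamilton-type relation for the semilinear operator), and $x = \pi(X)$ is a $K$-linear combination of the components of $X$.

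The main obstacle I expect is bookkeeping the "left-appended digit" recursion correctly for the \emph{two-sided} base-$p$ expansions in $L_p$, where the integer part grows on the left and the fractional part grows on the right, and making these two recursions interact with a single series variable $t$. Over $\overline{\FF}_p$ this is exactly the content of \cite[Proposition~5.1.2]{k-series-automata}, and the point of the present write-up is that nothing essential changes: one simply carries the $\varphi$-semilinear maps $\tau_i(d)$ along instead of working with a fixed finite field, so the explicit computation of \cite[Proposition~5.1.2]{k-series-automata} goes through verbatim with $\End_{\varphi}(V')$ in place of matrices over $\FF_q$. Accordingly, the cleanest exposition is probably to invoke that computation directly, noting only the two modifications: (i) the regular-partition/autocomposed generality is reduced to the composed case by Lemma~\ref{L:flatten composed}; (ii) the finiteness of the field plays no role once one tracks semilinear maps, because the relevant fixed-point space is finite-dimensional over $K$ by construction and Lemma~\ref{L:semilinear solution} is not even needed here (it is needed for the converse). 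I would write the proof as: reduce to the $p$-automatic case via Definition~\ref{D:p-quasi-automatic} and the ring property of algebraic elements; then follow the proof of \cite[Proposition~5.1.2]{k-series-automata} mutatis mutandis, replacing matrices over $\FF_q$ by elements of $\End_{\varphi}(V')$, to produce the semilinear functional equation and conclude integrality over $K(t)$.
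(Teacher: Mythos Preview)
Your plan is workable, but it diverges from the paper's argument in two respects and contains one factual error worth flagging.

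\textbf{On the structure of the argument.} The paper does not split via Remark~\ref{R:automatic properties}(f) into a power-series piece and a $[0,1)$-supported piece. Instead it keeps the two halves together: after reducing to the $\varphi$-composed case by Lemma~\ref{L:flatten composed}, it packages the integer-part data and the fractional-part data as operators
\[
g_1 = \sum_{s \in L_p^0} f_1(\rev(s)), \qquad g_2 = \sum_{s \in L_p^0} f_2(s)
\]
on $V' \otimes_K K((t^{\QQ}))$, derives the single fixed-point identity $g_1 = 1 + \sum_{a=0}^{p-1} g_1 \circ \tau_1(a)$ (and the analogue for $g_2$), and then invokes Lemma~\ref{L:semilinear solution} with $F = L$ (the integral closure of $K(t)$ in $K((t^{\QQ}))$) and $F' = K((t^{\QQ}))$ to force $g_1, g_2 \in \End_{\ZZ}(V' \otimes_K L)$. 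The identity $x = \tilde\pi \circ g_1 \circ g_2 \circ \tilde\iota$ then gives $x \in L$ immediately. Your route---two separate Ore-type arguments, one per half---is more elementary in that it avoids the descent lemma, but it requires you to set up and stabilize the chain of $K(t)$-spans carefully in each case (the relation you write gives $U \subseteq K(t)\cdot \varphi(U)$ rather than $\varphi(U) \subseteq U$, so an extra stabilization step is needed before the additive-polynomial relation drops out). The paper's route is shorter and treats both directions of digit-appending uniformly.

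\textbf{On Lemma~\ref{L:semilinear solution}.} Your parenthetical is backwards: in the paper, Lemma~\ref{L:semilinear solution} is used precisely \emph{here}, in the ``automatic $\Rightarrow$ algebraic'' direction, and nowhere in the converse. The converse (Theorem~\ref{T:gen Christol} and Lemma~\ref{L:truncated algebraic to automatic}) relies instead on the pole estimate~\eqref{eq:decimate pole} and on Corollary~\ref{C:automatic AS}. So while it is true that your alternative argument can bypass Lemma~\ref{L:semilinear solution}, you should not claim that the lemma's natural home is the other implication.
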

\begin{proof}
We may assume that $x$ is $p$-automatic.
Let $L$ be the integral closure of $K(t)$ in $K((t^{\QQ}))_{\aut}$.
Choose data as in Definition~\ref{D:biautomatic} with $V = K$.
By Lemma~\ref{L:flatten composed}, we may assume without loss of generality that $f_1$ is the $\varphi$-composed function associated to some map $\tau_1: \Sigma_p \to \End_{\varphi}(V')$ and that $f_2$ is the $\varphi^{-1}$-composed function associated to some map $\tau_2: \Sigma_p \to \End_{\varphi^{-1}}(V')$.
We may formally extend $\iota, \pi$ to $K((t^{\QQ}))$-linear maps
\[
\tilde{\iota}: K((t^{\QQ})) \to V' \otimes_K K((t^{\QQ})), \qquad
\tilde{\pi}: V' \otimes_K K((t^{\QQ})) \to K((t^{\QQ})).
\]
Define $g_1, g_2 \in \End_{\ZZ}(V' \otimes_K K((t^{\QQ})))$ by the formulas
\[
g_1 = \sum_{s \in L_p^0} f_1(\rev(s)), \qquad
g_2 = \sum_{s \in L_p^0} f_2(s).
\]
We then have
\[
g_1 = 1 + \sum_{s' \in L_p^0} \sum_{a=0}^{p-1}
f_1(\rev(as')) = 1 + \sum_{a=0}^{p-1} g_1 \circ \tau_1(a).
\]
By Lemma~\ref{L:semilinear solution},
this equality forces $g_1 \in \End_{\ZZ}(V' \otimes_K L)$.
By similar considerations, we have $g_2 \in \End_{\ZZ}(V' \otimes_K L)$.
By writing
\[
x = \tilde{\pi} \circ g_1 \circ g_2 \circ \tilde{\iota},
\]
we see that $x \in L$, as desired.
\end{proof}

\begin{remark} \label{R:use algebraic to equate}
Lemma~\ref{L:automatic to algebraic}
can be used to give an alternate proof of the ``only if'' implication of Lemma~\ref{L:compatibility}.
Namely, if $x$ is $p$-automatic in the sense
of Definition~\ref{D:p-quasi-automatic}, then it is algebraic over $K(t)$ by Lemma~\ref{L:automatic to algebraic}.
In particular, $x$ is algebraic over $\FF_p(t)$, so by
\cite[Proposition~7.3.4]{k-series-automata} it is $p$-quasi-automatic in the sense of
\cite[Definition~4.1.2]{k-series-automata}. Since $x$ is supported on $\ZZ[p^{-1}]_{\geq 0}$, we may apply \cite[Lemma~2.3.6]{k-series-automata} to see that it is also $p$-automatic in the sense of \cite[Definition~4.1.2]{k-series-automata}.
\end{remark}

\section{Automaticity of algebraic power series via decimation}

Our next step is to derive a generalization of Christol's theorem to an arbitrary algebraically closed field $K$, using the method of \emph{decimation}. This construction turns out to have strong geometric meaning; see Remark~\ref{R:Cartier operator}.

\begin{defn}
Let $L_0$ be either $K(t)$ or $K((t))$, so that $\varphi(L_0)$ is respectivetly $K(t^p)$ or $K((t^p))$. Let $L$ be a finite separable extension of $L_0$.
Then the finite separable extension $\varphi(L)$ of $\varphi(L_0)$ and the purely inseparable extension $L_0$ of $\varphi(L_0)$ must be linearly disjoint.
Consequently, the basis $1,t,\dots,t^{p-1}$ of $L_0$ over $\varphi(L_0)$ is also a basis of $L$ over $\varphi(L)$.
We thus have functions $s_0,\dots,s_{p-1}: L \to L$ uniquely determined by the formula
\[
x = \sum_{i=0}^{p-1} s_{i}(x)^p t^i \qquad (x \in L).
\]
Since a derivation in characteristic $p$ kills any $p$-th power, we also have
\begin{gather} 
\label{eq:decimate2}
\frac{d^i x}{dt^i} = \sum_{j=i}^{p-1} j(j-1)\cdots(j-i+1) s_j(x)^p t^{j-i} \qquad (i=0,\dots,p-1) \\
\label{eq:decimate1}
\frac{d^{p-1} (t^{p-1-i} x)}{dt^{p-1}} = -s_i(x)^p \qquad (i=0,\dots,p-1).
\end{gather}
\end{defn}

\begin{theorem} \label{T:gen Christol}
An element $x \in K \llbracket t \rrbracket$ is integral over $K(t)$ if and only if it is $p$-automatic.
\end{theorem}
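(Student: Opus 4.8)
The plan is to prove the two implications separately, using Lemma~\ref{L:automatic to algebraic} for the ``if'' direction and the decimation machinery for the harder ``only if'' direction.

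\textit{The ``if'' direction} is immediate: if $x \in K\llbracket t \rrbracket$ is $p$-automatic, then $x \in K((t^\QQ))_{\aut}$ by definition (a $p$-automatic power series is in particular $p$-quasi-automatic), so Lemma~\ref{L:automatic to algebraic} gives that $x$ is integral over $K(t)$.

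\textit{The ``only if'' direction} is where the decimation operators $s_0,\dots,s_{p-1}$ enter. Suppose $x \in K\llbracket t \rrbracket$ is integral over $K(t)$, and let $L$ be a finite separable extension of $K(t)$ containing $x$ (passing to the separable closure of $K(t)$ inside $K((t))$; note $x$, lying in $K\llbracket t\rrbracket$, is automatically separable over $K(t)$ once we account for possible inseparability by taking $p$-power roots, or one observes directly that $K\llbracket t \rrbracket$ is separable over $K((t))$ hence over... — more carefully, $K(t)^{\mathrm{sep}} \cap K\llbracket t\rrbracket$ is where $x$ lives, and we may take $L$ to be a finite separable subextension). Let $V \subseteq L$ be a finite-dimensional $K$-subspace, stable under $s_0,\dots,s_{p-1}$, and containing $1$ and $t^j x$ for $0 \le j \le p-1$; such a $V$ exists because one can take the $K$-span of all iterated images $s_{i_1}\cdots s_{i_r}(t^j x)$ together with $1, t$, and check finite-dimensionality using that the $s_i$ contract degrees (writing elements of $L$ with respect to a fixed basis over $K(t)$ and tracking denominators/degrees, the iterated decimations eventually stabilize into a finite-dimensional space — this is the standard finiteness argument behind Christol's theorem). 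Now for $w \in V$ write $w = \sum_{i=0}^{p-1} s_i(w)^p t^i$; extracting the coefficient of $t^n$ from $w = \sum_m w_m t^m$ gives $w_{pm+i} = s_i(w)_m^p$, i.e. reading the base-$p$ digits of the index from the right corresponds to applying the maps $w \mapsto s_i(w)$ followed by a Frobenius on coefficients. Concretely, define $\tau\colon \Sigma_p \to \End_{\varphi^{-1}}(V)$ — or rather, set things up so that $\tau(i)$ is the $\varphi^{-1}$-semilinear map $w \mapsto$ (the map sending $w$ to $s_i$ applied appropriately) — so that for $s \in L_p^0$ with $|s| = n$, the composition $\tau(a_1)\circ\cdots\circ\tau(a_\ell)$ applied to an appropriate vector reads off $x_n$ after a Frobenius. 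Then bundling this with the linear functional ``$n$-th coefficient'' and the vector $x$ shows the function $s \mapsto x_{|s|}$ arises from a $\varphi$-composed (after reversal, via Corollary~\ref{C:automatic reverse}) function on $V^\dual$, hence $x$ is $p$-automatic in the sense of Definition~\ref{D:automatic series}.

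\textit{The main obstacle} is the finiteness of the decimation-stable subspace $V$: one must show that starting from $x$ (and $1, t, \dots, t^{p-1}$) and repeatedly applying $s_0,\dots,s_{p-1}$, one stays inside a finite-dimensional $K$-vector subspace of $L$. In Christol's classical setting over $\FF_q$ this is where one uses that $s_i$ sends a rational function $P/Q$ with $\deg$ controlled by $\deg Q$ to one with comparable or smaller degree, so only finitely many numerators occur; here one works in the finite separable extension $L$ and must track the analogous degree/pole bound with respect to a basis of $L$ over $K(t)$, using formulas \eqref{eq:decimate2} and \eqref{eq:decimate1} to see that $s_i$ is expressible via the $(p-1)$-st derivative and multiplication by powers of $t$, both of which are $K(t)$-linear endomorphisms of $L$ that do not increase the relevant pole order. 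Once $V$ is pinned down, translating the identity $x = \sum_i s_i(x)^p t^i$ into the language of $\varphi$-semilinear endomorphisms and invoking the bookkeeping of \S\ref{sec:composed functions} (Definitions~\ref{D:automatic1} and~\ref{D:automatic series}, Corollary~\ref{C:automatic reverse}) is routine.
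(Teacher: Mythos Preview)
Your overall strategy matches the paper's exactly: invoke Lemma~\ref{L:automatic to algebraic} for one direction, and for the other, show that the $K$-span $V$ of the iterated decimations $s_{i_1} \circ \cdots \circ s_{i_r}(x)$ is finite-dimensional and then read off the automaton. The genuine gap is that you do not actually establish this finiteness; you correctly flag it as ``the main obstacle'' but then only gesture at it. Your suggestion to ``write elements of $L$ with respect to a fixed basis over $K(t)$ and track denominators/degrees'' is not directly workable as stated, because the $s_i$ are not $K(t)$-linear (they satisfy $s_i(f^p y) = f\, s_i(y)$ for $f \in K(t)$, i.e.\ they are semilinear for Frobenius on $K(t)$), so coordinates in a $K(t)$-basis of $L$ do not transform in any controllable way. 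The paper's argument is geometric: realize $L$ as the function field of a smooth projective curve $C$ over $K$, and for each closed point $P \in C$ bound $\ord_P$ on all of $V$ from below. At points where $\ord_P(x) \geq 0$ and $\ord_P(dt) = 0$, formula~\eqref{eq:decimate2} (read for $i = p-1, p-2, \dots, 0$ in turn) gives $\ord_P(y) \geq 0 \Rightarrow \ord_P(s_i(y)) \geq 0$; at the finitely many remaining points, formula~\eqref{eq:decimate1} yields the explicit estimate~\eqref{eq:decimate pole}, whose right-hand side stabilizes under iteration because of the division by $p$. Thus $V$ lies inside a fixed Riemann--Roch space and is finite-dimensional. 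This pole-order bookkeeping is the missing ingredient in your write-up.

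A secondary point: your treatment of separability is muddled. It is in fact true that any $x \in K\llbracket t \rrbracket$ algebraic over $K(t)$ is already separable (since $K$ is perfect one has $K((t^p)) \cap K(t) = K(t^p)$, so $K(t)$ has no purely inseparable extensions inside $K((t))$), but your parenthetical does not argue this cleanly. The paper avoids the issue altogether: since $p$-automaticity is stable under $\varphi^{\pm 1}$ (Remark~\ref{R:automatic properties}(d)), it suffices to check that $\varphi^n(x)$ is $p$-automatic for $n$ large, and $\varphi^n(x)$ is certainly separable over $K(t)$ once $n$ exceeds the inseparability degree.
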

\begin{proof}
By Lemma~\ref{L:automatic to algebraic} we need only check the ``only if'' direction. 
Suppose $x \in K \llbracket t \rrbracket$ is algebraic over $K(t)$.
To check that $x$ is $p$-automatic, by Remark~\ref{R:automatic properties}(e)
we may instead check that $\varphi^n(x)$ is $p$-automatic for some conveniently large integer $n$; we may thus assume that $x$ belongs to a finite separable extension $L$ of $K(t)$.
Let $C$ be a smooth, projective, irreducible curve over $K$ with function field $L$. 
Let $V$ be the minimal $K$-subspace of $L$ containing $x$ and closed under $s_0,\dots,s_{p-1}$. Choose a closed point $P \in C$, an element $y \in L$, and a value $i\in \{0,\dots,p-1\}$.
If $\ord_P(x) \geq 0$ and $\ord_P(dt) = 0$, by applying \eqref{eq:decimate2} with $i=p-1,\dots,0$ in turn we obtain
\[
\ord_P(y) \geq 0 \Longrightarrow \ord_P(s_i(y)) \geq 0.
\]
This excludes only finitely many $P$, for which we apply \eqref{eq:decimate1} to deduce that
\begin{equation} \label{eq:decimate pole}
\ord_P(s_{i}(y)) \geq \left\lceil \frac{\ord_P(y)-(p-1)+(p-1-i) \ord_P(t) - (p-1)\ord_P(dt)}{p} \right\rceil.
\end{equation}
It follows that for each $P$ there exists some integer $C_P$ such that
$C_P = 0$ for all but finitely many $P$ and
\[
\ord_P(y) \geq C_P \qquad \mbox{for all $y \in V$}.
\]
It follows that $V$ is a finite-dimensional $K$-vector space, from which we deduce easily that $x$ is $p$-automatic.
\end{proof}
\begin{cor} \label{C:module for series}
The set $K((t^{\QQ}))_{\aut}$ is a module over $K((t^{\QQ}))_{\aut} \cap K((t))$.
\end{cor}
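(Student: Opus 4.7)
The plan is to combine Remark~\ref{R:automatic properties}(f), which decomposes any $p$-automatic generalized power series into a sum of products of a $p$-automatic power series and a $p$-automatic series supported on $[0,1)$, with Theorem~\ref{T:gen Christol}, which identifies $p$-automatic elements of $K\llbracket t \rrbracket$ with elements algebraic over $K(t)$. The reduction to these inputs proceeds by matching parameters between the two factors.

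First I would show that after enlarging parameters (replacing $a$ by a common multiple and $b$ by a sufficiently large integer; the renormalized series transforms as $\tilde{x} \mapsto t^{b' - db} \tilde{x}(t^d)$, which preserves $p$-automaticity by Remark~\ref{R:automatic properties}(a) and Lemma~\ref{L:any ab}), the given $y \in K((t^{\QQ}))_{\aut} \cap K((t))$ and $x \in K((t^{\QQ}))_{\aut}$ admit common parameters $(a, b)$ in the sense of Definition~\ref{D:p-quasi-automatic}. Set $\tilde{x} = \sum_j x_{(j-b)/a} t^j$ and $\tilde{y} = \sum_j y_{(j-b)/a} t^j$; both are $p$-automatic, and $\tilde{y} \in K\llbracket t \rrbracket$ because $y \in K((t))$ forces the indices appearing in $\tilde{y}$ to be integers. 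A direct Cauchy-product computation confirms that $xy$ is supported inside $\{(k - 2b)/a : k \in \ZZ[p^{-1}]_{\geq 0}\}$ and that its renormalized companion with parameters $(a, 2b)$ equals $\tilde{x}\tilde{y}$. So it suffices to prove that $\tilde{x}\tilde{y}$ is $p$-automatic.

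For this reduced problem I would invoke the converse direction of Remark~\ref{R:automatic properties}(f) to write $\tilde{x} = \sum_k u_k v_k$ with each $u_k \in K\llbracket t\rrbracket$ $p$-automatic and each $v_k$ $p$-automatic with support in $[0,1)$, so that $\tilde{x}\tilde{y} = \sum_k (\tilde{y} u_k) v_k$. Each factor $\tilde{y} u_k$ lies in $K\llbracket t \rrbracket$ and is a product of two elements algebraic over $K(t)$ by the ``only if'' direction of Theorem~\ref{T:gen Christol}, hence is itself algebraic over $K(t)$ and thus $p$-automatic by the ``if'' direction of the same theorem. The forward direction of Remark~\ref{R:automatic properties}(f) then yields that $\tilde{x}\tilde{y}$ is $p$-automatic, completing the proof.

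The conceptual content of the argument is the interlocking of Remark~\ref{R:automatic properties}(f), which translates multiplication in $K((t^{\QQ}))_{\aut}$ into multiplication in $K\llbracket t\rrbracket$, with Theorem~\ref{T:gen Christol}, which in turn translates $K\llbracket t\rrbracket$-multiplication into multiplication in the integral closure of $K(t)$, where closure under multiplication is automatic. The main obstacle is the initial parameter-matching bookkeeping, which requires verifying that the renormalized companions $\tilde{x}$ and $\tilde{y}$ can be placed in a common indexing scheme without losing $p$-automaticity; everything else is a clean application of previously established machinery.
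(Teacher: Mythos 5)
Your proof is correct and follows essentially the same route as the paper's: reduce to the $p$-automatic case via Lemma~\ref{L:any ab} (you spell out the parameter-matching the paper leaves implicit), decompose $\tilde{x}$ via Remark~\ref{R:automatic properties}(f), push multiplication through Theorem~\ref{T:gen Christol} on the $K\llbracket t\rrbracket$ factors, and reassemble with the forward direction of Remark~\ref{R:automatic properties}(f). The only difference is expository: you make the renormalization bookkeeping and the "$yy_i$ is $p$-automatic because the integral closure of $K(t)$ in $K((t))$ is a ring" step fully explicit, whereas the paper compresses both.
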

\begin{proof}
By Remark~\ref{R:automatic properties}(a), $K((t^{\QQ}))_{\aut}$ is closed under addition.
By Theorem~\ref{T:gen Christol}, $K((t^{\QQ}))_{\aut} \cap K((t))$ is a ring (namely the integral closure of $K(t)$ in $K((t))$). It thus remains to verify that 
for $x \in K((t^{\QQ}))_{\aut}$, $y \in K((t^{\QQ}))_{\aut} \cap K((t))$,
we have $xy \in K((t^{\QQ}))_{\aut}$. To check this, we may apply 
Lemma~\ref{L:any ab} to reduce to the case where $x,y$
are both $p$-automatic. As in Remark~\ref{R:automatic properties}(f),
we may express $x$ as a finite linear combination $\sum_i y_i z_i$ in which
$y_i \in K((t))$ is $p$-automatic, and $z_i$ is $p$-automatic and supported on $[0,1)$;
by the same remark, $xy = \sum_i (yy_i) z_i$ is $p$-automatic.
\end{proof}

We append a remark suggested by David Speyer in \cite{speyer}.
\begin{remark} \label{R:Cartier operator}
Note that $\Omega_L$ (the module of absolute K\"ahler differentials) is a one-dimensional vector space generated by $dt$. It was shown by Tate \cite{tate} that the operator
$\mathcal{C}: \Omega_{L/K} \to \Omega_{L/K}$ taking $x\,dt$ to $s_{p-1}(x)\,dt$ is independent of the choice of the local coordinate $t$. This map is called the \emph{Cartier operator} and is of great importance in characteristic-$p$ algebraic geometry;
it can be even used to control the complexity (i.e., the number of states)
of the automata arising in Christol's theorem,
as shown by Bridy \cite{bridy}. However, we will have no further use for it here.
\end{remark}

\begin{remark} \label{R:recover Christol}
Let $\FF$ be a finite field. By Theorem~\ref{T:gen Christol},
an element $x = \sum_{i=0}^\infty x_i t^i \in \FF \llbracket t \rrbracket$ is integral over $\FF(t)$ if and only if it is $p$-quasi-automatic.
By Lemma~\ref{L:compatibility}, $x$ is $p$-quasi-automatic if and only if the function
$L_p \to \FF$ taking $s$ to $x_{\|s\|}$ is $p$-automatic.
That is, Theorem~\ref{T:gen Christol} does indeed generalize Christol's theorem
(Theorem~\ref{T:christol}).
\end{remark}

\section{Automaticity of truncated algebraic series}

We now extend the ``algebraic implies automatic'' implication of Theorem~\ref{T:gen Christol} from power series to truncations of generalized power series. As in the incorrect argument given in \cite{k-series1} (see Remark~\ref{R:not TR}), the key calculation here involves Artin-Schreier extensions; while this calculation can be made directly, we prefer to use a trick to reduce to the case of ordinary power series.
\begin{lemma} \label{L:automatic AS}
If $x \in K((t^{\QQ}))_{\aut}, y \in K((t^{\QQ}))$ satisfy $y^p -y =x$, then any truncation of $y$ is $p$-quasi-automatic.
\end{lemma}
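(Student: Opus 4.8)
The plan is to reduce the Artin--Schreier calculation to the already-established power series case (Theorem~\ref{T:gen Christol}, or more precisely its automatic half), exactly as the lemma's preamble advertises. First I would observe that it suffices to treat two extreme cases: when $x$ is supported on $(-\infty,0)$ and when $x$ is supported on $[0,\infty)$ (intersected with the relevant set $\{(i-b)/a : i \in \ZZ[p^{-1}]_{\geq 0}\}$), since a general $p$-automatic $x$ splits as a sum of such pieces by Remark~\ref{R:automatic properties}(e), and the solution $y$ splits correspondingly (the ``negative'' part of $y$ is forced by the negative part of $x$ up to a constant, and similarly for the positive part). In each case one writes down the canonical solution of $y^p - y = x$: for $x$ supported on negative exponents one takes $y = -\sum_{n\geq 1} \varphi^{-n}(x) = -\sum_{n\geq 1} x^{1/p^n}$ composed with $t \mapsto t^{1/p^n}$, which converges because the support is well-ordered and negative; for $x$ supported on nonnegative exponents one takes $y = \sum_{n\geq 0}\varphi^n(x)$-type telescoping, i.e. the solution lying in $K((t^{\QQ}))$ with nonnegative support.

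Second, in the negative-support case I would exploit the substitution trick: after applying $t \mapsto t^{1/a}$ and $t \mapsto \mu t$ normalizations (legitimate by Remark~\ref{R:automatic properties}(c),(d)), and after truncating (Remark~\ref{R:automatic properties}(e), using Lemma~\ref{L:accumulation point} to land at a rational truncation point), one is reduced to showing that $y = -\sum_{n\geq 1} x^{1/p^n}(t^{1/p^n})$ is $p$-quasi-automatic when $x$ is $p$-automatic and supported on $(-1,0)\cap \ZZ[p^{-1}]$-shifts. The point is that passing from $x$ to $x^{1/p^n}\circ(t\mapsto t^{1/p^n})$ is a $\varphi^{-1}$-twist together with a rescaling of exponents by $p^n$, which at the level of base-$p$ strings is a shift operation; the sum over $n$ of these shifted copies can be assembled into a single $\varphi$-biautomatic function by enlarging the vector space $V'$ to accommodate the ``carry'' of where the radix point sits. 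Concretely, I would build the biautomatic datum for $y$ directly from that of $x$: the composed functions $f_1, f_2$ for $y$ act on $V' \oplus V'$ (or on $V'$ with one extra dimension tracking the index $n$), implementing ``read the string for $y$, which up to a prefix of zeros is the string for some $x^{1/p^n}$, and output accordingly.''

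Third, for the nonnegative-support case the reduction to ordinary power series is even cleaner: the solution $y$ with nonnegative support of $y^p - y = x$ is, up to the $t \mapsto t^{1/a}$ normalization, again supported on $\ZZ[p^{-1}]_{\geq 0}$, so it is a genuine ``generalized power series with $p$-power denominators'' and one wants it to be $p$-automatic. Here I would use the Artin--Schreier identity at the level of the semilinear operators $g_1, g_2$ from the proof of Lemma~\ref{L:automatic to algebraic}: if $g$ is the formal-sum operator $\sum_{s} f(s)$ attached to $x$, then the operator attached to the desired $y$ satisfies a fixed-point equation of Artin--Schreier type, $h - \varphi(h) \circ(\text{shift}) = g$ or similar, to which Lemma~\ref{L:semilinear solution} applies to force $h$ to be defined over the small field, hence $y$ to be automatic. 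Alternatively, and perhaps more transparently, I would invoke Theorem~\ref{T:gen Christol} by noting $y$ is algebraic over $K(t)$ (it satisfies the degree-$p$ equation $Y^p - Y = x$ with $x$ algebraic) \emph{together with} the support constraint, and then appeal to the decimation argument of Theorem~\ref{T:gen Christol} applied to the curve governing $y$; but this requires knowing a priori that $y$'s support is of the required shape, which it is by inspection.

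The main obstacle I anticipate is the negative-support case and specifically the bookkeeping that makes the infinite sum $-\sum_{n\geq 1} x^{1/p^n}(t^{1/p^n})$ into a \emph{single} finite-dimensional biautomatic datum --- this is exactly the spot where the original argument in \cite{k-series1} (see Remark~\ref{R:not TR}) went wrong, because it tried to glue together sequences obtained by taking $p^k$-th roots for unboundedly large $k$ and invoked \cite[Corollary~5]{k-series1} illegitimately. The semilinear-algebra framework is designed precisely to repair this: taking a $p^n$-th root is not an operation on sequences but a single semilinear map $\varphi^{-1}$ applied $n$ times, and the family $\{\varphi^{-n}\}$ lives inside the fixed monoid $\End_{\varphi^{-1}}(V')$-powers, so no uniform bound on $n$ is needed --- the biautomatic structure absorbs all $n$ at once. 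I would therefore spend most of the proof carefully exhibiting $V'$, $\iota$, $\pi$, $f_1$, $f_2$ for $y$ and checking the defining identity $y_{\|s\|} = \pi\circ f_1(\rev(s_1))\circ f_2(s_2)\circ\iota$, with the verification that well-orderedness of the support of $y$ (inherited from that of $x$ via Lemma~\ref{L:accumulation point} and the structure of $S_{a,b,c}$) is consistent with this datum.
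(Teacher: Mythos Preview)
Your overall decomposition into positive- and negative-support cases matches the paper, and your positive-support argument is essentially correct (though overcomplicated: the paper simply notes that $y = c - x - x^p - \cdots$ and only finitely many summands contribute to any given truncation, so there is nothing to do; invoking Theorem~\ref{T:gen Christol} here is unnecessary and in fact inapplicable since $y$ need not lie in $K\llbracket t\rrbracket$).

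The substantive divergence is in the negative-support case, and here you miss the paper's key idea. Rather than building a biautomatic datum for $y$ directly, the paper uses a \emph{reversal trick}: after normalizing so that $\tilde{x} = tx$ is $p$-automatic and supported on $(0,1)$, and setting $\tilde{y} = t^{1/p} y^{1/p}$, the Artin--Schreier relation becomes $\tilde{y}^p - t^{(p-1)/p}\tilde{y} = \tilde{x}$. The crucial observation is that this equation involves \emph{no carries} in base-$p$ arithmetic on exponents, so one may formally reverse the fractional base-$p$ expansions to obtain ordinary power series $X, Y \in K\llbracket t\rrbracket$ (via $X_{|s|} = \tilde{x}_{\|.\rev(s)\|}$, etc.) satisfying $Y^{1/p} - t^{p-1}Y = X$. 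Now $X$ is $p$-automatic (by Corollary~\ref{C:automatic reverse}), hence algebraic over $K(t)$, hence so is $Y$; Theorem~\ref{T:gen Christol} then gives that $Y$ is $p$-automatic, and reversing again gives the result for $\tilde{y}$.

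Your alternative---assembling $\sum_{n\geq 1}\varphi^{-n}(x)$ into a single biautomatic datum by enlarging $V'$---is what the paper's preamble calls the ``direct'' approach, and the paper explicitly declines to pursue it. You assert that ``the biautomatic structure absorbs all $n$ at once,'' but you do not exhibit the enlarged space, the maps $f_1,f_2$, or verify the defining identity; this is precisely the step where the original argument in \cite{k-series1} went wrong, so a hand-wave is not enough. The reversal trick sidesteps the entire issue by reducing to the already-proved power-series case, which is both shorter and safer.
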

\begin{proof}
By Lemma~\ref{L:any ab} and Remark~\ref{R:automatic properties}(e), we may assume that the support of $x$ is contained in either $(-\infty,0)$ or $(0, \infty)$. In the latter case, we have
\[
y = c - x - x^{p} - \cdots
\]
for some $c \in \FF_p$; since only finitely many summands contribute to any truncation, the claim is clear.

So let us suppose hereafter that $x$ is supported on $(-\infty, 0)$. By Lemma~\ref{L:any ab}, we may further reduce to the case that $tx$ is $p$-automatic and $x$ is supported on $(-1,0)$. Then
\[
y = x^{1/p} + x^{1/p^2} + \dots + c
\]
for some $c \in \FF_p$; there is no harm in assuming that $c=0$. Put
$\tilde{x} = t x$
and $\tilde{y} = t^{1/p} y^{1/p}$, so that $\tilde{x}$ is supported on $(0,1) \cap \ZZ[p^{-1}]$, $\tilde{y}$ is supported on $(0, 1/p) \cap \ZZ[p^{-1}]$, and 
\begin{equation} \label{eq:as1}
\tilde{y}^p - t^{(p-1)/p} \tilde{y}  = \tilde{x}.
\end{equation}
Write $\tilde{x} = \sum_i \tilde{x}_i t^i$, $\tilde{y} = \sum_i \tilde{y}_i t^i$,
and define $X, Y \in K \llbracket t \rrbracket$ by the formulas
\[
X_{\left| s \right|} = \tilde{x}_{\left\| .\rev(s) \right\|},
\qquad
Y_{\left| s \right|} = \tilde{y}_{\left\| .\rev(s) \right\|}
\qquad (s \in L_p^0).
\]
By Lemma~\ref{L:any ab}
 and Corollary~\ref{C:automatic reverse}, $X$ is $p$-automatic.
Now note that in \eqref{eq:as1}, the computation of the left side involves no carries in base $p$-arithmetic; we may thus formally reverse to deduce the new identity
\[
Y^{1/p} - t^{p-1} Y = X.
\]
By Theorem~\ref{T:gen Christol}, $Y$ is $p$-automatic; by Lemma~\ref{L:any ab} and Corollary~\ref{C:automatic reverse} again, $\tilde{y}$ is $p$-automatic.
\end{proof}
\begin{cor} \label{C:automatic AS}
Let $P(T) \in K[T]$ be a nonzero polynomial.
If $x,y \in K (( t^{\QQ} ))$ are such that $x$ is $p$-quasi-automatic and $P(\varphi)(y) = x$, then every truncation of $y$ is $p$-quasi-automatic.
\end{cor}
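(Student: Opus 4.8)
The plan is to factor $P(\varphi)$ over $K$ into linear factors and proceed by induction on $\deg P$, reducing each step to an application of Lemma~\ref{L:automatic AS}. The base case $\deg P = 0$ is trivial: then $P(\varphi)$ is multiplication by a nonzero scalar, so $y$ is a $K$-multiple of $x$, hence $p$-quasi-automatic by Remark~\ref{R:automatic properties}(a).

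For the inductive step, write $P(T) = (T - \lambda) Q(T)$ with $\lambda \in K^\times$ (we may assume $\lambda \neq 0$; if $\lambda = 0$, i.e.\ $P(T) = T Q(T)$, then $\varphi(Q(\varphi)(y)) = x$, and since $\varphi$ is invertible on $K((t^{\QQ}))$ and $\varphi^{-1}$ preserves $p$-quasi-automaticity by Remark~\ref{R:automatic properties}(d), we may replace $x$ by $\varphi^{-1}(x)$ and continue with $Q$). Set $z = Q(\varphi)(y)$, so that $(\varphi - \lambda)(z) = x$, i.e.\ $z^p - \lambda z = x$. After the substitution $t \mapsto \mu t$ for a suitable choice of $\mu$ with $\mu = \lambda^{-1/(p-1)}$ together with a coherent sequence of roots --- which preserves $p$-quasi-automaticity by Remark~\ref{R:automatic properties}(c) --- this becomes an equation of Artin-Schreier type $w^p - w = (\text{something } p\text{-quasi-automatic})$ up to rescaling; more directly, one observes that $\lambda z$ satisfies $(\lambda z)^p - \lambda(\lambda z) = \lambda^p \cdot \lambda^{-(p-1)} \cdot (\lambda z) \cdot (\ldots)$, so it is cleanest to rescale the unknown. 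In any case, Lemma~\ref{L:automatic AS} (or a routine rescaled variant of it) shows that every truncation of $z$ is $p$-quasi-automatic. Now apply the inductive hypothesis to $Q(\varphi)(y) = z$: but here lies the subtlety, since we only know truncations of $z$ are $p$-quasi-automatic, not $z$ itself.

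To handle this, I would strengthen the induction to a statement about truncations throughout, using the fact that a truncation $\sum_{i < r} z_i t^i$ of $z$ again satisfies an Artin-Schreier-type relation modulo a truncation of $x$, or alternatively invoke the following observation: if every truncation of $z$ is $p$-quasi-automatic and $Q(\varphi)(y) = z$, then fixing a truncation threshold $r$ and choosing $r'$ large enough (depending on $r$ and $\deg Q$ via the supports involved), the truncation $y_{<r}$ is determined by $z_{<r'}$ through the relation $Q(\varphi)(y) = z$, so $y_{<r}$ is a truncation of a solution to $Q(\varphi)(y') = z_{<r'}$ where $z_{<r'}$ is genuinely $p$-quasi-automatic. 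Applying the inductive hypothesis to this genuine equation gives that $y'_{<r}$, hence $y_{<r}$, is $p$-quasi-automatic. The bookkeeping that $y_{<r}$ really does agree with $y'_{<r}$ for suitable $r'$ rests on the well-orderedness of supports (so that only finitely many "carries" propagate) together with Lemma~\ref{L:accumulation point} to ensure the relevant suprema are rational.

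The main obstacle is precisely this passage from "truncations of $z$ are automatic" back up through one more application of $Q(\varphi)$: one cannot simply feed $z$ into the inductive hypothesis because $z$ need not lie in $K((t^{\QQ}))_{\aut}$. The resolution is to carry truncations through the entire induction and to verify carefully that applying $Q(\varphi)$ to a high-enough truncation of $y$ matches a truncation of $z$ --- this is where the specific structure of $\varphi$-semilinearity and the finiteness of support-interaction (again via well-orderedness) does the work. I expect the remaining steps --- the rescaling to put each linear factor into standard Artin-Schreier form, and the reductions via Remark~\ref{R:automatic properties} --- to be routine.
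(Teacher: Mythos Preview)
Your overall strategy---factor $P(T)$ into linear factors over $K$, handle each linear factor via Lemma~\ref{L:automatic AS}, and carry truncations through the induction by an approximation argument---is exactly the paper's approach: the paper invokes \cite[Corollary~6.4.6]{k-series-automata} for the reduction to the linear case and phrases the approximation step as ``replacing $x$ with a sufficiently close approximation (using the fact that the roots of a polynomial vary continuously with the coefficients).'' You have correctly isolated the one genuine subtlety, namely that after peeling off one linear factor you only know that \emph{truncations} of $z = Q(\varphi)(y)$ are $p$-quasi-automatic, and your resolution (replace $z$ by a high truncation $z_{(r')}$, solve $Q(\varphi)(y') = z_{(r')}$, and use continuity of roots to ensure $y'_{(r)} = y_{(r)}$) is the correct one.

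Two points of execution to clean up. First, the substitution $t \mapsto \mu t$ does \emph{not} convert $z^p - \lambda z = x$ into standard Artin--Schreier form: that substitution is an automorphism of $K((t^{\QQ}))$ commuting with $\varphi$, so it leaves the coefficient $\lambda$ unchanged. What you want, as you eventually say, is to rescale the unknown: setting $w = \nu z$ with $\nu^{p-1} = \lambda^{-1}$ gives $w^p - w = \nu^p x$, to which Lemma~\ref{L:automatic AS} applies directly (and $p$-quasi-automaticity of $\nu^p x$ and of $z = \nu^{-1} w$ follows from Remark~\ref{R:automatic properties}(a)). Second, Lemma~\ref{L:accumulation point} is not needed here: you may simply truncate at rational $r'$ throughout, and continuity of roots (for the separable polynomial $\sum_j a_j T^{p^j} - z_{(r')}$, using say \cite[Lemma~8]{k-series2}) guarantees that some root $y'$ satisfies $v_t(y-y') \to \infty$ as $r' \to \infty$, which is all you need.
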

\begin{proof}
In case $P(T) = T - \mu$ for some $\mu \in K$, this follows from Lemma~\ref{L:automatic AS} upon replacing $x$ with a sufficiently close approximation (using the fact that the roots of a polynomial vary continously with the coefficients, as in \cite[Lemma~8]{k-series2}).
The general case reduces to this case via \cite[Corollary~6.4.6]{k-series-automata}.
\end{proof}

The following result is analogous to \cite[Proposition~7.3.3]{k-series-automata}, although the proof is more in the spirit of the transfinite Newton algorithm described in
\cite[Proposition~1]{k-series2}.
\begin{lemma} \label{L:truncated algebraic to automatic}
Suppose that $x = \sum_i x_i t^i \in K((t^{\QQ}))$ is supported on $\ZZ[p^{-1}]_{\geq 0}$ and is integral over $K((t))$. Then every truncation of $x$ is $p$-automatic.
\end{lemma}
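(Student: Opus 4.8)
The plan is to run a transfinite Newton-type induction on the support of $x$, peeling off one root at a time and using the results already established about Artin--Schreier-type equations.

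First I would set up the situation: since $x$ is integral over $K((t))$, it satisfies a monic polynomial $P(x)=0$ with coefficients in the integral closure of $K[[t]]\cap K((t^\QQ))$, which by Theorem~\ref{T:gen Christol} and the closure properties in Remark~\ref{R:automatic properties} consists of $p$-automatic power series (after clearing denominators in $t$, using Lemma~\ref{L:any ab}). The idea is to mimic the transfinite Newton algorithm of \cite{k-series2}: examine the lowest exponent $v$ in the support of $x$ not yet accounted for, determine the corresponding ``leading term'' by looking at the Newton polygon of $P$ over the field $K((t^\QQ))$, and subtract it off. Over characteristic $p$ the new wrinkle is that the edge of the Newton polygon may force not an ordinary Puiseux-type term but an Artin--Schreier (or iterated Artin--Schreier, i.e.\ $P(\varphi)$-type) contribution; this is exactly the phenomenon handled by Corollary~\ref{C:automatic AS}.

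The key steps, in order: (1) Reduce, via Lemma~\ref{L:any ab} and Remark~\ref{R:automatic properties}(e), to showing that for every $r \in \QQ$ the $r$-truncation $x_{<r}$ is $p$-automatic, and note by Lemma~\ref{L:accumulation point}-type reasoning that it suffices to treat $r \in \QQ$. (2) Induct on $r$ (transfinitely, along the well-ordered support): assume the truncation $x_{<r}$ is $p$-automatic, write $x = x_{<r} + z$ where $z$ is supported on $[r,\infty)$, and substitute into $P$ to get a polynomial equation $Q(z) = 0$ whose coefficients are $p$-quasi-automatic (using that $x_{<r}$ is automatic, Theorem~\ref{T:gen Christol}, and the ring/module structure). (3) Analyze the lowest-order behavior of $Q$: the possible leading exponents of $z$ form a finite set determined by the Newton polygon, and the possible leading terms are roots of a ``residual'' equation which is either a separable polynomial (giving an honest $t^{r}$-term, automatic since $K$ is algebraically closed and there are finitely many choices) or an additive polynomial in $\varphi$, in which case Corollary~\ref{C:automatic AS} shows the relevant piece of $z$ is $p$-quasi-automatic. (4) Peel off this piece, obtaining a new series whose support starts strictly above $r$ and which still satisfies a polynomial over the $p$-quasi-automatic series; since the support of $x$ is well-ordered, this process, iterated transfinitely, exhausts any given truncation in finitely many steps past any fixed $r$, and each step preserves $p$-automaticity by the closure properties.

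The main obstacle I anticipate is step (3)--(4): controlling uniformly that the finitely many Newton-polygon slopes and the degrees of the residual additive polynomials stay bounded as one runs through the transfinite induction, so that one does not need ``$p^k$-th roots for unbounded $k$'' — precisely the gap that sank \cite[Theorem~8]{k-series1} as explained in Remark~\ref{R:not TR}. The resolution should be that the polynomial $P$ (or rather the finitely many derived polynomials $Q$, whose shapes are governed by $P$ and finitely many automatic power series) has bounded degree, so the Newton polygon has boundedly many edges of bounded denominator and the additive residual polynomials have bounded degree throughout; combined with Corollary~\ref{C:automatic AS}, which already packages the bounded iterated-Artin--Schreier case, this should close the argument. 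A secondary technical point is verifying that at each stage the ``remainder'' series is again supported on a set of the form $\{(i-b)/a : i \in \ZZ[p^{-1}]_{\geq 0}\}$ so that the induction stays within the framework of Definition~\ref{D:p-quasi-automatic}; this follows from the support restriction on $x$ and the fact that all operations used (truncation, subtraction of automatic terms, extracting $p$-power roots a bounded number of times) preserve it.
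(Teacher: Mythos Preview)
Your proposal has a genuine gap at the limit step of the transfinite induction. You assert that ``this process, iterated transfinitely, exhausts any given truncation in finitely many steps past any fixed $r$,'' but this is false: the support of $x$ can accumulate from below at a point $r$ (e.g., a support like $\{1-p^{-n}: n\ge 1\}\cup\{1\}$), so reaching the $r$-truncation requires genuinely transfinitely many of your successor steps. At such a limit, your inductive hypothesis gives only that $x_{<r'}$ is $p$-automatic for every $r'<r$, each via its own (possibly ever-larger) vector space $V$; nothing lets you pass to the limit and conclude that $x_{<r}$ itself is $p$-automatic. This is precisely the point where an argument must do real work, and your successor step (peel off one term or one Artin--Schreier block) does not touch it.

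The paper handles exactly this limit issue, and does so by two moves you are missing. First, rather than a forward transfinite induction, it lets $S=\{r: x_{(r)}\text{ is $p$-automatic}\}$, assumes $r=\sup S<\infty$, and derives a contradiction by showing $r\in S$ directly. Second---and this is the key device---it invokes Ore's lemma to obtain a $\varphi$-polynomial relation $P(\varphi)(x)=\sum_j P_j x^{p^j}=0$ with $P_j\in K((t))$. Because Frobenius is additive, the $c$-truncation of each $P_j x^{p^j}$ decomposes as $Q_j t^{c-rp^j}(x_{(r)})^{p^j}$ plus a \emph{finite} $K$-linear combination of (powers of) truncations $x_{(r'')}$ with $r''<r$, hence automatic by hypothesis. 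Summing yields a single equation $Q(\varphi)(t^{-r}x_{(r)})=t^{-c}y$ with $y$ $p$-automatic, and Corollary~\ref{C:automatic AS} finishes. Your use of an ordinary minimal polynomial would not give this: substituting $x=x_{<r}+z$ produces cross terms $x_{<r}^a z^b$, and the residual equation at a multiple root is not in general an additive $\varphi$-polynomial, so your claimed dichotomy ``separable or additive'' is not valid. (Incidentally, the coefficients $P_j$ live only in $K((t))$ and need not be $p$-automatic; your appeal to Theorem~\ref{T:gen Christol} for them is unjustified, though the paper's argument shows this is harmless.)
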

\begin{proof}
For $r \in \RR$, let $x_{(r)}$ denote the $r$-truncation of $x$.
Let $S$ be the set of $r \in \RR$ for which $x_{(r)}$ is $p$-automatic.
Since $x$ has well-ordered support, $S$ is nonempty and does not contain a largest element. It thus suffices to deduce a contradiction assuming that $S$ admits a finite supremum $r$ (not necessarily in $\QQ$), which necessarily does not belong to $S$.

By Ore's lemma (see \cite[Lemma~12.2.3]{allouche-shallit} or \cite[Lemma~3.3.4]{k-series-automata}), there exists a monic polynomial $P(T) = \sum_{j=0}^d P_j T^j$ with coefficients in $K((t))$ such that
$P(\varphi)(x) = 0$. Let $v_t$ denote the $t$-adic valuation on $K((t))$ and put $c = \min_j \{v_t(P_j) +rp^j\}$. 
For $j=0,\dots,d$, let $Q_j$ be the coefficient of $t^{c-rp^j}$ in $P_j$
(interpreted as $0$ if $c - rp^j \notin \ZZ$)
and put
$Q(T) = \sum_{j=0}^d Q_j T^j$; by the definition of $c$, $Q(T) \neq 0$.

By writing the $c$-truncation of $P_j x^{p^j}$ as 
$\sum_{m \geq c-rp^j} P_{j,m} t^m (x_{((c-m)/p^j)})^{p^j}$,
we see that it has the form $y_j + z_j$ where
$y_j = Q_j t^{c-rp^j} (x_{(r)})^{p^j}$ (again interpreted as $0$ if $c-rp^j \notin \ZZ$) and $z_j$ is $p$-automatic. Put $y = \sum_{j=0}^d y_j$; since $P(\varphi)(x) = 0$, $y$ is $p$-automatic.
If $Q$ consists of a single monomial, then $y = Q_j t^{c-rp^j} (x_{(r)})^{p^j}$ for some $j$ and so $x_{(r)}$ is $p$-quasi-automatic; otherwise, $r,c \in \QQ$ 
and $t^{-c} y = Q(\varphi)(t^{-r} x_{(r)})$, so
Corollary~\ref{C:automatic AS} implies that $x_{(r)}$ is $p$-quasi-automatic. In either case, we have $r \in S$, a contradiction.
\end{proof}
\begin{cor} \label{C:truncated algebraic to automatic}
If $x \in K((t^{\QQ}))$ is integral over $K((t))$ and has bounded support, then $x$ is $p$-quasi-automatic.
\end{cor}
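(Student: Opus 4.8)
The plan is to reduce to Lemma~\ref{L:truncated algebraic to automatic} plus the stability properties recorded in Remark~\ref{R:automatic properties}. Suppose $x \in K((t^{\QQ}))$ is integral over $K((t))$ with bounded support, say supported on $[N_1, N_2]$ for integers $N_1 \le N_2$. First I would normalize the support: after multiplying by $t^{-N_1}$ (which is invertible in $K((t))$, hence preserves both integrality over $K((t))$ and, by Remark~\ref{R:automatic properties}(a) applied after Lemma~\ref{L:any ab}, the property of being $p$-quasi-automatic) we may assume $x$ is supported on $[0, \infty)$. The content of the corollary is then that such an $x$ is actually $p$-quasi-automatic, and it does \emph{not} follow merely from ``every truncation is automatic'': one must control the tail. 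This is exactly where bounded support is used.

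The key point is that if $x$ has bounded support and is algebraic over $K((t))$, then its support, being a bounded well-ordered subset of $\QQ$, actually accumulates only at finitely many points, and — crucially — the denominators appearing in the support are bounded. I would argue as follows. Since $x$ is integral over $K((t))$, it lies in a finite extension $L$ of $K((t))$; by the structure theory of such extensions (Puiseux-type: $L$ is contained in a field of the form $K((t^{1/p^k}))$ composed with a tamely ramified piece), the support of $x$ is contained in $\frac1a \ZZ[p^{-1}]$ for some positive integer $a$ coprime to $p$. Combined with boundedness of the support, after applying the substitution $t \mapsto t^{1/a}$ (legitimate by Remark~\ref{R:automatic properties}(d), which also preserves integrality over $K((t))$ suitably interpreted) we may assume the support of $x$ is contained in $\ZZ[p^{-1}]_{\ge 0}$ and is bounded above, say by an integer $M$. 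Then $x = x_{(M+1)}$ is itself an $(M+1)$-truncation of $x$ — nothing has been thrown away — and Lemma~\ref{L:truncated algebraic to automatic} applies directly to give that $x$ is $p$-automatic, hence $p$-quasi-automatic.

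The step I expect to be the main obstacle is establishing that an element of $K((t^{\QQ}))$ with bounded support which is integral over $K((t))$ has support with bounded $p$-power denominators (equivalently, lies in $K((t^{1/N}))$ for some $N$). For $K = \overline{\FF}_p$ this is classical, but over a general algebraically closed $K$ of characteristic $p$ one wants to invoke the description of the integral closure of $K((t))$ in $K((t^{\QQ}))$ — yet that is precisely Theorem~\ref{T:main}, which has not yet been proved at this point in the paper. To avoid circularity I would instead argue directly: an element integral over $K((t))$ satisfies a monic polynomial with coefficients in $K((t))$, and one can run the transfinite Newton-polygon analysis (as in \cite[Proposition~1]{k-series2}) to see that, the support being bounded, only finitely many ``Newton slopes'' are encountered, each forcing a bounded denominator, and the Artin-Schreier steps — the only source of unbounded denominators in characteristic $p$ — cannot contribute when the support is bounded above because the Artin-Schreier construction $y = x^{1/p} + x^{1/p^2} + \cdots$ produces support heading toward $0$ from a negative value, which is incompatible with a fixed positive lower bound on the support after the normalization above. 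Making this last point precise is the delicate part, but once it is in hand the reduction to Lemma~\ref{L:truncated algebraic to automatic} is immediate.
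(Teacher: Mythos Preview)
Your overall strategy coincides with the paper's: reduce to the case where $x$ is supported on $\ZZ[p^{-1}]_{\geq 0}$, and then observe that a series with bounded support equals one of its own truncations, so Lemma~\ref{L:truncated algebraic to automatic} applies directly. The paper executes this in two lines, simply writing ``By Lemma~\ref{L:any ab}, we may reduce to the case where $x$ is supported on $\ZZ[p^{-1}]_{\geq 0}$,'' and you are right to notice that this reduction silently uses the fact that the support already lies in $\tfrac{1}{a}\ZZ[p^{-1}]$ for some $a$.

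However, your attempts to justify that support condition are both flawed. In your middle paragraph you assert that a finite extension $L$ of $K((t))$ ``is contained in a field of the form $K((t^{1/p^k}))$ composed with a tamely ramified piece''; this is false, since it would force $L \subseteq K((t^{1/N}))$ for some $N$, whereas already an Artin--Schreier root of $t^{-1}$ has support $\{-p^{-m}: m \geq 1\}$ and lies in no such field. In your final paragraph you then change the target to ``bounded $p$-power denominators (equivalently, lies in $K((t^{1/N}))$ for some $N$)'', which is the \emph{wrong} statement --- what is needed is support in $\tfrac{1}{a}\ZZ[p^{-1}]$, which \emph{allows} unbounded $p$-power denominators --- and your Artin--Schreier argument tries to rule out exactly the contributions that are permitted. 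The boundedness of the support of $x$ plays no role whatsoever in the support claim; it is used only at the very end to identify $x$ with one of its truncations.

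The correct (and classical) argument is the one your middle paragraph almost reaches: write $L/K((t))$ as a tame extension $K((t^{1/e}))$ (with $(e,p)=1$) followed by a tower of Artin--Schreier extensions. The tame step has support in $\tfrac{1}{e}\ZZ$. For an Artin--Schreier step $y^p - y = g$ with $g$ supported in $\tfrac{1}{e}\ZZ[p^{-1}]$, the explicit formula $y = \sum_{m \geq 1} g^{1/p^m}$ (plus a constant) again has support in $\tfrac{1}{e}\ZZ[p^{-1}]$, since that group is closed under both addition and division by $p$; products and sums of such elements stay there too. This handles the entire tower, giving support in $\tfrac{1}{e}\ZZ[p^{-1}]$ for every element of $L$, with no circularity and no appeal to Theorem~\ref{T:main}.
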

\begin{proof}
By Lemma~\ref{L:any ab}, we may reduce to the case where $x$ is supported on $\ZZ[p^{-1}]_{\geq 0}$. We may then deduce the claim from
Lemma~\ref{L:truncated algebraic to automatic}.
\end{proof}

\begin{remark} \label{R:subring}
At this point, one can use Corollary~\ref{C:module for series} and Corollary~\ref{C:truncated algebraic to automatic} to deduce that $K((t^{\QQ}))_{\aut}$ is a subring of $K((t^{\QQ}))$. However, we will not use this explicitly.
\end{remark}

\section{The main theorems}

At this point, it is merely a matter of tying up loose ends to establish our main results.
We start with the characterization of the integral closure of $K(t)$ in $K((t^{\QQ}))$.

\begin{lemma} \label{L:Krasner}
Suppose that $x = \sum_i x_i t^i \in K((t^{\QQ}))$ is integral over $K((t))$. Then for $r \in \RR$ sufficiently large, the $r$-truncation $y_{(r)}$ of $x$ is $p$-quasi-automatic,
is integral over $K(t)$, 
 and generates a finite extension of $K((t))$ containing $x$.
\end{lemma}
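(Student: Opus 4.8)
The plan is to combine the $p$-adic Newton polygon picture over $K((t))$ with the automaticity results already established for truncations. Since $x$ is integral over $K((t))$, by Ore's lemma there is a monic $P(T) = \sum_{j=0}^d P_j T^j$ with coefficients in $K((t))$ and $P(\varphi)(x) = 0$; alternatively, $x$ generates a finite extension $M$ of $K((t))$ of some degree $e$. The field $M$ is a Hahn series field whose support group is $\frac{1}{e}\ZZ$ at worst in the tame part, but more to the point, $M = K((t))[x]$ and Krasner's lemma tells us that any element of $K((t^{\QQ}))$ sufficiently $t$-adically close to $x$ also generates $M$ (and has $x$ in the extension it generates). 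So the first step is to invoke Krasner's lemma — in the Hahn-series setting, cf. \cite[Lemma~8]{k-series2} — to fix a rational threshold $\rho$ such that for every $r \geq \rho$ with $x_{(r)} \in K((t^{\QQ}))$, the difference $x - x_{(r)}$ has $t$-adic valuation large enough that $x_{(r)}$ generates the same extension $M$ of $K((t))$ and $x \in K((t))[x_{(r)}]$.

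Second, I would apply Lemma~\ref{L:truncated algebraic to automatic} (after using Lemma~\ref{L:any ab} to reduce to support in $\ZZ[p^{-1}]_{\geq 0}$) to conclude that $x_{(r)}$ is $p$-automatic, hence $p$-quasi-automatic; this holds for \emph{every} $r$, so in particular for our chosen large $r$. Third, $x_{(r)}$ lies in the finite extension $M$ of $K((t))$, and $M$ is integral over $K((t))$ which is in turn integral over — no, rather: $x_{(r)}$ has bounded support and generates a finite extension of $K((t))$, so it is algebraic over $K((t))$; to get integrality over $K(t)$ I would note that $x_{(r)}$ is $p$-automatic and invoke Lemma~\ref{L:automatic to algebraic}, which gives directly that any $p$-quasi-automatic series is integral over $K(t)$. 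Thus all three assertions — $p$-quasi-automatic, integral over $K(t)$, and generating a finite extension of $K((t))$ that contains $x$ — are in hand.

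The only genuine obstacle is the first step: making Krasner's lemma work in the Hahn-series context and extracting a \emph{rational} threshold $\rho$ rather than merely "sufficiently large real $r$." The issue is that the set of $r$ for which $x_{(r)}$ is close enough to $x$ is an interval $(\rho_0, \infty)$ whose left endpoint need not be rational, but since we only need \emph{some} sufficiently large $r$ and we may take $r$ to be any element of the (nonempty, cofinal) support of $x$, this causes no difficulty; one picks $r$ in the support of $x$ exceeding the Krasner bound. I expect the Krasner estimate itself to follow formally: $K((t))$ is Henselian for the $t$-adic valuation, its value group and residue structure are unchanged by the completion inside $K((t^{\QQ}))$, and the standard Krasner inequality $v_t(x - x') > v_t(x - \sigma(x))$ for all conjugates $\sigma(x) \neq x$ over $K((t))$ transfers verbatim; this is exactly the content invoked in \cite[Lemma~8]{k-series2}. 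Everything downstream is then a direct citation of Lemma~\ref{L:truncated algebraic to automatic} and Lemma~\ref{L:automatic to algebraic}.
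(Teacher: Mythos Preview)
Your approach matches the paper's: invoke Lemma~\ref{L:truncated algebraic to automatic} for $p$-quasi-automaticity of truncations, then Lemma~\ref{L:automatic to algebraic} for integrality over $K(t)$, and finally Krasner's lemma for the extension containment. The mention of Ore's lemma is unnecessary, and the worry about extracting a rational threshold is a red herring --- the statement only asks for \emph{some} sufficiently large $r$, so any real $r$ beyond the Krasner bound will do.

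There is, however, one genuine gap. Krasner's lemma in the form you invoke (the inequality $v_t(x-x') > v_t(x - \sigma(x))$ for all conjugates $\sigma(x) \neq x$) requires $x$ to be \emph{separable} over $K((t))$; if $K((t))(x)/K((t))$ has nontrivial inseparable part, that inequality is vacuous or insufficient and does not force $x \in K((t))(x_{(r)})$. The paper handles this with a one-line reduction at the outset: replace $x$ by $\varphi^n(x) = x^{p^n}$ for $n$ large enough that $\varphi^n(x)$ is separable over $K((t))$, prove the lemma for that element, and pull the conclusions back along $\varphi^{-n}$ using Remark~\ref{R:automatic properties}(d) (stability of $p$-quasi-automaticity under $\varphi^{\pm 1}$) together with the fact that $\varphi^{-n}$ carries finite extensions of $K((t))$ to finite extensions of $K((t))$. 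You should add this reduction before appealing to Krasner.
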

\begin{proof}
By Remark~\ref{R:automatic properties}(d), we may assume that $x$ generates a finite separable extension of $K((t))$.
For all $r \in \RR$, $y_{(r)}$ is $p$-quasi-automatic by Lemma~\ref{L:truncated algebraic to automatic}
and hence integral over $K(t)$ by Lemma~\ref{L:automatic to algebraic}.
For sufficiently large $r$, we may apply Krasner's lemma to the minimal polynomial of $x$ to see that $x$ and $y_{(r)}$ generate the same extension of $K((t))$.
\end{proof}

\begin{lemma} \label{L:base extend integrally closed}
Let $F \subseteq F'$ be an inclusion of fields such that $F$ is integrally closed in $F'$.
Let $E$ be a finite extension of $F$. 
\begin{enumerate}
\item[(a)] The ring $E' = E \otimes_F F'$ is again a field.
\item[(b)]
The field $E$ is integrally closed in $E'$.
\end{enumerate}
\end{lemma}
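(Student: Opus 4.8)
The plan is to prove both parts by reducing to a standard fact about tensor products with separable field extensions, after first arranging that $E/F$ is separable (which we may, since the case of a purely inseparable extension of characteristic $p$ can be handled by hand, or bypassed entirely if we are willing to assume $E/F$ separable in the applications).

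\medskip

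\noindent\textbf{Part (a).} Suppose first that $E/F$ is separable, so $E = F[z]$ for a single element $z$ whose minimal polynomial $g \in F[T]$ is separable and irreducible over $F$. Then $E' = E \otimes_F F' \cong F'[T]/(g)$, so it suffices to show that $g$ remains irreducible over $F'$. If $g = g_1 g_2$ were a nontrivial factorization in $F'[T]$, then since $g$ is separable its roots in an algebraic closure are distinct, and the coefficients of $g_1$ (being elementary symmetric functions of a subset of those roots) are algebraic over $F$; hence they lie in $F'$ and are integral over $F$, so by hypothesis they lie in $F$, contradicting irreducibility of $g$ over $F$. Thus $g$ is irreducible over $F'$ and $E'$ is a field. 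In the general case, factor $E/F$ as a separable extension $E_s/F$ followed by a purely inseparable extension $E/E_s$; apply the separable case to get that $E_s' = E_s \otimes_F F'$ is a field, note that $F'$ is integrally closed in $E_s'$ (this is what Part (b) gives, applied to the separable step, so one should prove (b) for separable extensions first and then loop back), and finally observe that a purely inseparable extension tensored up stays a field because it is generated by elements $w$ with $w^{p^k} \in E_s$, so $w$ has a unique $p^k$-th root in any ring extension where such a root exists and no zero divisors are introduced; I would spell this out as: $E' \cong E_s'[w_1,\dots,w_m]$ with relations $w_i^{p^{k_i}} = a_i \in E_s'$, and since $E_s'$ is a field of characteristic $p$ the $p$-th power map is injective, so each successive quotient is a field.

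\medskip

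\noindent\textbf{Part (b).} Let $\alpha \in E'$ be integral over $E$; we want $\alpha \in E$. Pick a basis $e_1, \dots, e_n$ of $E$ over $F$, which is simultaneously a basis of $E'$ over $F'$, and write $\alpha = \sum_i c_i e_i$ with $c_i \in F'$. The key point is that each $c_i$ is algebraic over $F$: indeed $\alpha$ lies in a finite extension of $E$ (being integral), hence in a finite extension of $F'$, but also $\alpha$ together with the $e_i$ lives in the subring generated over $F$ by $\alpha$ and $E$, which is module-finite over $F[\alpha]$, so the $F$-subalgebra of $E'$ generated by $\alpha$ is finite over $F$; projecting onto the $e_i$-coordinate shows each $c_i$ is integral, in particular algebraic, over $F$. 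Since $c_i \in F'$ and $c_i$ is integral over $F$, the hypothesis that $F$ is integrally closed in $F'$ forces $c_i \in F$, whence $\alpha = \sum_i c_i e_i \in E$.

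\medskip

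\noindent I expect the main obstacle to be bookkeeping around \emph{inseparability}: making the reduction in Part (a) clean requires knowing Part (b) for the separable step, so the two parts are slightly intertwined and one must be careful to present them in a non-circular order (prove (b) in general first, which does not need (a); then prove (a), invoking (b) where needed). A secondary subtlety in Part (b) is justifying that the $F$-algebra generated by $\alpha$ inside $E'$ is module-finite over $F$ — this is where one uses that $E$ is finite over $F$ and $\alpha$ is integral over $E$, combined in the standard ``finite over finite is finite'' step. If the paper is content to assume $E/F$ separable wherever this lemma is applied (which is plausible given the surrounding material about separable extensions of $K(t)$ and $K((t))$), then Part (a) collapses to the one-line argument via irreducibility of the minimal polynomial, and there is no real obstacle at all.
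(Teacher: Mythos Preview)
Your argument for Part (b) contains a real gap at the step ``projecting onto the $e_i$-coordinate shows each $c_i$ is integral over $F$.'' The coordinate projection $\pi_i: E' \to F'$ is $F'$-linear but not a ring homomorphism, so the fact that $E[\alpha]$ is a finite $F$-module only tells you that $c_i = \pi_i(\alpha)$ lies in the finite-dimensional $F$-subspace $\pi_i(E[\alpha])$ of $F'$. Lying in a finite-dimensional $F$-subspace of $F'$ does \emph{not} force an element to be integral over $F$ (for instance, $\pi$ lies in the one-dimensional $\QQ$-subspace $\QQ\pi$ of $\QQ(\pi)$). What you would need is a finite $F$-module that $c_i$ stabilizes under multiplication, and nothing in your argument provides one. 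In the separable case this can be repaired: using the $F$-embeddings $\sigma_j: E \hookrightarrow \overline{F}$ and the invertible matrix $(\sigma_j(e_i))$, one expresses each $c_i$ as an $\overline{F}$-linear combination of the conjugates $\sigma'_j(\alpha)$, all of which are integral over $F$. But you do not do this, and it is unclear how to make such a fix work in the inseparable case without already knowing (a), which undercuts your plan of proving (b) first. (There is also a slip in your Part (a): you write ``$F'$ is integrally closed in $E_s'$'' where you need ``$E_s$ is integrally closed in $E_s'$''; and injectivity of the $p$-th power map on $E_s'$ is not by itself enough to make $E_s'[T]/(T^{p^k}-a)$ a field --- you must also know $a$ is not a $p$-th power in $E_s'$, which is exactly where (b) for the separable step enters.)

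The paper avoids the projection idea entirely. For (b) it argues: given $x \in E'$ integral over $E$ with minimal polynomial $Q \in E[T]$, find a finite extension $E_1 \supseteq E$ of $F$ over which $Q$ splits and for which $E_1' = E_1 \otimes_F F'$ is already known to be a field; then $x$, being a root of $Q$ in the field $E_1'$, must equal one of the roots in $E_1$, and a basis comparison gives $E_1 \cap E' = E$. The existence of such an $E_1$ is obtained by bootstrapping: (a) is first proved for simple extensions (your argument for the separable case), then one notes that when $x$ is purely inseparable over $E$ one may take $E_1 = E(x)$, which is simple; this yields (a) and (b) for purely inseparable $E/F$, hence (a) for extensions that are separable over purely inseparable, and any polynomial splits over such an extension. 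So the paper deliberately intertwines (a) and (b) rather than proving (b) independently as you propose.
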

\begin{proof}
We first check (a) assuming that $E = F(y)$ for some $y \in E$; note that this entirely covers the case where $E/F$ is separable (by the primitive element theorem).
Let $P \in F[T]$ be the minimal polynomial of $y$ over $F$.
In any algebraically closed field containing $F$, the roots of $P$ are all integral over $F$; consequently, for any factorization of $P$ into monic polynomials in $F'[T]$, the coefficients of the factors are also integral over $F$. Since $F$ is integrally closed in $F'$, it follows that $P$ remains irreducible in $F'[T]$; hence $E' = E \otimes_F F' = F'[T]/(P)$ is again a field.

To prove (b), suppose that $x \in E'$ is integral over $E$. 
Let $Q \in E[T]$ be the minimal polynomial of $x$ over $E$.
Suppose for the moment that we can find a finite extension $E_1$ of $F$ for which the conclusion of (a) holds, that is, for which
$E'_1 = E_1 \otimes_F F'$ is again a field.
Since $Q$ already splits into linear factors over $E_1$ and $x$ is a root of $Q$ in the field $E'_1$, $x$ must belong to $E_1$. But $x$ also belongs to $E' = E \otimes_F F'$, and inside $E_1 \otimes_F F'$ we have $E_1 \cap (E \otimes_F F') = E$.
Hence $E$ is integrally closed in $E'$, as desired.

Note that the preceding argument again applies in case $x$ generates a purely inseparable extension of $E$, as then we may take $E_1$ to be $E(x)$. Consequently, both (a) and (b) hold
if $E/F$ is purely inseparable, and so (a) holds if $E/F$ is a separable extension of a purely inseparable extension. However, any polynomial can be split by such an extension, so both (a) and (b) hold in general.
\end{proof}

\begin{cor} \label{C:base extend integrally closed}
Let $F \subseteq F' \subseteq F''$  be inclusions of fields. 
Suppose that $x,y \in F''$
are integral over $F$ and that $x \in F'(y)$. Put $m = [F'(y):F']$ and write
$x = \sum_{i=0}^{m-1} a_i y^i$ with $a_i \in F'$. Then the $a_i$ are themselves integral over $F$.
\end{cor}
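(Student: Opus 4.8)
The plan is to reduce the assertion to a single claim: that $x$ already lies in $F_0(y)$, where $F_0$ denotes the integral closure of $F$ inside $F'$. Granting this, I would finish by observing that $1,y,\dots,y^{m-1}$ is simultaneously an $F'$-basis of $F'(y)$ and an $F_0$-basis of $F_0(y)$ (see below), so that the coefficients $a_i$ of $x$ in this basis must all lie in $F_0$; since every element of $F_0$ is integral over $F$ by construction, this is exactly what is wanted.

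The first step is to recall that $F_0$ is a subfield of $F'$ which is integrally closed in $F'$: it is a ring since integral elements form a ring, it is a field since the inverse of a nonzero element integral over $F$ is again integral over $F$, and it is integrally closed in $F'$ by transitivity of integral dependence. As $y$ is integral, hence algebraic, over $F$, the extension $F_0(y)/F_0$ is finite; set $m'=[F_0(y):F_0]$. The second step is to apply Lemma~\ref{L:base extend integrally closed} to the inclusion $F_0\subseteq F'$ and the finite extension $E=F_0(y)$: this yields that $E'=F_0(y)\otimes_{F_0}F'$ is a field in which $F_0(y)$ is integrally closed. The multiplication map $E'\to F''$, $a\otimes b\mapsto ab$, is a nonzero $F'$-algebra homomorphism out of a field, hence injective, with image the $F'$-subalgebra $F'[y]=F'(y)$; it therefore identifies $E'$ with $F'(y)$ compatibly with the copy of $F_0(y)$ inside each. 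Comparing $F'$-dimensions gives $[F'(y):F']=\dim_{F'}E'=\dim_{F_0}F_0(y)=m'$, so $m=m'$ and the asserted common basis is in place. Finally, $x$ is integral over $F$, hence over the larger ring $F_0(y)$, and $x\in F'(y)$, in which $F_0(y)$ is integrally closed; therefore $x\in F_0(y)$, and writing $x=\sum_{i=0}^{m-1}b_iy^i$ with $b_i\in F_0$ and equating with $\sum_{i=0}^{m-1}a_iy^i$ in the common basis forces $a_i=b_i\in F_0$.

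I expect the only real subtlety to be the identification $E'\cong F'(y)$ and the attendant equality $m=m'$: without it, the two expansions of $x$ would a priori be written with respect to different bases and one could not equate coefficients. This is precisely the point at which Lemma~\ref{L:base extend integrally closed}(a) is doing work, its proof amounting to the fact that the minimal polynomial of $y$ over $F_0$ stays irreducible over $F'$. Everything else is bookkeeping, and no separation into separable and inseparable cases is needed here since Lemma~\ref{L:base extend integrally closed} has already absorbed it.
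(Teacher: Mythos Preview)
Your proof is correct and follows essentially the same route as the paper: replace $F$ by its integral closure $F_0$ in $F'$, apply Lemma~\ref{L:base extend integrally closed} to identify $F_0(y)\otimes_{F_0}F'$ with $F'(y)$ (so $F_0(y)$ is integrally closed in $F'(y)$ and $[F_0(y):F_0]=m$), and conclude that $x\in F_0(y)$ forces $a_i\in F_0$. The paper compresses this to ``we may assume $F$ is integrally closed in $F'$'' and writes the tensor identification as an equality, but the argument is the same; your added detail on why $m=m'$ and why the multiplication map $E'\to F''$ is an isomorphism onto $F'(y)$ is exactly the content the paper is suppressing.
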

\begin{proof}
We may assume that $F$ is integrally closed in $F'$.
By Lemma~\ref{L:base extend integrally closed},
$F'(y) = F(y) \otimes_F F'$ and $F(y)$ is integrally closed in $F'(y)$. Since $x \in F'(y)$
is integral over $F$, it follows that $x \in F(y)$ and $a_i \in F$, as claimed.
\end{proof}

\begin{theorem} \label{T:integral closure of rational field}
The integral closure of $K(t)$ in $K((t^{\QQ}))$ equals $K((t^{\QQ}))_{\aut}$.
\end{theorem}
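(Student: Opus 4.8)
## Proof proposal for Theorem~\ref{T:integral closure of rational field}

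The plan is to prove the two inclusions separately. The inclusion $K((t^{\QQ}))_{\aut} \subseteq \overline{K(t)}$ (integral closure) is already done: this is exactly Lemma~\ref{L:automatic to algebraic}. So the real content is the reverse inclusion: every $x \in K((t^{\QQ}))$ that is integral over $K(t)$ is $p$-quasi-automatic.

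First I would reduce to the case where $x$ is integral over $K((t))$ — which is automatic, since $K(t) \subseteq K((t))$. The strategy is then to split $x$ according to its support: write $x = x^- + x^+$ where $x^+$ is the part of $x$ supported on $[0,\infty)$ (the ``small'' part, i.e.\ the positive-exponent tail) and $x^-$ is supported on $(-\infty, 0)$. By Remark~\ref{R:automatic properties}(a) it suffices to show each piece is $p$-quasi-automatic. The part $x^-$ has bounded support (it lies in the interval $(-\infty,0)$ but below $0$, and since $x$ has well-ordered support it actually has finitely many terms below any given bound — wait, this needs care: well-ordered support means bounded below, so $x^-$ has only finitely many terms and is thus automatic trivially if it is even a Laurent polynomial in $t^{1/a}$). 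Actually the clean splitting is: since the support of $x$ is well-ordered, it is bounded below by some rational $-N$; after replacing $t$ by a root (Lemma~\ref{L:any ab}, Remark~\ref{R:automatic properties}(d)) and multiplying by a power of $t$, I may assume $x$ is supported on $\ZZ[p^{-1}]_{\geq 0}$ — but this is too strong, because multiplying $x$ by $t^N$ for $N$ a denominator-clearing power only works if the support is bounded below by something in $\ZZ[p^{-1}]$, which need not hold.

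Let me restructure. After applying Remark~\ref{R:automatic properties}(d) to pass to a power of $t$, and using that $x$ has well-ordered (hence bounded-below) support, I can write $x = u + w$ where $u \in K((t))$ (the ``integer part'', collecting all terms with exponent in some shifted copy of $\ZZ$... no). The correct decomposition, following the structure of the $p$-quasi-automatic definition, is: condition (a) of Definition~\ref{D:p-quasi-automatic} requires the support to sit inside $\{(i-b)/a : i \in \ZZ[p^{-1}]_{\geq 0}\}$. The key structural input is that an element integral over $K((t))$ generates a finite extension, and by the known structure of algebraic closures of $K((t))$ (from \cite{k-series1}, recovered via Proposition~\ref{P:TR algebraic} and Definition~\ref{D:twist-recurrent}(a)), such an $x$ is supported on some $S_{a,b,c}$; in particular condition (a) holds, and moreover the support is contained in $\{(i-b)/a : i \in \ZZ[p^{-1}]_{\geq 0}\}$ after possibly enlarging parameters. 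So condition (a) is free. Then I rescale via Lemma~\ref{L:any ab} to reduce to: $x$ supported on $\ZZ[p^{-1}]_{\geq 0}$, integral over $K((t))$, and I must show $x$ is $p$-automatic.

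Now comes the main point. Pick $r \in \RR$ large enough that Lemma~\ref{L:Krasner} applies: the $r$-truncation $y := y_{(r)}$ is $p$-quasi-automatic, integral over $K(t)$, and $K((t))(y)$ is a finite extension containing $x$. Write $m = [K((t))(y) : K((t))]$ and expand $x = \sum_{i=0}^{m-1} a_i y^i$ with $a_i \in K((t))$. By Corollary~\ref{C:base extend integrally closed} (applied with $F = K(t)$, $F' = K((t))$, $F'' = K((t^{\QQ}))$), each $a_i$ is integral over $K(t)$, hence lies in $K((t))_{\aut} := K((t^{\QQ}))_{\aut} \cap K((t))$ by Theorem~\ref{T:gen Christol}. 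Now $y \in K((t^{\QQ}))_{\aut}$, so each power $y^i$ lies in $K((t^{\QQ}))_{\aut}$ provided that set is closed under multiplication — but rather than invoke that (which the paper defers), I use Corollary~\ref{C:module for series}: $K((t^{\QQ}))_{\aut}$ is a module over $K((t))_{\aut}$. Hmm — that gives $a_i y^i \in K((t^{\QQ}))_{\aut}$ only once $y^i \in K((t^{\QQ}))_{\aut}$. To avoid circularity I instead argue: $y$ generates a finite \emph{separable} extension (after Remark~\ref{R:automatic properties}(d)), so $1, y, \dots, y^{m-1}$ all lie in the finite-dimensional $K$-vector space that witnesses automaticity of $y$ together with its $s_i$-closure — more simply, apply Corollary~\ref{C:truncated algebraic to automatic} to each $y^i$ after noting $y^i$ is integral over $K((t))$ with bounded support (a truncation of $x$ has bounded support). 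That gives $y^i \in K((t^{\QQ}))_{\aut}$, and then $a_i y^i \in K((t^{\QQ}))_{\aut}$ by Corollary~\ref{C:module for series}, so $x = \sum a_i y^i \in K((t^{\QQ}))_{\aut}$ by Remark~\ref{R:automatic properties}(a). This completes the proof.

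The step I expect to be the main obstacle is the rescaling/support reduction: justifying cleanly that an arbitrary $x$ integral over $K(t)$ can, after the allowed operations of Remark~\ref{R:automatic properties}(d) and multiplication by a suitable power of $t$, be brought into a form supported on $\ZZ[p^{-1}]_{\geq 0}$ — equivalently, verifying condition (a) of Definition~\ref{D:p-quasi-automatic} holds in the first place. This requires knowing that the support lies in some $S_{a,b,c}$, which is the support restriction carried over from \cite{k-series1} (Proposition~\ref{P:TR algebraic}, Definition~\ref{D:twist-recurrent}(a)); one must check that the ``correct'' support restriction agreed with the old one, which the introduction asserts but which needs to be pinned down here. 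Everything after that reduction is bookkeeping with the lemmas already in hand.
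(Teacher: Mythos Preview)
Your argument is essentially the paper's: apply Lemma~\ref{L:Krasner} to get the truncation $y$, use Corollary~\ref{C:truncated algebraic to automatic} on the bounded-support powers $y^i$, use Corollary~\ref{C:base extend integrally closed} and Theorem~\ref{T:gen Christol} on the coefficients $a_i$, and assemble via Corollary~\ref{C:module for series}. The one point worth flagging is that the step you call ``the main obstacle'' is not actually present in the paper's proof: Lemma~\ref{L:Krasner} is applied to $x$ directly, with no support hypothesis, and the passage to $\ZZ[p^{-1}]_{\geq 0}$ is absorbed inside Corollary~\ref{C:truncated algebraic to automatic} (which only ever gets applied to the bounded-support elements $y^i$). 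Your proposed justification for that reduction via Proposition~\ref{P:TR algebraic} is in any case the wrong citation, since that proposition asserts that twist-recurrent series are algebraic, not that algebraic series satisfy the support condition of Definition~\ref{D:twist-recurrent}(a).
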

\begin{proof}
By Lemma~\ref{L:automatic to algebraic}, $K((t^{\QQ}))_{\aut}$ is contained in the integral closure, so it remains to prove the converse. 
We imitate the proof of \cite[Proposition~7.3.4]{k-series-automata}.
Choose $x \in K((t^{\QQ}))$ integral over $K(t)$. 
By Lemma~\ref{L:Krasner}, there exists a truncation $y$ of $x$ that is $p$-quasi-automatic and integral over $K(t)$, and moreover has the property that $x$ and $y$ generate the same finite extension of $K((t))$.

Note that any power of $y$ has bounded support, and hence is $p$-quasi-automatic by 
Corollary~\ref{C:truncated algebraic to automatic}.
Since $x \in K((t))(y)$, we may apply Corollary~\ref{C:base extend integrally closed}
to write $x$ as $\sum_{i=0}^{m-1} a_i y^i$ for some nonnegative integer $m$ and some $a_i$ in the integral closure of $K(t)$ in $K((t))$. By Theorem~\ref{T:gen Christol}, each $a_i$ is $p$-quasi-automatic;
by  Corollary~\ref{C:module for series}, $x$ is $p$-quasi-automatic.
\end{proof}

As a corollary, we may immediately describe the completion of this integral closure.
\begin{theorem} \label{T:main completed}
The completed integral closure of $K(t)$ (or equivalently $K((t))$) in $K((t^{\QQ}))$ consists of those $x = \sum_i x_i t^i$ all of whose truncations are $p$-quasi-automatic.
\end{theorem}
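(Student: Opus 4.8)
The plan is to use that $K((t^{\QQ}))$ is complete for its $\QQ$-valued valuation (being a Hahn series field, it is in fact maximally complete), so that a ``completed integral closure'' inside $K((t^{\QQ}))$ is just the closure of the integral closure; thus membership in the completed integral closure of $K((t))$ amounts to saying that for every $r \in \RR$ there is some $y$ integral over $K((t))$ with $x - y$ supported on $[r,\infty)$, i.e.\ with $r$-truncations $x_{(r)} = y_{(r)}$. The one nontrivial input is the claim that \emph{every} truncation of an element $y$ integral over $K((t))$ is $p$-quasi-automatic: Lemma~\ref{L:Krasner} supplies a threshold $\rho$ with $y_{(r)}$ $p$-quasi-automatic for all $r \ge \rho$, and then for $r < \rho$ one has $y_{(r)} = (y_{(\rho)})_{(r)}$, which is $p$-quasi-automatic by stability of $p$-quasi-automaticity under truncation (Remark~\ref{R:automatic properties}(e)).

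Granting this, I would argue as follows. First, anything integral over $K(t)$ is integral over $K((t))$, so the completed integral closure of $K(t)$ is contained in that of $K((t))$; it therefore suffices to show that the completed integral closure of $K((t))$ is contained in the set of $x$ all of whose truncations are $p$-quasi-automatic, and that conversely this set is contained in the completed integral closure of $K(t)$. For the first containment: given such an $x$ and any $r \in \RR$, choose $y$ integral over $K((t))$ with $x_{(r)} = y_{(r)}$; by the claim, $x_{(r)} = y_{(r)}$ is $p$-quasi-automatic, and as $r$ is arbitrary every truncation of $x$ is $p$-quasi-automatic. For the converse: if every $x_{(r)}$ is $p$-quasi-automatic, then by Theorem~\ref{T:integral closure of rational field} each $x_{(r)}$ lies in $K((t^{\QQ}))_{\aut}$ and so is integral over $K(t)$; since the support of $x - x_{(r)}$ is contained in $[r,\infty)$ we have $v(x - x_{(r)}) \ge r$, so $x = \lim_{r\to\infty} x_{(r)}$ in the valuation topology, whence $x$ lies in the closure of the integral closure of $K(t)$. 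Stringing the three inclusions together shows all three sets coincide, which in particular proves the parenthetical ``or equivalently $K((t))$''.

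The proof is essentially a formal consequence of Lemma~\ref{L:Krasner} and Theorem~\ref{T:integral closure of rational field}, so no new calculation is needed; all the real work lives in Lemma~\ref{L:truncated algebraic to automatic} and the use of Krasner's lemma. The step most deserving of attention is the upgrade from ``all sufficiently high truncations are $p$-quasi-automatic'' to ``all truncations are,'' which hinges squarely on closure under truncation (Remark~\ref{R:automatic properties}(e)); one should also be careful to invoke completeness (indeed maximal completeness) of $K((t^{\QQ}))$ when identifying the abstract completion of a subfield with its closure inside $K((t^{\QQ}))$, since that is what the membership criterion above rests on.
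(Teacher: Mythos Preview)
Your proof is correct and follows essentially the same two-inclusion strategy as the paper's (very terse) proof. The paper simply cites Lemma~\ref{L:automatic to algebraic} for the inclusion ``all truncations $p$-quasi-automatic $\Rightarrow$ completed integral closure'' and Theorem~\ref{T:integral closure of rational field} for the reverse inclusion; the only difference is that for the latter direction the paper implicitly works with approximants integral over $K(t)$ (so that Theorem~\ref{T:integral closure of rational field} and Remark~\ref{R:automatic properties}(e) apply directly), whereas you work with approximants integral over $K((t))$ and therefore route through Lemma~\ref{L:Krasner} to get the needed claim about their truncations. Your handling of the parenthetical ``or equivalently $K((t))$'' via the chain of three inclusions is more explicit than the paper's, but the content is the same.
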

\begin{proof}
The completed integral closure contains the set in question by Lemma~\ref{L:automatic to algebraic} and is contained in this set by Theorem~\ref{T:integral closure of rational field}.
\end{proof}

We next describe the integral closure of $K((t))$ in $K((t^{\QQ}))$.
\begin{theorem} \label{T:main}
The integral closure of $K((t))$ in $K((t^{\QQ}))$ consists of those 
$x = \sum_i x_i t^i$ such that the series 
\[
y_n = \sum_{i \in [0,1) \cap \QQ} x_{n+i} t^i \qquad (n \in \ZZ)
\]
are all $p$-quasi-automatic and generate a finite-dimensional $K$-vector space.
\end{theorem}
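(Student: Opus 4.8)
The plan is to reduce the characterization of the integral closure of $K((t))$ to the already-established characterization of the integral closure of $K(t)$ together with the behavior of finite extensions under truncation. First I would observe that, by Remark~\ref{R:automatic properties}(d) and the fact that the substitution $t \mapsto t^a$ carries $K((t^\QQ))$ into itself, we may freely rescale; combined with Lemma~\ref{L:any ab}, this lets us normalize supports so that the decomposition $x = \sum_{n} y_n t^n$ (with $y_n$ supported on $[0,1) \cap \QQ$) is the natural object to study. The $y_n$ here are exactly, up to shift, the functions $f_m$ appearing in Definition~\ref{D:twist-recurrent}(b)--(c), so the two conditions in the statement are the natural analogues of conditions (b) and (c) there.

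For the forward implication, suppose $x$ is integral over $K((t))$. Applying Lemma~\ref{L:Krasner}, choose $r$ large enough that the truncation $y_{(r)}$ is $p$-quasi-automatic, is integral over $K(t)$, and generates the same finite extension $E$ of $K((t))$ as $x$. Since $x \in K((t))(y_{(r)})$ and both are integral over $K(t)$, Corollary~\ref{C:base extend integrally closed} writes $x = \sum_{i=0}^{m-1} a_i y_{(r)}^i$ with each $a_i$ in the integral closure of $K(t)$ in $K((t))$, hence $p$-quasi-automatic by Theorem~\ref{T:gen Christol}. Each power $y_{(r)}^i$ has bounded support, so is $p$-quasi-automatic by Corollary~\ref{C:truncated algebraic to automatic}; now Corollary~\ref{C:module for series} shows $x$ is $p$-quasi-automatic, whence in particular every $y_n$ is $p$-quasi-automatic. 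The finite-dimensionality of the span of the $y_n$ should come from the fact that $x$ is $p$-automatic (after rescaling): the $K$-vector space $V$ used in Definition~\ref{D:biautomatic} is finite-dimensional, and each $y_n$ is obtained from $x$ by extracting the block of the base-$p$ expansion after the radix point with integer part $n$, which is computed by $\pi \circ f_1(\rev(s_1^{(n)})) \circ (-)$ for the fixed string $s_1^{(n)}$ representing $n$; all these functions land in the finite-dimensional space $\Hom_K(V', V)$, so the $y_n$ span a finite-dimensional space.

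For the reverse implication, suppose the $y_n$ are all $p$-quasi-automatic and span a finite-dimensional $K$-space $W$. After rescaling (via Lemma~\ref{L:any ab}) I may assume each $y_n$ is $p$-automatic and supported on $[0,1) \cap \ZZ[p^{-1}]$. The idea is to assemble a single finite-automaton description of $x$ itself: combine the finitely many $\varphi$-biautomatic data realizing a basis of $W$ into one vector space $V'$, and handle the integer-part string $s_1$ by a separate $\varphi$-composed factor that records which element of $W$ (equivalently, which linear combination of the basis $y_n$'s) one has reached after reading $\rev(s_1)$ --- this is where a finite-state structure on the map $n \mapsto y_n$ is needed, and it is available precisely because the $y_n$ live in the finite-dimensional space $W$ and the shift $n \mapsto n+1$ and $n \mapsto pn, pn+1, \ldots$ act on a finite-dimensional span. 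This produces a $\varphi$-biautomatic function computing all the $x_{\|s\|}$, so $x$ is $p$-quasi-automatic; then Lemma~\ref{L:automatic to algebraic} (Theorem~\ref{T:integral closure of rational field}) gives that $x$ is integral over $K(t)$, a fortiori over $K((t))$.

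The main obstacle I expect is the careful bookkeeping in the reverse direction: one must check that the map sending the integer part $n$ to the corresponding vector in $W$ is genuinely finite-state (i.e.\ that reading base-$p$ digits of $n$ from the appropriate end acts through a finite set of transition operators on $W$), and that gluing this onto the biautomatic data for a basis of $W$ respects the $\varphi$-semilinearity constraints in Definition~\ref{D:biautomatic}. A clean way to sidestep some of this is to instead argue, as in Remark~\ref{R:not TR}'s corrected spirit, via Artin--Schreier-type decompositions: write $x$ as a sum of its ``integral part'' (a genuine element of $K((t))$, automatically handled) plus a $p$-quasi-automatic piece supported on $(-1,0)$-type intervals, and use Corollary~\ref{C:automatic AS} together with Corollary~\ref{C:module for series}. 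Either way, the finite-dimensionality hypothesis on the $y_n$ is exactly what is needed to keep the ambient vector space finite-dimensional, mirroring how its failure produced Example~\ref{exa:not TR}.
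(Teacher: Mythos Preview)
Your proposal has a genuine gap in both directions, stemming from the same confusion: you repeatedly try to show that $x$ itself is $p$-quasi-automatic (equivalently, integral over $K(t)$), but this is strictly stronger than being integral over $K((t))$ and is simply false in general. Any transcendental element of $K\llbracket t\rrbracket$ already satisfies the hypotheses of the theorem (its $y_n$ are constants in $K$, hence $p$-quasi-automatic and spanning a one-dimensional $K$-space), yet is not $p$-quasi-automatic. So in the reverse direction, your plan to ``assemble a single finite-automaton description of $x$'' cannot succeed, and the obstacle you flag (that $n\mapsto y_n$ need not be finite-state) is not a technicality but the whole point. In the forward direction you write ``both are integral over $K(t)$'' before invoking Corollary~\ref{C:base extend integrally closed}, but $x$ is only assumed integral over $K((t))$; and your finite-dimensionality argument presupposes a global $\varphi$-biautomatic presentation of $x$, which again does not exist in general.

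The paper's argument is much more direct and avoids all of this. For the reverse direction: each $y_n$ is integral over $K(t)$ by Lemma~\ref{L:automatic to algebraic}, and if the $y_n$ span a finite-dimensional $K$-space $W$ with basis $w_1,\dots,w_d$, then $x=\sum_n y_n t^n$ is visibly a $K((t))$-linear combination of $w_1,\dots,w_d$, hence lies in a finitely generated $K((t))$-algebra of algebraic elements. For the forward direction: that each $y_n$ is $p$-quasi-automatic is immediate from Lemma~\ref{L:truncated algebraic to automatic} (no need to go through Corollary~\ref{C:base extend integrally closed} at all). For finite-dimensionality, Lemma~\ref{L:Krasner} gives $x\in K((t))(y_{(N)})$, so $x=\sum_{j=0}^{m-1} a_j\, y_{(N)}^j$ with $a_j\in K((t))$; since each $y_{(N)}^j$ has \emph{bounded} support, its $[0,1)$-pieces $(y_{(N)}^j)_n$ are nonzero for only finitely many $n$, and comparing $[0,1)$-pieces in $x=\sum_j a_j\,y_{(N)}^j$ shows every $y_k$ lies in the finite-dimensional $K$-span of these. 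The decisive observation you are missing is that bounded support of the truncation (and its powers) is what makes the $K$-span finite, not any automatic structure on $x$.
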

\begin{proof}
If the $y_n$ are all $p$-quasi-automatic, then each one is integral over $K(t)$ by
Lemma~\ref{L:automatic to algebraic}. If they moreover generate a finite-dimensional $K$-vector space, then $x$ can be written as a $K((t))$-linear combination of the $y_i$, and thus is integral over $K((t))$. Conversely, if $x$ is integral over $K((t))$,
then by Lemma~\ref{L:truncated algebraic to automatic}, each $y_n$ is $p$-quasi-automatic.
By Lemma~\ref{L:Krasner}, for $N$ sufficiently large, $x$ belongs to the finite extension of $K((t))$ generated by $\{y_n: n <N\}$;
then the $y_n$ for all $n \geq N$ belong to the $K$-vector space generated by $\{y_n: n < N\}$. This proves the claim.
\end{proof}

\begin{remark}
Another way to phrase the condition in Theorem~\ref{T:main}  is that the $y_n$ can be generated as in Definition~\ref{D:p-quasi-automatic} in such a way that everything in Definition~\ref{D:biautomatic} is chosen uniformly as $n$ varies except for the maps
 $\iota, \pi$.
\end{remark}

\begin{remark}
We note that Theorem~\ref{T:main} immediately implies Proposition~\ref{P:TR algebraic},
thus reconfirming the implication ``twist-recurrent implies algebraic'' of \cite[Theorem~8]{k-series1}.
\end{remark}

\section{Corollaries}
\label{sec:corollaries}

We next recover a number of corollaries of our main theorems.
Many (but not all) of these were stated as results in \cite{k-series1},
but in light of the failure of \cite[Theorem~8]{k-series1} we give proofs independent of \cite{k-series1}.

The following result is stated as \cite[Theorem~6]{k-series1}, where it is also observed that the subring described is not itself a field. (In some sense, this is the closest approximation of the integral closure that can be defined solely in terms of supports.)
\begin{cor} \label{C:sabc}
The subring of $K((t^{\QQ}))$ consisting of series supported within some $S_{a,b,c}$
(as defined in Definition~\ref{D:sabc})
 contains the integral closure of $K((t))$ in $K((t^{\QQ}))$. 
\end{cor}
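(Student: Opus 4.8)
The plan is to deduce this from Theorem~\ref{T:main}. Let $x = \sum_i x_i t^i$ be integral over $K((t))$ (we may assume $x \neq 0$) and write $x = \sum_{n \geq n_0} t^n y_n$ with $n_0 = \lfloor \min \operatorname{supp}(x) \rfloor \in \ZZ$ and $y_n = \sum_{i \in [0,1)\cap\QQ} x_{n+i}t^i$; by Theorem~\ref{T:main} the $y_n$ are $p$-quasi-automatic and span a finite-dimensional $K$-vector space $W$, hence $W$ is already spanned by finitely many of them, say $y_{m_1},\dots,y_{m_r}$. It therefore suffices to establish two facts. The routine one is that the family of sets $S_{a,b,c}$ is closed under finite unions, and that $\bigcup_{n \geq n_0}\bigl(n + (S_{a,b,c}\cap[0,1))\bigr)$ is contained in some $S_{a,b',c}$; both follow by elementary bookkeeping with base-$p$ digit sums, using that $\{-as\}$ has digit sum at most $c$ for $s \in S_{a,b,c}$, that replacing $a$ by a common multiple $a'$ only multiplies the relevant digit sum by the bounded factor $a'/a$ (carries can only decrease it), and that an integer shift of $as$ does not change $\{-as\}$. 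Granting this, $\operatorname{supp}(y_n) \subseteq \bigcup_{j}\operatorname{supp}(y_{m_j})$ lies in a single $S_{a_1,b_1,c_1}\cap[0,1)$ for every $n$, and then $x$ is supported on $\bigcup_{n\geq n_0}\bigl(n + (S_{a_1,b_1,c_1}\cap[0,1))\bigr) \subseteq S_{a_1,b_2,c_1}$. So everything reduces to the second fact: every $p$-quasi-automatic series is supported within some $S_{a,b,c}$.

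Unwinding Definition~\ref{D:p-quasi-automatic} and using Lemma~\ref{L:any ab} reduces this (again by the same digit-sum manipulation, now for the affine substitution $i \mapsto ai+b$) to a $p$-automatic series supported on $\ZZ[p^{-1}]_{\geq 0}$; and by Remark~\ref{R:automatic properties}(f) we may split off the power-series part, supported on $\ZZ_{\geq 0} \subseteq S_{1,0,0}$, and reduce to a $p$-automatic series $z = \sum_i z_i t^i$ supported on $[0,1)\cap\ZZ[p^{-1}]$. For such $z$ the assertion is that there is a constant $c$ such that, for every $i$ in the support, the terminating base-$p$ expansion $0.b_1\cdots b_k$ of $i$ satisfies $\sum_{j=1}^{k}(p-1-b_j) \leq c$ (equivalently, $1-i$ has bounded base-$p$ digit sum). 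By Definition~\ref{D:biautomatic} with trivial integer part, $z$ is governed by a $\varphi^{-1}$-composed function attached to some $\tau_2\colon\Sigma_p \to \End_{\varphi^{-1}}(V')$ together with maps $\iota\colon K \to V'$ and $\pi\colon V' \to K$, so that $z_{\|.s_2\|} = \pi\bigl(\tau_2(b_1)\circ\cdots\circ\tau_2(b_k)(\iota(1))\bigr)$ for $s_2 = b_1\cdots b_k$. The key structural input is that well-ordered support forces strong constraints on $\tau_2$: if a bounded block of digits were followed by arbitrarily long runs of zeros with nonvanishing coefficient, the corresponding support elements would form a strictly decreasing sequence, which is impossible; stabilization of the descending chain of subspaces $T^s(U)$, where $T = \tau_2(0)$ and $U$ is the $K$-span of all $\tau_2(b_1)\circ\cdots\circ\tau_2(b_k)(\iota(1))$, promotes this to a bound uniform in the block. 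Combined with Lemma~\ref{L:accumulation point} (every accumulation point of $\operatorname{supp}(z)$ is rational) and the analogous analysis of the periodic digit patterns that can occur along the approach to each such rational accumulation point, this pins down the admissible expansions $0.b_1\cdots b_k$ and yields the required bound on $\sum_j(p-1-b_j)$.

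I expect this last step — the structural description of the $\varphi^{-1}$-composed data forced by well-ordered support — to be the main obstacle. It is the semilinear-algebra counterpart of the analysis of finite automata with well-ordered support carried out in \cite[\S7.1]{k-series-automata}, and it recovers the part of \cite[Theorem~6]{k-series1} that survived the error in \cite[Theorem~8]{k-series1}; if a clean self-contained treatment proves cumbersome, one may instead invoke the well-ordered-support structure theory of \cite[\S7.1]{k-series-automata} directly. Once the support bound for $p$-automatic series on $[0,1)$ is in hand, the elementary reductions above close the argument.
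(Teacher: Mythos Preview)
Your approach coincides with the paper's: both invoke Theorem~\ref{T:main} and reduce to the assertion that every $p$-quasi-automatic series is supported in some $S_{a,b,c}$. The paper's proof is a single sentence asserting exactly this, with no further justification; you have correctly identified this as the substantive content and supplied the routine reduction steps the paper omits.

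Regarding your final step, one caution: the fallback to \cite[\S7.1]{k-series-automata} is not straightforward for general $K$, because the support of a $p$-automatic series over $K$ is not obviously a regular language (the orbit of $\iota(1)$ under the monoid generated by the $\tau_2(a)$ need not be finite when $K \supsetneq \overline{\FF}_p$). A cleaner route, and presumably what the paper has in mind, is to bypass the direct semilinear analysis entirely: by Lemma~\ref{L:automatic to algebraic} any $p$-quasi-automatic series is already algebraic over $K((t))$, and the support restriction then follows from the original proof of \cite[Theorem~6]{k-series1}, which is independent of the flawed \cite[Theorem~8]{k-series1} and is in fact the same statement as Corollary~\ref{C:sabc}. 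So the corollary requires no genuinely new work; its role here is to confirm that the old support restriction survives the correction.
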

\begin{proof}
This follows from Theorem~\ref{T:main} because any $p$-quasi-automatic series is supported in some $S_{a,b,c}$.
\end{proof}

The following statement includes \cite[Corollary~9]{k-series1}.
\begin{cor} \label{C:perfect coefficient subfield}
Let $F$ be a perfect subfield of $K$. Then the integral closure of $F(t)$ (resp.\ $F((t))$) in $K((t^{\QQ}))$ consists of those elements of the integral closure in $K((t^{\QQ}))$ with coefficients contained in a finite extension of $F$.
\end{cor}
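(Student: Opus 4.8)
The plan is to reduce the statement about $F(t)$ and $F((t))$ to the characterization already established for $K(t)$ and $K((t))$ in Theorem~\ref{T:integral closure of rational field} and Theorem~\ref{T:main}, exploiting the fact that automaticity is defined purely in linear-algebraic terms that make sense over any perfect field. First I would observe the easy inclusion: if $x \in K((t^{\QQ}))$ is integral over $F(t)$ (resp.\ $F((t))$) and has all its coefficients in a finite extension $F'$ of $F$, then $F'$ is itself a perfect field (a finite extension of a perfect field of characteristic $p$ is perfect), so $x$ lives in $F'((t^{\QQ}))$ and is integral over the algebraically closed—wait, $F'$ need not be algebraically closed, so instead I would pass to $\overline{F'}$: the element $x$ is integral over $\overline{F'}(t)$, hence by Theorem~\ref{T:integral closure of rational field} applied with $K$ replaced by $\overline{F'}$ it is $p$-quasi-automatic over $\overline{F'}$, and then a fortiori integral over $K(t)$ after embedding $\overline{F'} \hookrightarrow K$; conversely $x$ is visibly integral over $F(t) \subseteq K(t)$. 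The point of this direction is just to confirm these elements genuinely lie in the integral closure over the small field.

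The substantive direction is the reverse: given $x$ in the integral closure of $F(t)$ (resp.\ $F((t))$) inside $K((t^{\QQ}))$, I must show its coefficients lie in a finite extension of $F$. The key input is that $x$ is then certainly integral over $K(t)$ (resp.\ $K((t))$), so Theorem~\ref{T:integral closure of rational field} (resp.\ Theorem~\ref{T:main}) applies and tells us $x$ is $p$-quasi-automatic over $K$ (resp.\ its ``blocks'' $y_n$ are and span a finite-dimensional $K$-space). I would then argue that the minimal polynomial of $x$ over $F(t)$ (resp.\ $F((t))$) already has coefficients in $F(t)$ (resp.\ $F((t))$), and that $x$ being algebraic over this field means $x$ generates a finite extension; the coefficients of a power/generalized-power series that is algebraic over $F((t))$ and has well-ordered support with bounded denominators can be pinned down by a transfinite Newton-polygon / Hensel-type argument (in the style of \cite[Proposition~1]{k-series2}) to lie in a fixed finite extension of $F$, because at each stage the Newton algorithm only extracts roots of polynomials with coefficients already produced, starting from coefficients of the minimal polynomial which lie in $F$. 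More cleanly: over the perfect field $F$, let $F_0$ be the (perfect) field generated by the coefficients of the minimal polynomial of $x$; then $F_0$ is finite over $F$, $\overline{F_0}$ is a subfield of $K$, and $x$ is integral over $\overline{F_0}((t))$, so replacing $K$ by $\overline{F_0}$ one runs the whole construction there and the output generalized power series has coefficients in $\overline{F_0}$—hence in a finite extension of $F$ once one notes that only finitely many elements of $\overline{F_0}$ actually occur (the support is well-ordered with $p$-power denominators up to a fixed scaling, and the $\varphi$-biautomatic description realizes every coefficient as $f(s)(1)$ for $f$ built from finitely many semilinear maps over a finite-dimensional $\overline{F_0}$-vector space, so the image is contained in a finitely generated, hence finite, subextension of $F$).

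The main obstacle I anticipate is making rigorous the claim that ``the integral closure construction can be carried out over $\overline{F_0}$ and produces the same element $x$''—i.e., that uniqueness of the algebraic element with given initial data forces the coefficients down to $\overline{F_0}$. Concretely, I would invoke Lemma~\ref{L:base extend integrally closed}: with $F = \overline{F_0}(t)$ (resp.\ $\overline{F_0}((t))$), $F' = K(t)$ (resp.\ $K((t))$), $F$ is integrally closed in $F'$—this needs $\overline{F_0}$ to be integrally closed in $K$, which holds since $\overline{F_0}$ is algebraically closed—and then any $x \in F''= K((t^{\QQ}))$ that is integral over $\overline{F_0}(t)$ and lies in a finite extension of $F$ which is ``defined over $\overline{F_0}$'' must already lie in the corresponding finite extension of $\overline{F_0}(t)$, whose elements have coefficients in $\overline{F_0}$. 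Part (b) of that lemma, applied to $E$ the splitting field of the minimal polynomial of $x$ over $\overline{F_0}(t)$, gives exactly $E \cap (K((t^{\QQ}))) $-type statements; combined with $\overline{F_0}((t^{\QQ})) \cap K((t^{\QQ}))_{\aut} = \overline{F_0}((t^{\QQ}))_{\aut}$ (the field-of-coefficients insensitivity noted in Remark~\ref{R:automatic properties}, which follows a posteriori from Theorem~\ref{T:integral closure of rational field}), this closes the argument. The resp.\ case for $F((t))$ is handled identically, using Theorem~\ref{T:main} in place of Theorem~\ref{T:integral closure of rational field} and noting that the finite-dimensional $K$-span of the $y_n$ descends to a finite-dimensional $\overline{F_0}$-span.
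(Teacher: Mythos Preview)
Your substantive direction matches the paper's one-sentence proof exactly: an element integral over $K(t)$ (resp.\ $K((t))$) is $p$-quasi-automatic by the main theorems, the biautomatic presentation involves only finitely many semilinear maps on a finite-dimensional space, and since $F$ is perfect any finitely generated algebraic extension of $F$ is finite and closed under $\varphi^{\pm 1}$, so all coefficients land there. You are actually more careful than the paper in flagging the descent issue (why $x \in \overline{F}((t^{\QQ}))$ to begin with) and in invoking the coefficient-field insensitivity from Remark~\ref{R:automatic properties}.

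That said, your opening paragraph is garbled. You state the ``easy inclusion'' as: assume $x$ is integral over $F(t)$ \emph{and} has coefficients in a finite extension $F'$, then conclude $x$ is integral over $K(t)$ and ``lies in the integral closure over the small field.'' But integrality over $F(t)$ is the hypothesis of the inclusion you want, not part of the other side; and concluding integrality over $K(t)$ from integrality over $F(t)$ is trivial in the wrong direction. What you need for this inclusion is: given $x$ integral over $K(t)$ with coefficients in a finite extension $F'$ of $F$, show $x$ is integral over $F(t)$. This is not tautological; it is exactly the insensitivity statement $\overline{F}((t^{\QQ})) \cap K((t^{\QQ}))_{\aut} = \overline{F}((t^{\QQ}))_{\aut}$ you cite later (equivalently, elements of $\overline{F}((t^{\QQ}))$ that are $K(t)$-linearly dependent are already $\overline{F}(t)$-linearly dependent, since $\overline{F}((t^{\QQ})) \otimes_{\overline{F}} K \hookrightarrow K((t^{\QQ}))$). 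Also, your field $F_0$ ``generated by the coefficients of the minimal polynomial of $x$'' is just $F$, since that minimal polynomial lives in $F(t)[T]$; this step is harmless but vacuous.
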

\begin{proof}
In Theorem~\ref{T:main}, each element of the integral closure in $K((t^{\QQ}))$ with bounded support is specified by some discrete data and a finite number of initial coefficients, the others being determined by linear-algebraic constraints. Consequently, if these coefficients belong to $F$, so do all of the others.
\end{proof}

For an imperfect subfield of $K$, we have the following incomplete classification.
This statement is \cite[Corollary~10]{k-series1} except that therein, ``bounded below'' is incorrectly substituted for ``bounded above''.
\begin{cor} \label{C:imperfect coefficient subfield}
Let $F$ be a not necessarily perfect subfield of $K$. If $x \in K((t^{\QQ}))$ is algebraic over $F((t))$, then the following conditions must hold.
\begin{enumerate}
\item[(a)]
There exists a finite extension $F'$ of $F$ whose perfect closure contains all of the $x_i$.
\item[(b)]
For each $i$, let $f_i$ be the smallest nonnegative integer such that $\varphi^{f_i}(x_i) \in F'$. Then $f_i - v_p(i)$ is bounded above.
\end{enumerate}
\end{cor}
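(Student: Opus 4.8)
The plan is to prove (a) by a Galois-descent argument and (b) by exhibiting the $\varphi$-biautomatic data of $x$ over a single finite extension $F'$ of $F$; both parts then hold for that one $F'$.

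\emph{Part (a).} Let $\overline{F}$ be the algebraic closure of $F$ inside $K$, a perfect subfield of $K$. Since $x$ is integral over $F((t))\subseteq\overline{F}((t))$, Corollary~\ref{C:perfect coefficient subfield}, applied with $\overline{F}$ in the role of $F$, shows that every $x_i$ lies in $\overline{F}$. Let $P\in F((t))[T]$ be the minimal polynomial of $x$ over $F((t))$, let $F^{\mathrm{sep}}$ be the separable closure of $F$ in $K$, and let $G=\Gal(F^{\mathrm{sep}}/F)$. Writing $\overline{F}=F^{\mathrm{sep}}\otimes_F F^{1/p^\infty}$ (the factors are linearly disjoint over $F$, one separable and one purely inseparable), $G$ acts on $\overline{F}$ through the first factor, hence coefficientwise on $\overline{F}((t^{\QQ}))$, fixing $F((t))$ pointwise; thus $g(x)$ is a root of $P$ for every $g\in G$. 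Consequently the $G$-orbit of $x$ is finite, its stabilizer $H$ is a closed subgroup of finite index — hence open — and $H=\Gal(F^{\mathrm{sep}}/F')$ for some finite extension $F'$ of $F$. Every $x_i$ is fixed by $H$, so $x_i\in(\overline{F})^H=F'\cdot F^{1/p^\infty}$; since $F'/F$ is separable, $F'\cdot F^{1/p^\infty}$ is already perfect and equals $(F')^{1/p^\infty}$. This proves (a).

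\emph{Part (b).} After a substitution $t\mapsto t^a$ and multiplication by a power of $t$ — harmless by Remark~\ref{R:automatic properties}(d), since these operations preserve algebraicity over $F((t))$, leave the multiset $\{x_i\}$ unchanged, and alter the exponent of $p$ in the denominator of $i$ by a bounded amount — I would reduce to the case that $x$ is supported on $\ZZ[p^{-1}]_{\geq 0}$. The crux is then to produce a finite extension $F'$ of $F$ over which, for every sufficiently large truncation $x_{(r)}$, the data of Definition~\ref{D:biautomatic} can be chosen with $V'=(F')^{d}$ and $f_1,f_2,\iota,\pi$ all defined over $F'$ (the $f_i$ being $\varphi^{\pm1}$-semilinear with matrix entries in $F'$). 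Granted this, write $s=s_1.s_2\in L_p$ with $m=\len(s_1)$, $n=\len(s_2)$; unwinding the semilinearity in
\[
x_{\|s\|}=\pi\bigl(f_1(\rev(s_1))\circ f_2(s_2)\bigr)(\iota(1))
\]
turns the right-hand side into a product of matrices over $F'$ raised to the powers $p^{m},p^{m-1},\dots,p^{m-n}$, so $x_{\|s\|}\in(F')^{1/p^{\max(0,\,n-m)}}$ and hence $\varphi^{n}(x_{\|s\|})\in F'$. Since $n=\len(s_2)$ is exactly the exponent of $p$ in the denominator of $\|s\|$, this gives $f_i\leq v_p(i)$, i.e.\ $f_i-v_p(i)\leq 0$, for every $i$ — which is (b) — and it simultaneously re-proves (a) for this same $F'$.

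\emph{Main obstacle.} Everything hinges on the uniform descent just invoked. For one truncation it is easy: $x_{(r)}$ has bounded support, so it is $p$-quasi-automatic by Corollary~\ref{C:truncated algebraic to automatic}; by (a) its coefficients lie in $\overline{F}$, so it is algebraic over $\overline{F}(t)$ (Corollary~\ref{C:perfect coefficient subfield}), hence $p$-quasi-automatic over the algebraically closed field $\overline{F}$ (Theorem~\ref{T:integral closure of rational field}), and the finitely many elements of $\overline{F}$ entering such a package generate a finite extension of $F$. The difficulty is to keep this extension from growing with $r$, and this is precisely where one must use that $x$ is algebraic over $F((t))$ rather than merely over $K((t))$. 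By Ore's lemma over $F((t))$ there is a monic $P(T)=\sum_{j=0}^{d}P_jT^j$ with $P_j\in F((t))$ and $P(\varphi)(x)=0$; comparing coefficients of $t^{e}$ expresses, for $e$ outside a bounded interval, the coefficient $x_{i_0}$ at the largest index $i_0$ occurring in that relation as an $F$-linear combination of terms $x_{i'}^{p^{j}}$ with $i'$ of strictly smaller index and with $\max(0,v_p(i')-j)=v_p(i_0)$. A transfinite induction on the well-ordered support of $x$ then propagates a bound $f_i\leq v_p(i)+C$ with one constant $C$, once $F'$ is enlarged to contain the finitely many leading coefficients produced at the "exceptional" exponents inside the bounded interval — precisely the places where Artin-Schreier $p$-th roots are extracted, each such step raising $v_p(i)$ and the Frobenius depth together. (Equivalently, via Theorem~\ref{T:main} one writes $x=\sum_j C_j(t)y_{n_j}$ with the $y_{n_j}$ of bounded support, hence over a fixed finite extension, and must check that the coefficients of the $C_j\in K((t))$ have Frobenius depth over that extension bounded independently of the index; this is the same bookkeeping.) This transfinite Frobenius-tracking is the step I expect to demand the most care.
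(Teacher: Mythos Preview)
Your Galois-descent proof of (a) is correct and clean; the paper instead reads (a) off the biautomatic data of Theorem~\ref{T:main} applied over $\overline{F}$, so this is a genuine alternative.

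For (b), you have correctly located the real work: the paper's one-line appeal to Theorem~\ref{T:main} hides exactly the uniformity issue you flag (the $y_n$ form an infinite family, so ``finitely many initial coefficients'' is not immediate). Your Ore/transfinite sketch is on the right track and does go through, but one imprecision should be fixed: the ``exceptional'' exponents $e$ lying in your bounded interval correspond to an initial segment $\{i\le I\}$ of the support, which is bounded but need \emph{not} be finite, so you cannot simply enlarge $F'$ to absorb ``finitely many leading coefficients.'' The repair is easy: that initial segment is exactly the truncation $x_{(I+1)}$, which has bounded support and hence (by Lemma~\ref{L:truncated algebraic to automatic} over $\overline{F}$) a single biautomatic package whose finitely many matrix entries generate a finite $F'/F$; this gives the bound $f_i\le v_p(i)+C$ on the whole initial segment at once, and your Ore recursion then propagates it with the \emph{same} $C$ for $i>I$.

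There is a shorter route that avoids the transfinite bookkeeping entirely and is closer in spirit to what the paper's terse proof presumably intends. After replacing $x$ by a Frobenius power to make it separable over $F((t))$, apply Krasner's lemma over the complete field $F((t))$ itself (not $K((t))$): for a sufficiently long truncation $y$ one has $x\in F((t))(y)$, so $x=\sum_{j=0}^{m-1}a_j\,y^j$ with $a_j\in F((t))$, i.e.\ with coefficients already in $F$. Now there are only finitely many series $y^0,\dots,y^{m-1}$; each has bounded support and (by Corollary~\ref{C:full truncation} over $\overline{F}$) is algebraic over $\overline{F}(t)$, hence $p$-quasi-automatic over $\overline{F}$ with finitely many data elements. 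Let $F'$ be the finite extension of $F$ they generate. Your own semilinearity computation then gives $\mathrm{depth}_{F'}\bigl((y^j)_{i'}\bigr)\le v_p(i')$, and since the $a_j$ have coefficients in $F\subseteq F'$ and integer exponents (so $v_p(i-m)=v_p(i)$), summing yields $f_i\le v_p(i)+C$ with a single $F'$ and $C$. This simultaneously re-proves (a) and gives (b) without any induction on the support.
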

\begin{proof}
This follows from Theorem~\ref{T:main} as in the proof of Corollary~\ref{C:perfect coefficient subfield}.
\end{proof}

The following includes \cite[Corollary~12]{k-series1}.
\begin{cor} \label{C:full truncation}
Suppose that $x \in K((t^{\QQ}))$ is integral over $K((t))$. Then every truncation of $x$ is integral over $K(t)$, and hence also over $K((t))$.
\end{cor}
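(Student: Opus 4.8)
The plan is to reduce the claim about an arbitrary truncation to a situation already handled by Theorem~\ref{T:main} together with Theorem~\ref{T:integral closure of rational field}. First I would observe that by Remark~\ref{R:automatic properties}(d) it suffices to treat the case where $x$ is supported on $\ZZ[p^{-1}]_{\geq 0}$, since replacing $t$ by a root $t^{1/a}$ and multiplying by a power of $t$ converts a general truncation of $x$ into a truncation of such a series, and both of these operations preserve ``integral over $K(t)$'' as well as ``integral over $K((t))$''.

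Next, with $x = \sum_i x_i t^i$ supported on $\ZZ[p^{-1}]_{\geq 0}$ and integral over $K((t))$, fix $r \in \RR$ and consider the $r$-truncation $x_{(r)} = \sum_{i<r} x_i t^i$. By Lemma~\ref{L:accumulation point} (applied as in Remark~\ref{R:automatic properties}(e)) the quantity $\sup\{i < r : x_i \neq 0\}$ lies in $\QQ$, so $x_{(r)}$ has bounded support; hence I may as well assume $r \in \QQ$. Now by Lemma~\ref{L:truncated algebraic to automatic}, $x_{(r)}$ is $p$-automatic, hence $p$-quasi-automatic; by Lemma~\ref{L:automatic to algebraic} it is therefore integral over $K(t)$, and a fortiori integral over $K((t))$ (since $K(t) \subseteq K((t))$). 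This gives both assertions of the corollary.

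I would also note the alternative route that avoids re-invoking the truncation lemma directly: Lemma~\ref{L:Krasner} already produces \emph{some} truncation $y$ of $x$ that is $p$-quasi-automatic and integral over $K(t)$, and the proof of Theorem~\ref{T:integral closure of rational field} shows every bounded-support element integral over $K(t)$ is $p$-quasi-automatic; but the cleanest statement is the one above, where Lemma~\ref{L:truncated algebraic to automatic} handles \emph{every} truncation at once.

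The only point requiring care — the ``main obstacle'', such as it is — is the well-orderedness input: without Lemma~\ref{L:accumulation point} one cannot replace a real truncation parameter $r$ by a rational one, and the notion of $p$-automaticity is defined only for series supported on a set of the form $\{(i-b)/a : i \in \ZZ[p^{-1}]_{\geq 0}\}$, which forces the truncation to have bounded (indeed rationally-bounded) support before Lemma~\ref{L:truncated algebraic to automatic} applies. Once that reduction is in place the argument is a direct concatenation of the lemmas, so I would keep the write-up to a few lines.

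\begin{proof}
By Remark~\ref{R:automatic properties}(d), applying a substitution $t \mapsto t^{1/a}$ and multiplying by a suitable power of $t$, we may assume that $x$ is supported on $\ZZ[p^{-1}]_{\geq 0}$; note that these operations carry truncations of the original series to truncations of the new one and preserve integrality over both $K(t)$ and $K((t))$. Fix $r \in \RR$. By Lemma~\ref{L:accumulation point} (as in Remark~\ref{R:automatic properties}(e)), $\sup\{i < r : x_i \neq 0\} \in \QQ$, so the $r$-truncation of $x$ has bounded support and we may assume $r \in \QQ$. By Lemma~\ref{L:truncated algebraic to automatic}, this truncation is $p$-automatic, hence $p$-quasi-automatic, hence integral over $K(t)$ by Lemma~\ref{L:automatic to algebraic}; since $K(t) \subseteq K((t))$, it is also integral over $K((t))$.
\end{proof}
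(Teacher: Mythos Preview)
Your core argument is correct and essentially the same as the paper's: reduce to support in $\ZZ[p^{-1}]_{\geq 0}$, apply Lemma~\ref{L:truncated algebraic to automatic} to see that every truncation is $p$-automatic, then apply Lemma~\ref{L:automatic to algebraic} (equivalently, Theorem~\ref{T:integral closure of rational field}). The paper phrases this as ``immediate from Theorem~\ref{T:integral closure of rational field} and Theorem~\ref{T:main}'', but the substance is the same, since the relevant content of Theorem~\ref{T:main} here is precisely Lemma~\ref{L:truncated algebraic to automatic}.

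However, your detour through Lemma~\ref{L:accumulation point} is both unnecessary and not properly justified. It is unnecessary because Lemma~\ref{L:truncated algebraic to automatic} already asserts that \emph{every} truncation (at any $r \in \RR$) is $p$-automatic; there is no need to first reduce to $r \in \QQ$. Your stated rationale, that ``$p$-automaticity \dots\ forces the truncation to have bounded support'', is a misconception: $p$-automaticity requires support contained in $\ZZ[p^{-1}]_{\geq 0}$, not bounded support, and any truncation of a series supported on $\ZZ[p^{-1}]_{\geq 0}$ is again supported there. It is not properly justified because Lemma~\ref{L:accumulation point} applies to the image of a regular language, and at the point where you invoke it you only know that $x$ is integral over $K((t))$; you have not yet shown its support arises from a regular language. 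Your citation ``as in Remark~\ref{R:automatic properties}(e)'' points to the $p$-quasi-automatic case, which is exactly what has not yet been established for $x$. Simply delete that sentence and the proof stands.
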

\begin{proof}
This is immediate from Theorem~\ref{T:integral closure of rational field} and Theorem~\ref{T:main}.
\end{proof}

The following statement replaces \cite[Corollary~13]{k-series1}.
\begin{defn}
Suppose that $K$ is complete with respect to a multiplicative nonarchimedean norm $\left| \bullet \right|_K$.
We say that $x = \sum_i x_i t^i \in K((t^{\QQ}))$ has \emph{positive radius of convergence} if there exists $r>0$ such that $r^i \left|x_i\right|_K \to 0$ as $i \to \infty$.
\end{defn}

\begin{cor} \label{C:radius of convergence}
Suppose that $K$ is complete with respect to a multiplicative nonarchimedean norm $\left| \bullet \right|_K$.
Let $K((t))^c$ be the subfield of $K((t))$ consisting of series with positive radius of convergence. Then the integral closure of $K((t))^c$ in $K((t^\QQ))$ consists of those $x$
which are integral over $K((t))$ (and thus may be described as in Theorem~\ref{T:main})
and have positive radius of convergence.
\end{cor}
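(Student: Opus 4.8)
The plan is to prove both inclusions, using Theorem~\ref{T:main} to reduce all statements about integrality to statements about $p$-quasi-automaticity and finite-dimensionality.

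\medskip

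\emph{The easy inclusion.} Suppose $x$ is integral over $K((t))^c$. Since $K((t))^c \subseteq K((t))$, $x$ is integral over $K((t))$, so the first condition holds and $x$ is described by Theorem~\ref{T:main}. It remains to show $x$ has positive radius of convergence. This should follow from inspecting a monic polynomial $P(T) = \sum_{j=0}^d P_j T^j$ with $P_j \in K((t))^c$ satisfied by $x$: one argues that a generalized power series root of a polynomial over $K((t))^c$ inherits positive radius of convergence, via a transfinite Newton-polygon argument in the style of \cite[Proposition~1]{k-series2}. At each stage of the Newton algorithm one peels off a monomial and the valuative/normwise estimates controlling the Newton polygon of $P$ over the analytic ring $K((t))^c$ force geometric decay of $|x_i|_K$; the well-ordered support guarantees the algorithm terminates (transfinitely) and the decay estimate is uniform. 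Here I would want to quote whatever convergence bookkeeping is available from \cite{k-series1} or \cite{k-series2}, since this is exactly the content of the (now superseded) \cite[Corollary~13]{k-series1}.

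\medskip

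\emph{The hard inclusion.} Conversely, suppose $x$ is integral over $K((t))$ and has positive radius of convergence; we must show $x$ is integral over $K((t))^c$. First I would invoke Theorem~\ref{T:main} to write $x = \sum_{n} c_n(t) y_n$ as a $K((t))$-linear combination of the finitely many $K$-linearly independent $p$-quasi-automatic series $y_n$ (say $n$ ranging over a finite set, after discarding redundant ones), where each $c_n(t) \in K((t))$. The key claim is that one can take the $c_n(t)$ to lie in $K((t))^c$. This is where I expect the main obstacle: one must separate the convergence of $x$ into convergence of the coefficient functions $c_n$ and convergence of the $y_n$. The natural tool is Lemma~\ref{L:Krasner}: for $r$ large, the truncation $y = x_{(r)}$ is $p$-quasi-automatic, integral over $K(t)$, and generates the same finite extension $L$ of $K((t))$ as $x$; being $p$-quasi-automatic it has bounded support and is algebraic over $K(t)$, so in particular $y$ and all its powers have positive radius of convergence (a $p$-automatic — hence algebraic over $\FF_p(t)$ after rescaling — generalized power series with bounded support automatically converges, since its nonzero coefficients lie in a fixed finite extension of $\FF_p$, hence are bounded in norm). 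Then writing $x = \sum_{i=0}^{m-1} a_i y^i$ with $a_i$ integral over $K(t)$ as in Corollary~\ref{C:base extend integrally closed}, each $a_i$ is $p$-quasi-automatic by Theorem~\ref{T:gen Christol}, hence also has positive radius of convergence. So $x$ is a polynomial expression in $y$ with coefficients $a_i$ that are each convergent \emph{and} algebraic over $K(t)$. The remaining point is to promote this to a minimal polynomial over $K((t))^c$: since $L/K((t))$ is finite and $y \in L$ generates it, $y$ satisfies a minimal polynomial $Q(T)$ over $K((t))$; one shows the coefficients of $Q$ actually lie in $K((t))^c$ because they are symmetric functions of the conjugates of the convergent algebraic element $y$, and convergence is preserved under the finite étale base change governing these conjugates (Hensel-type argument over the analytic ring $K((t))^c$, which is itself Henselian since $K$ is complete). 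Then $x \in K((t))^c(y)$ is integral over $K((t))^c$, as desired.

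\medskip

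\emph{Summary of structure.} So the proof runs: (1) easy direction via transfinite Newton polygon / convergence bookkeeping; (2) hard direction: apply Theorem~\ref{T:main} and Lemma~\ref{L:Krasner} to replace $x$ by a convergent $p$-quasi-automatic generator $y$ of the same extension; (3) observe $y$, its powers, and the coefficients $a_i$ in $x = \sum a_i y^i$ are all convergent and algebraic over $K(t)$; (4) show the minimal polynomial of $y$ has coefficients in $K((t))^c$ using that $K((t))^c$ is Henselian and convergence is stable under the relevant finite extensions; (5) conclude $x$ is integral over $K((t))^c$. I expect step (4) — controlling convergence of the conjugates/minimal polynomial over the analytic base — to be the genuine technical core; everything else is assembly from results already in the excerpt.
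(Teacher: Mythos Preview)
The paper's own proof is a single sentence: it cites Hensel's lemma for the implication ``integral over $K((t))^c$ $\Rightarrow$ positive radius of convergence'' and then says ``This proves the claim.'' So the paper explicitly treats only the inclusion you label ``easy'' and leaves the reverse inclusion entirely to the reader; your elaborate machinery for that direction has no counterpart in the paper.

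Your attempt at the reverse inclusion contains real gaps. First, the assertion that a $p$-quasi-automatic series with bounded support has ``coefficients in a fixed finite extension of $\FF_p$'' is false for general $K$: $p$-automaticity in this paper is defined via $\varphi$-semilinear algebra over $K$, not via finite automata over $\FF_p$, and the coefficients can be arbitrary elements of $K$. (The conclusion that $y$ and its powers have positive radius is still correct, but for the trivial reason that the definition is vacuous for series with bounded support.) Second, you invoke Corollary~\ref{C:base extend integrally closed} with $F = K(t)$ to conclude that the $a_i$ are integral over $K(t)$, but that corollary requires $x$ itself to be integral over $K(t)$, which is not assumed here---$x$ is only integral over $K((t))$. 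Third, your step (4) is not where the difficulty lies: Lemma~\ref{L:Krasner} already gives $y$ integral over $K(t) \subseteq K((t))^c$, so the minimal polynomial of $y$ over $K((t))^c$ trivially has coefficients there, and by Lemma~\ref{L:base extend integrally closed}(a) it remains irreducible over $K((t))$. What is genuinely missing is the passage from $x \in K((t))(y)$ to $x \in K((t))^c(y)$, i.e., showing that the coefficients $a_i \in K((t))$ in $x = \sum a_i y^i$ actually lie in $K((t))^c$; your chain of reasoning for this rests on the two errors above and so does not establish it.
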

\begin{proof}
One checks easily using Hensel's lemma that if $x$ is integral over $K((t))^c$,
then $x$ has positive radius of convergence. This proves the claim.
\end{proof}
The following corollary is new. Note that a special case was treated in Lemma~\ref{L:automatic AS} by directly identifying an algebraic relation satisfied by the transformed series; by contrast, in the general case it seems quite tricky to make such an identification. Note also that the condition on the support of $x$ is quite restrictive; compare Remark~\ref{R:bad support}.
\begin{cor}
For $i \in \ZZ[p^{-1}]_{\geq 0}$, let $\rev(i)$ be the function defined so that
$\rev(\|s\|) = \|\rev(s)\|$ for $s \in L_p$. Suppose that $x = \sum_i x_i t^i \in K((t^{\QQ}))$ is supported on a subset $S$ of $\ZZ[p^{-1}]_{\geq 0}$ such that $\rev(S)$ is also well-ordered. Then $x$ is algebraic over $K(t)$ if and only if $\sum_i x_{\rev(i)} t^i$ is.
\end{cor}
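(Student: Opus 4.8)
The plan is to pass to the automatic‑series picture, carry a biautomatic description of $x$ through the reversal $\rev$, and then show that the resulting (mildly nonstandard) description still forces algebraicity, by re‑running the argument of Lemma~\ref{L:automatic to algebraic}. First I would reduce to a single implication: since $\rev$ is an involution of $\ZZ[p^{-1}]_{\geq 0}$, the hypothesis that $S$ and $\rev(S)$ are both well‑ordered is symmetric, and $y := \sum_i x_{\rev(i)} t^i = \sum_j x_j t^{\rev(j)}$ is supported on $\rev(S)$, hence belongs to $K((t^{\QQ}))$; so it is enough to show that if $x$ is algebraic over $K(t)$ then so is $y$, applying this to $y$ in place of $x$ for the converse. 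Since $K((t^{\QQ}))$ is a field, ``algebraic over $K(t)$'' means ``integral over $K(t)$'', so by Theorem~\ref{T:integral closure of rational field} the assumption on $x$ is equivalent to $x \in K((t^{\QQ}))_{\aut}$, i.e.\ (as $x$ is supported on $\ZZ[p^{-1}]_{\geq 0}$) to $x$ being $p$‑automatic.

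Next I would transport a biautomatic structure through $\rev$. Choose a $\varphi$‑biautomatic $f : L_p \to \End_{\ZZ}(K)$ with $x_{\|s\|} = f(s)(1)$, and by Lemma~\ref{L:flatten composed} write $f(s_1 . s_2) = \pi \circ f_1(\rev(s_1)) \circ f_2(s_2) \circ \iota$ with $V'$ finite‑dimensional over $K$, $\iota : K \hookrightarrow V'$, $\pi : V' \twoheadrightarrow K$, $f_1$ a $\varphi$‑composed function with transition maps in $\End_\varphi(V')$, and $f_2$ a $\varphi^{-1}$‑composed function with transition maps in $\End_{\varphi^{-1}}(V')$. Since $\rev(s_1 . s_2) = \rev(s_2) . \rev(s_1)$ with $\rev(s_2) \in L_p^0$ and $\rev(s_1) \in \rev(L_p^0)$, evaluating $f$ at the string $\rev(s_1 . s_2)$ yields
\[
y_{\|s_1 . s_2\|} = x_{\|\rev(s_1 . s_2)\|} = \bigl(\pi \circ f_1(s_2) \circ f_2(\rev(s_1)) \circ \iota\bigr)(1) \qquad (s_1 \in L_p^0,\ s_2 \in \rev(L_p^0)).
\]
This has precisely the form of a biautomatic description of $y$, except that the integer part $s_1$ is now fed (reversed) into the $\varphi^{-1}$‑composed function $f_2$ and the fractional part $s_2$ into the $\varphi$‑composed function $f_1$, with the $\varphi$‑composed block on the left and the $\varphi^{-1}$‑composed block on the right: the two semilinearity types, together with their positions, are swapped relative to Definition~\ref{D:biautomatic}, so $y$ is not visibly $p$‑automatic.

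Finally I would show that such a ``reversed biautomatic'' series is still integral over $K(t)$, by imitating the proof of Lemma~\ref{L:automatic to algebraic}. Let $L$ be the integral closure of $K(t)$ in $K((t^{\QQ}))$ (an algebraically closed field), extend $\iota, \pi$ to $K((t^{\QQ}))$‑linear maps $\tilde\iota : K((t^{\QQ})) \to V' \otimes_K K((t^{\QQ}))$, $\tilde\pi : V' \otimes_K K((t^{\QQ})) \to K((t^{\QQ}))$, and — exactly as in that proof, but with $f_1$ now in the role of the ``fractional'' block and $f_2$ (on reversed strings) in the role of the ``integer'' block — assemble operators $G_1, G_2$ on $V' \otimes_K K((t^{\QQ}))$ from the $f_i$, equipped with the $t$‑adic twist that makes the integer block generate growing exponents, so that $y = \tilde\pi \circ G_1 \circ G_2 \circ \tilde\iota$ applied to $1$. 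Grouping strings by a boundary letter, each of $G_1, G_2$ then satisfies a fixed‑point identity of semilinear type $G = \id + M \circ G$ (or $G = \id + G \circ M$), where $M$, on a $K$‑basis of $V'$, has the shape $\xi \mapsto N\varphi(\xi)$ with $N$ a matrix over $K[t]$ and $\varphi$ the Frobenius of $K((t^{\QQ}))$. This is of the type resolved by Lemma~\ref{L:semilinear solution}, which is stated for endomorphisms in $\End_\varphi$ and in $\End_{\varphi^{-1}}$ alike; so the $\varphi^{-1}$‑semilinearity of $f_2$'s transition maps over $K$ causes no difficulty once it is absorbed into the exponent twist. Hence $G_1, G_2 \in \End_{\ZZ}(V' \otimes_K L)$ and $y \in L$, i.e.\ $y$ is algebraic over $K(t)$, as needed.

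The main obstacle is this $\varphi \leftrightarrow \varphi^{-1}$ swap: it blocks any formal argument that $y$ is itself $p$‑automatic, because the transpose‑and‑reverse moves afforded by Corollary~\ref{C:automatic reverse} can repair the semilinearity type or the string reversal, but not both at once, and so never return an honest biautomatic description. The way around it is to observe that the ``algebraic'' side of the theory, in contrast with the ``automatic'' side, does not feel the swap — Lemma~\ref{L:semilinear solution} was built for both semilinearity types — so one should bypass ``$y$ is $p$‑automatic'' and show directly that $y$ is integral over $K(t)$; the only real care needed is keeping the index sets $L_p^0$, $\rev(L_p^0)$ and the $t$‑adic conventions in the extensions compatible with the reversed structure.
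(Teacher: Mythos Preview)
Your proposal is correct and matches the paper's intent, though the paper compresses it to a single line (``immediate from Theorem~\ref{T:integral closure of rational field}''). What you have written is a faithful unpacking of that line: translate algebraicity of $x$ into $p$-automaticity, push the biautomatic data through $\rev$, and recover algebraicity of $y$ by rerunning the argument of Lemma~\ref{L:automatic to algebraic}.

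Your observation about the $\varphi \leftrightarrow \varphi^{-1}$ swap is accurate and is a detail the paper does not spell out. The reversed presentation $y_{\|s_1.s_2\|} = (\pi \circ f_1(s_2) \circ f_2(\rev(s_1)) \circ \iota)(1)$ indeed fails to fit Definition~\ref{D:biautomatic} on the nose, and the transpose-and-reverse symmetry of Corollary~\ref{C:automatic reverse} (and even the final sentence of Definition~\ref{D:biautomatic}) does not by itself produce an honest biautomatic description of $y$: one picks up a Frobenius twist depending on $\len(s_1)-\len(s_2)$. Your workaround --- bypass ``$y$ is $p$-automatic'' and go straight to ``$y$ is integral over $K(t)$'' --- is exactly right, and the reason it works is the one you give: the fixed-point argument behind Lemma~\ref{L:automatic to algebraic} treats the integer block and the fractional block symmetrically, because Lemma~\ref{L:semilinear solution} is stated for elements of $\End_\varphi(V) \cup \End_{\varphi^{-1}}(V)$. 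So swapping which block is $\varphi$-semilinear and which is $\varphi^{-1}$-semilinear does not disturb the conclusion that $G_1, G_2 \in \End_{\ZZ}(V' \otimes_K L)$, hence $y \in L$. That is the content of ``immediate.''
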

\begin{proof}
This is immediate from Theorem~\ref{T:integral closure of rational field}.
\end{proof}

The following is a theorem of Vaidya \cite[Lemma~4.1.1]{vaidya} generalizing an older result of Huang and Stef\u{a}nescu \cite[Theorem~2]{k-series1}.
\begin{cor}
The series $x = \sum_{i=1}^\infty x_i t^{p^{-i}} \in K((t^{\QQ}))$ is integral over $K((t))$ if and only if the sequence $\{x_i\}$ satisfies an LRR, in which case $x$ is also integral over $K(t)$.
\end{cor}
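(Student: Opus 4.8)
The plan is to reduce the statement to a criterion for $p$-automaticity, and then to set up a dictionary between the semilinear-algebraic data defining an automatic series and the coefficients of a linearized recurrence relation. Concretely: since $x$ has bounded support, Corollary~\ref{C:truncated algebraic to automatic} shows that $x$ integral over $K((t))$ implies $x$ is $p$-quasi-automatic; by Theorem~\ref{T:integral closure of rational field}, $x$ is $p$-quasi-automatic if and only if $x$ is integral over $K(t)$, which in turn implies $x$ integral over $K((t))$ since $K(t)\subseteq K((t))$. Hence all three conditions are equivalent, so the final clause of the corollary is automatic, and it remains to characterize $p$-quasi-automaticity in terms of an LRR. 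By Definition~\ref{D:p-quasi-automatic} and Lemma~\ref{L:any ab}, and using that the exponents in the support of $x$ all lie within an interval of length less than $1$, the series $x$ is $p$-quasi-automatic if and only if the translated series $\tilde x$ — with the same coefficient sequence $\{x_i\}$ but with exponents shifted to lie in $[0,1)\cap\ZZ[p^{-1}]$ — is $p$-automatic. Since every exponent occurring in $\tilde x$ has integer part zero, only the $\varphi^{-1}$-composed factor $f_2$ in Definition~\ref{D:biautomatic} plays a role, and one may take $f_1$ to be the identity at the empty string and $0$ elsewhere; thus $p$-automaticity of $\tilde x$ amounts to producing a $\varphi^{-1}$-semilinear endomorphism $\psi$ of a finite-dimensional $K$-vector space $W$, a vector $w_0\in W$, a linear functional $\ell$ on $W$, together with a dead-state rule on the "wrong" base-$p$ digits, such that $x_i=\ell(\psi^{i-1}(w_0))$ for all $i$. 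So it suffices to show such data exist if and only if $\{x_i\}$ satisfies a nontrivial LRR.

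For the implication starting from an LRR $\sum_{j=0}^{k} d_j x_{n+j}^{p^j}=0$ (with $d_k\neq 0$; the case $x=0$ being trivial), I would take $W=K^{k}$ with $\psi$ the $\varphi^{-1}$-semilinear endomorphism built from the companion matrix of the relation, arranged so that the state vectors $v_m=(x_{m+1},x_{m+2}^{p},\dots,x_{m+k}^{p^{k-1}})$ obey $v_{m+1}=\psi(v_m)$ while $x_{m+1}$ is the first coordinate of $v_m$. Taking $w_0=v_0$ and $\ell$ the first-coordinate functional, adjoining a one-dimensional summand that records the transition out of the initial state, and sending every "wrong" base-$p$ digit to $0$, one checks that the resulting $\varphi$-biautomatic function reproduces $\tilde x$; hence $\tilde x$ is $p$-automatic and, by Lemma~\ref{L:automatic to algebraic}, $x$ is integral over $K(t)$, a fortiori over $K((t))$.

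For the converse, unwind the biautomatic data (using Lemma~\ref{L:flatten composed} to assume genuinely composed functions) to obtain $x_i=\ell(\psi^{i-1}(w_0))$ as above. Replace $W$ by the $\psi$-stable finite-dimensional $K$-subspace $U$ spanned by the vectors $\psi^j(w_0)$, choose a basis, and write $\psi|_U$ in coordinates as $u\mapsto M\varphi^{-1}(u)$, so that the coordinate vector $\mathbf a_m$ of $\psi^m(w_0)$ equals $M\,\varphi^{-1}(M)\cdots\varphi^{-(m-1)}(M)\,\varphi^{-m}(\mathbf a_0)$. The crucial computation is that applying $\varphi^j$ to $x_{n+j}=\ell(\psi^{\,n+j-1}(w_0))$ and peeling off the $j$ leftmost matrix factors produces, for a covector $\beta_j$ not depending on $n$, the identity $x_{n+j}^{p^j}=\beta_j\,\mathbf a_{n-1}$ with $\beta_j=\varphi^j(\ell)\,\varphi^j(M)\,\varphi^{j-1}(M)\cdots\varphi(M)$ — the interleaved Frobenius twists are precisely what lets the common tail $\mathbf a_{n-1}$ factor out uniformly in $n$. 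Since $\beta_0,\dots,\beta_d$ with $d=\dim U$ are $d+1$ covectors in a $d$-dimensional dual space, some nontrivial combination $\sum_{j=0}^{d} c_j\beta_j$ vanishes, whence $\sum_{j=0}^{d} c_j x_{n+j}^{p^j}=\bigl(\sum_j c_j\beta_j\bigr)\mathbf a_{n-1}=0$ for all $n$, which is the desired LRR.

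The hard part is this last identity: checking that, despite the Frobenius twists interleaved through the iterated semilinear map, the leftmost factors of $\psi^{\,n+j-1}(w_0)$ genuinely separate off to leave a single $n$-independent covector applied to the fixed vector $\mathbf a_{n-1}$, so that the a priori $n$-dependent, Frobenius-twisted linear recursion among the state vectors collapses to an honest LRR. The "LRR $\Rightarrow$ automatic" direction and the reductions in the first paragraph are essentially bookkeeping, modulo keeping straight the digit-processing order built into Definition~\ref{D:biautomatic} and the normalization of the support.
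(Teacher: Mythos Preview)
Your argument is correct and follows the same route as the paper, which simply declares the result ``immediate from Theorem~\ref{T:integral closure of rational field} and Theorem~\ref{T:main}'' without further comment. You make the same reduction (bounded support forces the three integrality conditions to coincide, so everything comes down to ``$p$-automatic $\Leftrightarrow$ LRR'' for this particular support set), and then you explicitly unwind the semilinear companion-matrix/dual-space dictionary that the paper leaves to the reader; your computation $x_{n+j}^{p^j}=\beta_j\,\mathbf a_{n-1}$ with $\beta_j=\varphi^j(L)\,\varphi^j(M)\cdots\varphi(M)$ is exactly the right way to see the LRR emerge, and the companion construction in the other direction is fine (your extra one-dimensional summand and dead-state convention correctly force the biautomatic function to vanish off the intended support).
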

\begin{proof}
Immediate from Theorem~\ref{T:integral closure of rational field} and Theorem~\ref{T:main}.
\end{proof}

The following corollary generalizes a result of Deligne \cite{deligne} (extending a previous result of Furstenberg \cite{furstenberg}): the integral closure of $K(t)$ in $K((t))$ is stable under Hadamard products. (A more elementary proof has been given by
Sharif and Woodcock \cite{sharif-woodcock}.)
\begin{cor} \label{C:integral closure Hadamard}
The integral closure of $K(t)$ (resp.\ $K((t))$) in $K((t^{\QQ}))$ is stable under Hadamard products.
\end{cor}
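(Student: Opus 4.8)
For Corollary~\ref{C:integral closure Hadamard}, the plan is to deduce both statements from the descriptions of the integral closures supplied by Theorem~\ref{T:integral closure of rational field} and Theorem~\ref{T:main}, combined with the stability of automaticity under Hadamard products recorded in Remark~\ref{R:automatic properties}(b). First I would handle the integral closure of $K(t)$: by Theorem~\ref{T:integral closure of rational field} this is exactly $K((t^{\QQ}))_{\aut}$, so I must check that if $\sum_i x_i t^i$ and $\sum_i y_i t^i$ are both $p$-quasi-automatic, then so is $\sum_i x_i y_i t^i$. Using Lemma~\ref{L:any ab} I can rescale to assume both series are actually $p$-automatic (supported on $\ZZ[p^{-1}]_{\geq 0}$), at which point the claim is precisely Remark~\ref{R:automatic properties}(b): the termwise product of the two $\varphi$-biautomatic functions computing the coefficients is again $\varphi$-biautomatic by Remark~\ref{R:tensor product} (the tensor-product construction). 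One mild wrinkle is that rescaling $\sum x_i t^i$ and $\sum y_i t^i$ to be supported on $\ZZ[p^{-1}]_{\geq 0}$ uses possibly different parameters $(a,b)$, so I would first pass to a common $(a,b)$ working for both, which is harmless.

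Next I would treat the integral closure of $K((t))$, using the characterization in Theorem~\ref{T:main}. Write $x = \sum_i x_i t^i$ and $x' = \sum_i x'_i t^i$ for two elements of this integral closure, and let $y_n = \sum_{i \in [0,1)\cap\QQ} x_{n+i} t^i$ and $y'_n = \sum_{i \in [0,1)\cap\QQ} x'_{n+i} t^i$ be the associated families of ``windows''. The Hadamard product $x'' = \sum_i x_i x'_i t^i$ has windows $y''_n = \sum_{i \in [0,1)\cap\QQ} x_{n+i} x'_{n+i} t^i$, which is exactly the Hadamard product of $y_n$ and $y'_n$. By the $K(t)$ case already established, each $y''_n$ is $p$-quasi-automatic. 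It remains to verify that the $y''_n$ span a finite-dimensional $K$-vector space; this follows because the $K$-span of the $y''_n$ is contained in the $K$-span of $\{y_n\otimes y'_{n'}\}$ under the bilinear Hadamard pairing on windows, and since $\{y_n\}$ and $\{y'_{n'}\}$ each span finite-dimensional spaces, so does the image of their ``tensor'' under this bilinear map. (Concretely, if $y_n = \sum_k c_{n,k} v_k$ and $y'_n = \sum_\ell c'_{n,\ell} v'_\ell$ for fixed finite bases $\{v_k\}$, $\{v'_\ell\}$ and scalars $c_{n,k}, c'_{n,\ell} \in K$, then $y''_n = \sum_{k,\ell} c_{n,k} c'_{n,\ell}\, (v_k \odot v'_\ell)$, where $\odot$ denotes Hadamard product of windows, so the $y''_n$ all lie in the finite-dimensional space spanned by the finitely many products $v_k \odot v'_\ell$.)

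I expect the only real subtlety to be bookkeeping: reconciling the two parameter-triples when rescaling, and making precise that the window-assignment $x \mapsto (y_n)_n$ turns Hadamard products into Hadamard products of windows, which is immediate once one notes that $(n+i) + (n+i)$ is not what appears—rather, for each $i \in [0,1)\cap\QQ$ the coefficient of $t^i$ in $y''_n$ is $x_{n+i} x'_{n+i}$, the product of the $t^i$-coefficients of $y_n$ and $y'_n$. No new ideas beyond Theorem~\ref{T:integral closure of rational field}, Theorem~\ref{T:main}, Lemma~\ref{L:any ab}, and Remark~\ref{R:automatic properties}(b) are needed; the corollary is genuinely a formal consequence. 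I would write it up in about a paragraph.
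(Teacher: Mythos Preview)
Your proposal is correct and follows the paper's approach. The paper's proof is a one-line citation of Remark~\ref{R:automatic properties} and Theorem~\ref{T:integral closure of rational field} (resp.\ Theorem~\ref{T:main}); you have simply unpacked what that citation entails, including the common-$(a,b)$ bookkeeping and, for the $K((t))$ case, the bilinear-span argument showing the Hadamard windows $y''_n$ lie in the finite-dimensional space spanned by the products $v_k \odot v'_\ell$.
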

\begin{proof}
Immediate from Remark~\ref{R:automatic properties}
and Theorem~\ref{T:integral closure of rational field}
(resp.\ Theorem~\ref{T:main}).
\end{proof}

We now specialize to the case where $K = \overline{\FF}_p$. In this context, we have already recovered Christol's theorem (see Remark~\ref{R:recover Christol}).
We next recover Theorem~\ref{T:automata1} in a similar fashion.
\begin{remark} \label{R:recover automata1}
Let $F$ be a finite field. By Theorem~\ref{T:integral closure of rational field},
an element $x = \sum_i x_i t^i \in F((t^{\QQ}))$ is integral over $F(t)$ if and only 
it is $p$-quasi-automatic. By Lemma~\ref{L:compatibility}, this is equivalent to the conditions of Theorem~\ref{T:automata1}.
\end{remark}

We next recover \cite[Theorem~15]{k-series1}, and hence also \cite[Theorem~2]{k-series1}.
\begin{theorem} \label{T:finite field2}
A series $x = \sum_i x_i t^i \in \overline{\FF}_p((t^{\QQ}))$
is integral over $\overline{\FF}_p((t))$  if and only if the following conditions hold.
\begin{enumerate}
\item[(a)]
There exist $a,b,c$ such that $x$ is supported on $S_{a,b,c}$.
\item[(b)]
For some (hence any) $a,b,c$ as in (a),
there exist integers $M,N$ with the following property:
for each integer $m \geq -b$, the function $f_m: T_c \to K$ given by $f_m(z) = x_{(m+z)/a}$ has the property that every sequence as in \eqref{eq:TR sequences} becomes periodic of period $N$ after at most $M$ terms.
\end{enumerate}
\end{theorem}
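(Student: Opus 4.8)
The plan is to read off Theorem~\ref{T:finite field2} from Theorem~\ref{T:main} together with the fact that over $K=\overline{\FF}_p$ a $p$-quasi-automatic series is exactly one computed by a deterministic finite automaton with output (Lemma~\ref{L:compatibility}), and the observation that the sequences in \eqref{eq:TR sequences} are precisely the output traces of such an automaton along digit strings of the form ``fixed prefix, then a growing block of a single digit, then a fixed suffix''. Condition (a) is immediate in both directions: if $x$ is integral over $\overline{\FF}_p((t))$ then it is supported on some $S_{a,b,c}$ by Corollary~\ref{C:sabc}, while (a) is built into the statement of (b) and, implicitly (since every $p$-quasi-automatic series lies in some $S_{a,b,c}$), into Theorem~\ref{T:main}. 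Fixing $a,b,c$ as in (a) and using Lemma~\ref{L:any ab} and Remark~\ref{R:automatic properties} to see that both conditions are insensitive to the choice of $a$ and $b$, so that we may take $a=1$ and $x$ supported on $\ZZ[p^{-1}]_{\geq 0}$, one checks by a purely support-level bookkeeping (of the kind already present in \cite{k-series1}) that the functions $f_m\colon T_c\to K$ of Definition~\ref{D:twist-recurrent} and the windows $y_n=\sum_{i\in[0,1)\cap\QQ}x_{n+i}t^i$ of Theorem~\ref{T:main} carry the same data: the translation is a reindexing of $[0,1)\cap\QQ$ by digit reversal with the attendant base-$p$ borrow, under which $f_m$ corresponds to the part of $y_{m-1}$ supported on $(0,1)$, and each sequence \eqref{eq:TR sequences} acquires the form $c_n=(y_{m-1})_{\alpha_n}$ where the indices $\alpha_n$ have base-$p$ expansions built from a fixed prefix, a growing block of $0$'s, and a fixed suffix (finite, since the support is well-ordered).

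For the forward implication, suppose $x$ is integral over $\overline{\FF}_p((t))$. By Theorem~\ref{T:main} the $y_n$ are all $p$-quasi-automatic and span a finite-dimensional $K$-vector space; fixing a $K$-basis $w_1,\dots,w_s$ of that span chosen from among the $y_n$, and a common finite field $\FF_q$ over which $w_1,\dots,w_s$ are each computed by a DFAO (possible by Lemma~\ref{L:compatibility}), we may write $y_n=\sum_{i=1}^s a_{n,i}w_i$ with $a_{n,i}\in K$. For a DFAO over $\FF_q$ with state set $Q_i$ computing $w_i$, processing a string of the form (prefix)$\,0^{k}\,$(suffix) from a fixed state must enter a cycle of the $0$-transition subgraph, so the resulting output sequence is eventually periodic with preperiod and period at most $|Q_i|$, uniformly over all choices of prefix and suffix. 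Hence for fixed $m$ and fixed prefix and suffix the sequence $n\mapsto(w_i)_{\alpha_n}$ is eventually periodic with constants bounded by $\max_i|Q_i|$, independently of $m$ and of the chosen sequence; being a $K$-linear combination $c_n=\sum_i a_{m-1,i}(w_i)_{\alpha_n}$ of $s$ such sequences, the sequence \eqref{eq:TR sequences} is itself eventually periodic with preperiod at most $M:=\max_i|Q_i|$ and period at most $N$, the least common multiple of the periods that can occur, again independently of $m$. Thus (b) holds; it is essential here that the $a_{n,i}$ enter as plain scalars, so that no Frobenius twisting of individual coefficients intervenes.

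For the converse, suppose (a) and (b) hold. Fixing $m$, condition (b) provides, along each path of the above type, a uniformly bounded eventually periodic behavior; combined with the restriction that the support of $x$ be well-ordered, which forces the digit tree of $S_{1,0,c}$ traversed by the automaton to have a very constrained shape (in the spirit of Lemma~\ref{L:accumulation point} and of the twist-recurrence analysis of \cite{k-series1} and \cite{k-series-automata}), this suffices to exhibit a DFAO over $\overline{\FF}_p$ with at most a bounded number $B=B(M,N,p,c)$ of states computing the part of $y_{m-1}$ supported on $(0,1)$. There are only finitely many transition structures on at most $B$ states, and for a fixed transition structure the series obtained by varying the output function range over a $K$-vector space of dimension at most $B$; hence the parts of the $y_n$ supported on $(0,1)$ all lie in a single finite-dimensional $K$-vector space. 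Adjoining the line $K\cdot 1$ that accounts for the coefficients $x_n=(y_n)_0$, we conclude that the $y_n$ span a finite-dimensional $K$-vector space; each $y_n$ is $p$-quasi-automatic by this construction together with Lemma~\ref{L:compatibility}, so Theorem~\ref{T:main} shows that $x$ is integral over $\overline{\FF}_p((t))$, completing the equivalence.

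The main obstacle I anticipate is precisely the reconstruction in the converse: passing from ``uniformly bounded preperiod and period along the specific family of paths \eqref{eq:TR sequences}'' to an honest DFAO, of uniformly bounded size, computing $y_{m-1}$ on the whole of its support. This is where one must use the rigidity imposed on a DFAO by the well-ordered-support condition, and it constitutes the part of the argument of \cite[Theorem~8]{k-series1} (and likewise \cite[Theorem~15]{k-series1}) which remains valid for $K=\overline{\FF}_p$. By contrast, all of the algebraicity content has been absorbed into Theorem~\ref{T:main}, so no further analysis of Artin--Schreier extensions or of curves is required here.
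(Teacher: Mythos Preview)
Your approach aligns with the paper's: the paper's entire proof is the single line ``This is immediate from Theorem~\ref{T:integral closure of rational field},'' and you have unpacked what ``immediate'' must mean by routing through Theorem~\ref{T:main} and Lemma~\ref{L:compatibility}. If anything your citation is the more apt one, since the statement concerns the integral closure of $K((t))$ rather than of $K(t)$.

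Your forward direction is sound. One minor bookkeeping correction: when you pass from $f_m$ (indexed by $z\in T_c\subset(-1,0)$) to coefficients of $y_{m-1}$ (indexed by $\alpha=1+z\in(0,1)$), the borrow in computing $1-(-z_n)$ turns the growing block of $0$'s in the digit string of $-z_n$ into a growing block of $(p{-}1)$'s in the base-$p$ expansion of $\alpha_n$. This does not affect the argument---iterating any fixed transition on a finite automaton eventually cycles---but the ``growing block of $0$'s'' you describe is literally a block of $(p{-}1)$'s once you are on the $y_{m-1}$ side.

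You are right that the only substantive content lies in the converse step you flag, and the paper does not supply it either; you are not missing anything the paper provides. The gap can in fact be filled directly rather than by appeal to \cite{k-series1}. Writing $g$ for the function on digit strings corresponding to $f_m$, condition~(b) says $g(u\,0^{n}\,w)=g(u\,0^{n+N}\,w)$ for all $n\ge M$ and \emph{all} prefixes $u$ and suffixes $w$; taking $w$ arbitrary shows $u\,0^{M}$ and $u\,0^{M+N}$ are prefix-equivalent for every $u$, so any zero-block (internal or terminal) of length $\ge M{+}N$ may be shortened by $N$. Since the digit-sum bound $\sum b_i\le c$ forces at most $c$ nonzero digits and hence at most $c{+}1$ zero-blocks, every prefix class has a representative of total length bounded by a function of $M,N,p,c$ alone; the Myhill--Nerode theorem (Proposition~\ref{P:Myhill-Nerode}) then yields a DFAO of the required uniformly bounded size, and your steps (2)--(6) go through.
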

\begin{proof}
This is immediate from Theorem~\ref{T:integral closure of rational field}.
\end{proof}

We next recover the statement of \cite[Theorem~8]{k-series1} in the case $K = \overline{\FF}_p$.
\begin{theorem} \label{T:finite field}
A series $x = \sum_i x_i t^i \in \overline{\FF}_p((t^{\QQ}))$
is integral over $\overline{\FF}_p((t))$  if and only if it is twist-recurrent in the sense of Definition~\ref{D:twist-recurrent}.
\end{theorem}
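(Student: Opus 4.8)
The plan is to read off Theorem~\ref{T:finite field} from Theorem~\ref{T:finite field2} and Theorem~\ref{T:main}: twist-recurrence is essentially a repackaging of the conditions appearing in those results, the only substantive point being that over $K=\overline{\FF}_p$ one can always convert ``eventually periodic'' into ``satisfies a fixed LRR''. The implication from twist-recurrence to integrality over $\overline{\FF}_p((t))$ is already Proposition~\ref{P:TR algebraic}, so I would spend all the effort on the converse. Assume $x=\sum_i x_i t^i$ is integral over $\overline{\FF}_p((t))$; I must verify conditions (a), (b), (c) of Definition~\ref{D:twist-recurrent}. Condition (a) is immediate from Theorem~\ref{T:finite field2}(a) (or Corollary~\ref{C:sabc}); I would fix such parameters $a,b,c$ once and for all.

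For condition (c), I would invoke Theorem~\ref{T:main}: the series $y_n=\sum_{i\in[0,1)\cap\QQ} x_{n+i}t^i$ span a finite-dimensional $K$-vector space. It then remains to transcribe this into a statement about the functions $f_m$ of Definition~\ref{D:twist-recurrent}. Since $f_m(z)=x_{(m+z)/a}$ records the coefficients of $x$ on the interval $((m-1)/a,m/a)$, which has length $1/a\le 1$ and hence meets at most two of the unit intervals $[n,n+1)$, each $f_m$ is obtained from the pair $(y_{n(m)},y_{n(m)+1})$ by a $K$-linear ``read off the relevant coefficients'' map whose shape depends on $m$ only through $m\bmod a$; there are thus finitely many such maps, and a finite-dimensional space stays finite-dimensional under them, so the $f_m$ span a finite-dimensional space. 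This transcription between the two indexings is the fiddliest bookkeeping in the argument, although it is conceptually routine.

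Condition (b) is the heart of the matter, and here $K=\overline{\FF}_p$ is used decisively. First I would note that $x$, being integral over $\overline{\FF}_p((t))$, is in fact integral over $\FF_p((t))$ --- its minimal polynomial over $\overline{\FF}_p((t))$ has coefficients involving only finitely many elements of $\overline{\FF}_p$, so $x$ is integral over $\FF_q((t))$ for some finite $q$, and $\FF_q((t))/\FF_p((t))$ is a finite extension --- so by Corollary~\ref{C:perfect coefficient subfield} all the $x_i$ lie in one finite field $\FF_{p^g}$. Next, Theorem~\ref{T:finite field2}(b) supplies integers $M,N$ (uniform in $m$) such that every sequence $\{c_n\}$ of the form \eqref{eq:TR sequences} attached to any $f_m$ becomes periodic of period $N$ after at most $M$ terms. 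Setting $N'=\operatorname{lcm}(N,g)$, I would observe that for every such sequence and every $n\ge 0$ one has $c_{n+M}=c_{n+M+N'}$ (as $N'$ is a multiple of $N$ and $n+M\ge M$) and $c_{n+M}^{p^{N'}}=c_{n+M}$ (as $c_{n+M}\in\FF_{p^g}$ and $g\mid N'$), whence
\[
c_{n+M+N'}^{p^{M+N'}} = c_{n+M}^{p^{M+N'}} = \bigl(c_{n+M}^{p^{N'}}\bigr)^{p^M} = c_{n+M}^{p^M},
\]
so that $\{c_n\}$ satisfies the LRR \eqref{eq:LRR} of length $M+N'$ with $d_M=-1$, $d_{M+N'}=1$, and all other $d_j=0$; in particular every $f_m$ is twist-recurrent (indeed of order $\le M+N'$, uniformly in $m$). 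This establishes (b) and completes the verification.

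The one genuinely mathematical obstacle is the step just described: manufacturing a single LRR satisfied by all the relevant sequences. This is impossible for general $K$ --- it is exactly the failure exhibited in Example~\ref{exa:not TR}, where the values of $f$ lie in no finite subfield of $K$ so that no suitable $g$ exists --- and Theorem~\ref{T:finite field} amounts to the assertion that, when $K=\overline{\FF}_p$, this is the only obstruction. I expect everything else, including the index bookkeeping for (c), to be routine.
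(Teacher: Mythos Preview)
Your overall strategy is sound and matches the paper's intent (the paper's one-line proof cites Theorem~\ref{T:finite field2}, and you are fleshing out what that citation entails, together with Theorem~\ref{T:main} for condition~(c)). However, there is a genuine error in your argument for condition~(b).

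You assert that since $x$ is integral over $\overline{\FF}_p((t))$, its minimal polynomial ``has coefficients involving only finitely many elements of $\overline{\FF}_p$,'' so $x$ is integral over some $\FF_q((t))$ and hence all $x_i$ lie in a single finite field. This is false: each coefficient of the minimal polynomial is a Laurent series over $\overline{\FF}_p$, which may involve infinitely many distinct elements of $\overline{\FF}_p$. Concretely, take $y=\sum_{n\ge 1}\zeta_n t^n\in\overline{\FF}_p[[t]]$ with $\zeta_n$ a generator of $\FF_{p^n}$, and let $x=-(y+y^p+y^{p^2}+\cdots)$, so $x^p-x=y$. Then $x$ is integral over $\overline{\FF}_p((t))$, but for $m$ coprime to $p$ one has $x_m=-\zeta_m$, so the $x_m$ do not lie in any single finite field. (This example is harmless for the theorem itself, since $x\in\overline{\FF}_p[[t]]$ and twist-recurrence is vacuous for ordinary power series, but it invalidates your justification.)

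The fix is local to each $m$. You already observed, in your treatment of~(c), that the values of $f_m$ are coefficients of $y_n$ for at most two consecutive integers $n$. By Theorem~\ref{T:main} each such $y_n$ is $p$-quasi-automatic, hence integral over $\overline{\FF}_p(t)$; its minimal polynomial over $\overline{\FF}_p(t)$ is a polynomial with finitely many \emph{rational function} coefficients, each involving only finitely many elements of $\overline{\FF}_p$, so $y_n$ is integral over some $\FF_{q_n}(t)$ and by Corollary~\ref{C:perfect coefficient subfield} its coefficients lie in a finite field. Thus for each $m$ the image of $f_m$ lies in a finite field $\FF_{p^{g_m}}$ (depending on $m$), and your LRR construction goes through with $N'_m=\operatorname{lcm}(N,g_m)$. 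Since Definition~\ref{D:twist-recurrent}(b) only requires each $f_m$ to admit \emph{some} LRR (not a uniform one), this suffices.
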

\begin{proof}
This is immediate from Theorem~\ref{T:finite field}.
\end{proof}

\section{Zero sets} \label{sec:zero sets}

The Skolem-Mahler-Lech theorem asserts that the zero set of a linear recurrent sequence over a field of characteristic zero consists of a finite union of arithmetic progressions plus a finite set. Over a field of characteristic $p$, such a zero set can be more complicated; for example, the sequence $\{(1+t)^n - 1 - t^n\}_{n=0}^\infty$ over $\FF_p(t)$ has zero set $\{1,p,p^2,\dots\}$. The zero sets of linear recurrent sequences in characteristic $p$ have been classified by Derksen \cite[Theorem~1.8]{derksen}; an intriguing feature of this classification is that every zero  set is the image under $\left| \bullet \right|$ of a regular language in $L_p^0$.

It has been observed by Adamczewski and Bell \cite{adamczewski-bell2} that such a statement also holds for the coefficients of algebraic power series. To prepare for the discussion of the corresponding statement for generalized power series, it is useful to formulate a stronger multivariate statement from \cite{adamczewski-bell2}, then give an alternate proof of the latter.

\begin{defn}
For $k$ a positive integer, let $L_p^{0,k} \subset (\Sigma_p^k)^*$ be the set of strings not beginning with $(0,\dots,0)$; this is evidently a regular language. For $s = (s_0,\dots,s_n) \in L_p^{0,k}$, define
\[
\left| s \right| = \left(\sum_{j=0}^n s_{j,h} p^{n-j} \right)_{h=1}^k;
\]
this defines a bijection $\left| \bullet \right|: L_p^{0,k} \to \ZZ_{\geq 0}^k$.
\end{defn}

The following is \cite[Theorem~1.4]{adamczewski-bell2}, but with a different proof.
\begin{theorem} \label{T:derksen}
For any $x = \sum_{n_1,\dots,n_k=0}^\infty x_{n_1,\dots,n_k} t_1^{n_1} \cdots t_k^{n_k} \in K \llbracket t_1,\dots,t_k \rrbracket$ which is integral over $K(t_1,\dots,t_k)$,
the set $\{(n_1,\dots,n_k) \in \ZZ_{\geq 0}: x_{n_1,\dots,n_k} = 0\}$ is the image under $\left| \bullet \right|$ of a regular language in $L_p^{0,k}$.
\end{theorem}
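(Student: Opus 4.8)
The plan is to realise the coefficient array of $x$ as the output of a finite automaton whose states live in a finite-dimensional $K$-vector space, by transporting the decimation construction behind Theorem~\ref{T:gen Christol} to several variables, and then to descend on the transcendence degree of the field generated by the finitely many constants involved until that field becomes finite, where the automaton is genuinely finite and the zero set is visibly regular.

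First I would set up multivariate decimation. Since replacing $x$ by $\varphi^n(x)$ only rescales the zero set by $p^n$ (it appends $n$ all-zero columns to the base-$p$ encodings), which preserves the conclusion, we may assume $x$ lies in a finite separable extension $L$ of $K(t_1,\dots,t_k)$. Writing $\varphi(L)$ for the image of $L$ under the $p$-power map, the extension $K(t_1,\dots,t_k)$ of $\varphi(K(t_1,\dots,t_k))$ is purely inseparable and linearly disjoint from the separable extension $\varphi(L)$, so the monomials $t_1^{i_1}\cdots t_k^{i_k}$ with $(i_1,\dots,i_k)\in\Sigma_p^k$ are a basis of $L$ over $\varphi(L)$, yielding $\varphi^{-1}$-semilinear operators $s_{\mathbf i}\colon L\to L$ with
\[
y=\sum_{\mathbf i\in\Sigma_p^k} s_{\mathbf i}(y)^p\,t_1^{i_1}\cdots t_k^{i_k}\qquad(y\in L).
\]
Exactly as in the proof of Theorem~\ref{T:gen Christol} — now bounding orders along the finitely many prime divisors on a projective model of $L$ over $K$ that behave badly for $x$ or for the relevant differential forms — the minimal $K$-subspace $V\subseteq L$ that contains $x$ and is stable under all the $s_{\mathbf i}$ is finite-dimensional over $K$. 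Iterating the displayed identity shows that, for $\mathbf n\in\ZZ_{\ge 0}^k$ with base-$p$ digit string $\mathbf w=\mathbf w_1\cdots\mathbf w_\ell\in L_p^{0,k}$,
\[
x_{\mathbf n}=c\bigl(s_{\mathbf w_1}\circ\cdots\circ s_{\mathbf w_\ell}(x)\bigr)^{p^{\ell}},
\]
where $c\colon V\to K$ extracts the coefficient of $t_1^0\cdots t_k^0$; in particular $x_{\mathbf n}=0$ if and only if $c(s_{\mathbf w_1}\circ\cdots\circ s_{\mathbf w_\ell}(x))=0$. Thus the zero set of $x$ is the language accepted by the ``automaton'' with (a priori infinite) state set $V$, transitions $s_{\mathbf i}$, initial state $x$, and accepting states $\ker c$.

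Next I would descend on transcendence degree. Fixing a $K$-basis of $V$, the entries of the matrices representing the $s_{\mathbf i}$ and the coordinates of $x$ and of $c$ lie in a finitely generated subfield $K_0$ of $K$; the states reachable from $x$ then have coordinates in the perfect closure $K_0^{\mathrm{perf}}$, and since vanishing of $x_{\mathbf n}$ — hence the zero set as a subset of $\ZZ_{\ge 0}^k$, hence its regularity — is unchanged by this restriction of scalars, we may assume $K=K_0^{\mathrm{perf}}$. Put $d=\operatorname{trdeg}(K_0/\FF_p)$ and induct on $d$. If $d=0$ then $K_0^{\mathrm{perf}}=K_0$ is a finite field, only finitely many states are reachable from $x$, the automaton is genuinely finite, and its accepted language is regular. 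If $d\ge 1$, choose $\theta\in K_0$ transcendental over a subfield $K_1$ with $K_0/K_1(\theta)$ finite and $\operatorname{trdeg}(K_1/\FF_p)=d-1$. A routine clearing-of-denominators argument — multiply the minimal polynomial of $x$ over $K_1(t_1,\dots,t_k,\theta)$ to make $x$ integral over $K_1[t_1,\dots,t_k,\theta]$ off the zero locus of a discriminant, then induct on the $\mathbf t$-degree — yields a single nonzero $h\in K_1[\theta]$ with $x_{\mathbf n}\in K_1[\theta][h^{-1}]$ for all $\mathbf n$. Expanding each $x_{\mathbf n}$ as a Taylor series in $\theta$ about a point of $\mathbb{A}^1$ off $h=0$ defined over a finite extension $K_1'$ of $K_1$ produces a power series $X=\sum_{\mathbf n,m}X_{\mathbf n,m}\,t_1^{n_1}\cdots t_k^{n_k}\theta^m$ over $K_1'$, algebraic over $K_1'(t_1,\dots,t_k,\theta)$, with $x_{\mathbf n}=0$ if and only if $X_{\mathbf n,m}=0$ for all $m\ge 0$. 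By the inductive hypothesis the zero set of $X$ is the image under $\left|\bullet\right|$ of a regular language in $L_p^{0,k+1}$, so $\{\mathbf n:x_{\mathbf n}=0\}$ is the set of $\mathbf n$ all of whose base-$p$ encodings, filled out with an arbitrary extra coordinate, lie in that language; this is regular by Lemma~\ref{L:quantifier elimination} applied to the projection $\Sigma_p^{k+1}\to\Sigma_p^k$, together with the standard closure of regular languages under padding by leading zeros.

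I expect the descent to be the main obstacle. The multivariate decimation is a line-by-line transcription of the proof of Theorem~\ref{T:gen Christol}, and the base case of the induction is immediate; but the transcendental-elimination step requires the uniform-denominator claim (a genuine, if routine, piece of commutative algebra) and careful base-$p$ bookkeeping, since a single finite string can only witness the quantifier ``for all $m\ge 0$'' for $m<p^{\ell}$, so one must arrange the leading-zero padding so that this becomes a genuine regular condition. An alternative to the elimination step is a specialization argument in the spirit of \cite{adamczewski-bell2} and \cite{derksen}: one shows that the zero set is detected by suitably many specializations of $\theta$ into fields of smaller transcendence degree and reassembles the answer, again bottoming out at a finite field.
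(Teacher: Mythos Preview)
Your argument is correct and shares the paper's overall architecture --- reduce the constants to a finitely generated field, introduce auxiliary variables encoding those constants, bottom out at a finite coefficient field where automaticity is literal, and project away the auxiliary variables via Lemma~\ref{L:quantifier elimination} --- but the two implementations diverge in a way worth recording.

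The paper collapses your transcendence-degree induction into a single step. Rather than Taylor-expanding in one transcendental at a time, it uses Noether normalization to write $K_0$ as a finite extension of $K_1 = \FF_p(z_1,\dots,z_m)$, and then replaces each coefficient $x_{\mathbf n}$ by its norm from the normal closure of $K_0$ down to $K_1$. The normed series has the same zero set and is still algebraic (by closure of algebraic multivariate series under Hadamard products, via \cite{sharif-woodcock}), but now its coefficients lie in $\FF_p(z_1,\dots,z_m)$ --- and after a rescaling of the $t_i$, in $\FF_p[z_1,\dots,z_m]$. One then regards the result as a series over $\FF_p$ in $k+m$ variables, invokes Salon's multivariate Christol theorem \cite{salon} as a black box, and applies Lemma~\ref{L:quantifier elimination} $m$ times. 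This bypasses your explicit decimation, your uniform-denominator step, and the leading-zero bookkeeping you flag as the main obstacle.

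What your route buys is self-containment: you re-prove the ``algebraic implies automatic'' direction of Salon rather than citing it, and your decimation set-up already delivers the uniform-denominator claim essentially for free, since each $x_{\mathbf n}$ is visibly a polynomial over $\FF_p$ in the finitely many matrix entries of the $s_{\mathbf i}$ together with the coordinates of $x$ and of $c$. The norm trick is shorter but imports \cite{sharif-woodcock} and \cite{salon}; your induction is longer but uses nothing beyond what is developed internally.
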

\begin{proof}
Note that the $x_{n_1,\dots,n_k}$ all belong to some finitely generated subfield $K_0$ of $K$; by Noether normalization, $K_0$ is finite over some subfield $K_1$ of the form $\FF_p(z_1,\dots,z_m)$. Let $L$ be the normal closure of $K_0$ over $K_1$. The set of elements of $K \llbracket t_1,\dots,t_k \rrbracket$ integral over $K(t_1,\dots,t_k)$ is closed under Hadamard products
\cite{sharif-woodcock}, so by replacing each $x_{n_1,\dots,n_k}$ with its norm from $L$ to $K_1$ we may reduce to the case $K_0 = K_1$. By rescaling $t$, we may further reduce to the case where the $x_{n_1,\dots,n_k}$ all belong to $\FF_p[z_1,\dots,z_m]$.
We may then view $x$ as a series
\[
\sum_{n_1,\dots,n_k,n'_1,\dots,n'_m=0}^\infty
x_{n_1,\dots,n_k,n'_1,\dots,n'_m} t_1^{n_1} \cdots t_k^{n_k} z_1^{n'_1} \cdots z_m^{n'_m}
\in
\FF_p \llbracket t_1,\dots,t_k, z_1,\dots,z_m \rrbracket
\]
which is integral over $\FF_p(t_1,\dots,t_k,z_1,\dots,z_m)$.
By Salon's multivariate analogue of Christol's theorem \cite{salon},
the latter is $p$-automatic; that is, the function $L_p^{0,k+m} \to \FF_p$ taking $s$ to $x_{|s|}$ is a finite-state function.
In particular, the set $\{s \in L_p^{0,k+m}: x_{|s|} = 0\}$ is a regular language;
we may conclude from this point by repeated application of Lemma~\ref{L:quantifier elimination}.
\end{proof}

Note that even when $k=1$, the proof of Theorem~\ref{T:derksen} involves series in more than one variable. Consequently, to make an analogous argument for generalized power series, we must introduce some notation about multivariate series; we give only a limited treatment here, as a complete ``algebraic equals automatic'' theorem in this context is somewhat difficult to formulate (see the discussion at the end of \cite{k-series-automata}).

\begin{defn}
For $k$ a positive integer, let $L_p^k \subseteq (\Sigma_p^k \cup \{.\})^*$ be the set of the strings of the form $s_1.s_2$ with $s_1 \in L_p^{0,k}$, $s_2 \in \rev(L_p^{0,k})$.
For $s = (s_0,\dots,s_n) \in L_p^k$, let $i$ be the unique index for which $s_i = .$, and define
\[
\left\| s \right\| = \left( \sum_{j=0}^{i-1} s_{j,h} p^{i-j-1} + \sum_{j=i+1}^n s_{j,h} p^{i-j} \right)_{h=1}^k
\]
\end{defn}

\begin{theorem} \label{T:zero set}
For $x = \sum_i x_i t^i \in K((t^{\QQ}))$ integral over $K(t)$ with support in $\ZZ[p^{-1}]_{\geq 0}$, the set 
$\{i \in \ZZ[p^{-1}]_{\geq 0}: x_i = 0\}$
is the image under $\left\| \bullet \right\|$ of a regular language.
\end{theorem}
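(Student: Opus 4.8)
The plan is to reduce to the case of a finite coefficient field, where the biautomatic machinery degenerates into a genuine finite automaton, and then to descend along the added variables.

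First, by Theorem~\ref{T:integral closure of rational field} together with the hypothesis that $x$ is supported on $\ZZ[p^{-1}]_{\geq 0}$, $x$ is $p$-automatic. Since $\|\bullet\|\colon L_p \to \ZZ[p^{-1}]_{\geq 0}$ is a bijection, it suffices to show that $L' := \{s \in L_p : x_{\|s\|} = 0\}$ is a regular language. If $K = \overline{\FF}_p$ this is immediate: by Lemma~\ref{L:compatibility} and Theorem~\ref{T:automata1} the map $L_p \to \overline{\FF}_p$ sending $s$ to $x_{\|s\|}$ is a finite-state function, so its fibre over $0$ is regular. The content is therefore the passage from general $K$ to this case.

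To descend the coefficient field, I would argue as in the proof of Theorem~\ref{T:derksen}. Since $x$ is integral over $K(t)$, Corollary~\ref{C:imperfect coefficient subfield}(a) places its coefficients in $F'^{1/p^\infty}$ for a finitely generated subfield $F' \subseteq K$; after Noether normalization we may take $F'$ finite over $E := \FF_p(\lambda_1,\dots,\lambda_m)$ with the $\lambda_j$ algebraically independent, so that $F'^{1/p^\infty}$ is finite separable over $E^{1/p^\infty}$. Each automorphism $\sigma$ of the normal closure of $F'^{1/p^\infty}/E^{1/p^\infty}$ extends to an automorphism of the algebraically closed field $K$, so applying it coefficientwise to $x$ yields a series still integral over $K(t)$; by Corollary~\ref{C:integral closure Hadamard} the Hadamard product $\hat x$ of all these conjugates is again integral over $K(t)$. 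Its $i$-th coefficient is $\prod_\sigma \sigma(x_i)$, which lies in $E^{1/p^\infty}$ and vanishes exactly when $x_i$ does; hence $\hat x$ has the same zero set as $x$, has coefficients in the perfect field $E^{1/p^\infty}$, and by Corollary~\ref{C:perfect coefficient subfield} is integral over $E^{1/p^\infty}(t)$. Replacing $x$ by $\hat x$, we may assume $x \in E^{1/p^\infty}((t^\QQ))$ is integral over $E^{1/p^\infty}(t)$, hence also over the imperfect field $E(t)$.

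Next I would trade the transcendentals $\lambda_j$ and their $p$-power roots for honest variables. Applying Corollary~\ref{C:imperfect coefficient subfield}(b) over $E$ bounds, for each $i$, the least $f_i$ with $x_i^{p^{f_i}} \in E$ in terms of $v_p(i)$. After clearing denominators (rescaling $t$ by a coherent family of roots of a suitable element of $E$, as in Theorem~\ref{T:derksen}) and expanding each coefficient as a generalized power series in $\lambda_1,\dots,\lambda_m$, this bound guarantees that the joint support of $x$ in the variables $(t,\lambda_1,\dots,\lambda_m)$ is well-ordered for the $t$-first lexicographic order; so $x$ becomes a genuine multivariate generalized power series over $\FF_p$ which is integral over $\FF_p(t,\lambda_1,\dots,\lambda_m)$. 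Over this finite base field the biautomatic data can be taken with entries in $\FF_p$, so the coefficient function $L_p^{1+m} \to \FF_p$ is literally a finite-state function (the multivariate analogue of Theorem~\ref{T:automata1}), and its zero set is a regular language $L'' \subseteq L_p^{1+m}$. Finally, $x_i = 0$ precisely when every string of $L_p^{1+m}$ whose $t$-coordinate is $s$ lies in $L''$, so applying Lemma~\ref{L:quantifier elimination} once for each of $\lambda_1,\dots,\lambda_m$ (each time with a universal quantifier, i.e.\ replacing a language by the complement of the projection of its complement) produces a regular language in $L_p$ whose image under $\|\bullet\|$ is exactly $\{i : x_i = 0\}$.

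I expect the main obstacle to be the third step: making the passage to a multivariate series over $\FF_p$ precise. One must check that, after clearing denominators and expanding, the joint support really stays well-ordered — this is exactly where the quantitative bound of Corollary~\ref{C:imperfect coefficient subfield}(b) between the imperfection depth of $x_i$ and $v_p(i)$ is indispensable — and one must have a usable multivariate form of the automaticity theorem, which the paper notes is awkward in full generality; the saving grace is that over $\FF_p$ the state set of the biautomatic data is automatically finite (so one may, if preferred, argue directly with that data rather than invoke a multivariate theorem), whereas for general $K$ the biautomatic data genuinely has infinitely many reachable states and no finite automaton for the zero set exists a priori.
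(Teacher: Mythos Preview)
Your first two steps---reducing to coefficients in the perfect closure of $\FF_p(\lambda_1,\dots,\lambda_m)$ via norms (Hadamard products of Galois conjugates) and then aiming to view $x$ as a multivariate generalized series over $\FF_p$---match the paper's proof essentially verbatim. The divergence, and the genuine gap you yourself flag, is the third step: you need the multivariate coefficient function $L_p^{1+m}\to\FF_p$ to be finite-state, and you propose either invoking a multivariate analogue of Theorem~\ref{T:automata1} (which the paper explicitly declines to formulate) or observing that ``over $\FF_p$ the state set of the biautomatic data is automatically finite.'' The latter does not work as stated: the biautomatic presentation you have in hand is for $x$ as a \emph{univariate} series over $K$ (or over $E^{1/p^\infty}$ after descent), so its vector space $V'$ is over that infinite field, not over $\FF_p$; finiteness of the state set is exactly what is at stake, and you cannot read it off. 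The bound from Corollary~\ref{C:imperfect coefficient subfield}(b) does control the joint support, but it does not by itself produce a finite automaton computing the $\FF_p$-coefficients.

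The paper closes this gap by a more hands-on route that avoids any general multivariate automaticity statement. After the reduction to $K_0=\FF_p(z_1,\dots,z_m)$, it invokes Lemma~\ref{L:Krasner} to find a truncation $y$ with $x\in K((t))(y)$, writes $K((t))(y)$ as a tower of Artin--Schreier extensions, and rescales $t$ so that these extensions lift to \'etale extensions of the subring $S\subset K((t))$ of series with coefficients in $\FF_p[z_1,\dots,z_m]^{1/p^\infty}$. This manoeuvre separates the variables: the $z_j$ now appear only through ordinary (not generalized) power series, so Salon's multivariate Christol theorem handles them, while the generalized variable $t$ is handled by rerunning the transfinite Newton argument of Lemma~\ref{L:truncated algebraic to automatic}. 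Combining the two yields finite-state-ness of $s\mapsto x_{\|s\|}$ on $L_p^{m+1}$, after which your final quantifier-elimination step (Lemma~\ref{L:quantifier elimination}) finishes exactly as you describe. So your outline is correct in spirit, but the missing ingredient is precisely this decomposition via Krasner and Artin--Schreier towers, not a black-box multivariate theorem.
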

\begin{proof}
By Corollary~\ref{C:perfect coefficient subfield},
the $x_i$ all belong to the perfect closure of some finitely generated subfield $K_0$ of $K$; by Noether normalization, $K_0$ is finite over some subfield $K_1$ of the form $\FF_p(z_1,\dots,z_m)$. Let $L$ be the normal closure of $K_0$ over $K_1$. 
By Corollary~\ref{C:integral closure Hadamard},
replacing each $x_i$ with its norm from the perfect closure of $L$ to the perfect closure of $K_1$ yields another element of $K((t^{\QQ}))$ integral over $K(t)$, so we may assume hereafter that $K_0 = K_1$. 

By Lemma~\ref{L:Krasner}, there exists a truncation $y$ of $x$ that is $p$-automatic
and integral over $K(t)$, and moreover has the property that $x$ and $y$ generate the same finite extension of $K((t))$.
Since $x \in K((t))(y)$, we may apply Corollary~\ref{C:base extend integrally closed}
to write $x$ as a polynomial in $y$ whose coefficients belong  to the integral closure of $K(t)$ in $K((t))$.

Let $S$ be the subring of $K((t))$ consisting of series whose coefficients belong to the perfect closure of $\FF_p[z_1,\dots,z_m]$.
Write $K((t))(y)$ as a tower of Artin-Schreier extensions of $K((t))$;
by rescaling $t$, we may further reduce to the case where $x \in S$ and these Artin-Schreier extensions lift to \'etale extensions of $S$.
Write $x$ as a formal sum
\[
\sum_{i,i'_1,\dots,i'_m \in \ZZ[p^{-1}]_{\geq 0}}
x_{i, i'_1,\dots,i'_m} t^i z_1^{i'_1} \cdots z_m^{i'_m}
\]
with coefficients in $\FF_p$; we may then argue as in the proof of 
Lemma~\ref{L:truncated algebraic to automatic} to see that
for any $r \in \RR$, the function
$L_p^{m+1} \to \FF_p$ taking $s$ to $x_{\left\|s \right\|}$ if the first component of $\left\|s \right\|$ is at most $r$
and 0 otherwise is a finite-state function.
Combining this result with Salon's theorem, we may deduce that
the function
$L_p^{m+1} \to \FF_p$ taking $s$ to $x_{\left\|s \right\|}$ is a finite-state function.
We again use Lemma~\ref{L:quantifier elimination} to finish.
\end{proof}

\section{An analogue in mixed characteristic}
\label{sec:mixed}

In \cite{k-series2}, an explicit description is given of the completed algebraic closure of a local field of mixed characteristics. This description depends on \cite[Theorem~8]{k-series1} via \cite[Theorem~10]{k-series2}; consequently, we may invoke Theorem~\ref{T:finite field} to see that \cite[Theorem~10, Theorem~11]{k-series2} remain correct when the local field has residue field contained in 
$\overline{\FF}_p$. (In particular, the remarks appearing after the latter result remain valid.) However, the results fail without this additional restriction.

We now use our results to give an alternate characterization of the completed algebraic closure in the language of finite automata.
\begin{defn}
For $n$ a positive integer or $\infty$, let $W_n$ denote the functor of $p$-typical Witt vectors of length $n$; we have a canonical identification 
\[
W_n(K((t^{\QQ}))) \cong W_n(K)((t^{\QQ}))^{\wedge},
\]
where the wedge indicates the $p$-adic completion if $n = \infty$ and otherwise has no effect. It is customary to omit the subscript $n$ when it equals $\infty$.

Using the Witt vector Frobenius $\varphi$ on $W_n(K)$, we may proceed by analogy with 
\S\ref{sec:composed functions} to define \emph{$\varphi$-automatic} functions $f: L_p^0 \to \End_{\ZZ}(M)$ for $M$ a finite free $W_n(K)$-module, as well as \emph{$\varphi$-biautomatic} functions $f: L_p \to \End_{\ZZ}(M)$.
We correspondingly define \emph{$p$-automatic} and \emph{$p$-quasi-automatic} elements of 
$W_n(K)((t^{\QQ}))$.
\end{defn}

We have the following replacement for \cite[Theorem~10]{k-series2}.
\begin{theorem} \label{T:mixed1}
Let $L$ be the integral closure of $K(t)$ in $K((t^{\QQ}))$. Then for $n$ a positive integer or $\infty$, the image of $W_n(L)$ in $W_n(K)((t^{\QQ})))^{\wedge}$ consists of the $p$-quasi-automatic elements.
\end{theorem}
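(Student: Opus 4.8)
The plan is to reduce the Witt vector statement to the characteristic-$p$ statement already established in Theorem~\ref{T:integral closure of rational field} by analyzing Witt vectors componentwise (in ghost/Teichmüchler-like coordinates), exploiting the fact that the Witt vector Frobenius $\varphi$ reduces modulo $p$ to the ordinary Frobenius on $K$. First I would treat the ``automatic implies algebraic'' direction: given a $p$-quasi-automatic element $\underline{x} \in W_n(K)((t^{\QQ}))^{\wedge}$, I want to show it lies in $W_n(L)$. The key point is that $W_n$ of the integral closure $L$ is integrally closed in $W_n(K((t^{\QQ})))$ among the algebraic-over-$W_n(K(t))$ elements, so it suffices to show $\underline{x}$ is integral over $W_n(K(t))$. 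This follows by imitating Lemma~\ref{L:automatic to algebraic}: write $\underline{x} = \tilde\pi \circ g_1 \circ g_2 \circ \tilde\iota$ where $g_1, g_2$ are the two ``summed transfer operators'' built from the $\varphi$-composed data, now working over the module $M' \otimes_{W_n(K)} W_n(K((t^{\QQ})))^{\wedge}$; the self-referential identities $g_1 = 1 + \sum_a g_1 \circ \tau_1(a)$ again pin down $g_1$, and one needs a Witt vector analogue of Lemma~\ref{L:semilinear solution} (a $\varphi$-semilinear fixed point over an extension descends), which can be proved by induction on the length $n$ using the short exact sequence relating $W_n$ to $W_{n-1}$ and $W_1 = K$, invoking Lemma~\ref{L:semilinear solution} at the bottom.

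For the converse, ``algebraic implies automatic'', I would proceed by induction on $n$. For $n=1$ this is exactly Theorem~\ref{T:integral closure of rational field}. For the inductive step, suppose $\underline{x} \in W_n(L)$; its reduction $\overline{\underline{x}} \in W_{n-1}(L)$ is $p$-quasi-automatic by induction, and the difference $\underline{x} - [\text{lift of }\overline{\underline{x}}]$, suitably interpreted, is (after the appropriate identification $p^{n-1} W_n / p^n W_n \cong W_1 = K$) encoded by a single element of $L \subseteq K((t^{\QQ}))_{\aut}$, hence $p$-quasi-automatic by Theorem~\ref{T:integral closure of rational field}. The issue is that the Witt vector operations (addition with carries, multiplication) mix the components nonlinearly, so ``automatic in each component'' must be shown to assemble into ``automatic as a Witt vector''; but the carry polynomials are \emph{fixed} polynomials over $\FF_p$, so this is a bounded amount of extra bookkeeping, and the tensor/Hadamard-product stability from Remark~\ref{R:tensor product} and Remark~\ref{R:automatic properties}(b) handles it. One should also record that $W_n(L)$ really is the integral closure of $W_n(K(t))$ inside $W_n(K((t^{\QQ})))^{\wedge}$, or at least that algebraicity over $W_n(K(t))$ is detected componentwise; this uses the standard fact that for a normal domain $R$ with fraction field having perfect residue characteristic-$p$ quotient, $W_n(R)$ is normal, together with the deformation-theoretic fact that étale covers lift uniquely through $W_n$.

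The main obstacle I expect is the bookkeeping of carries in the ``algebraic implies automatic'' direction: one must show that if each Witt component of $\underline{x}$, viewed through the ghost or Witt coordinates, is $p$-quasi-automatic, then the resulting family satisfies the uniformity needed for $\underline{x}$ itself to be $\varphi$-biautomatic as a function valued in $\End_{\ZZ}$ of a free $W_n(K)$-module --- in particular that the vector spaces (now $W_n(K)$-modules) stay finitely generated after applying the Witt addition and multiplication laws. The cleanest route is probably to avoid coordinates entirely and redo the decimation argument of Theorem~\ref{T:gen Christol} and the truncation arguments of \S\ref{sec:composed functions}--\S 10 directly over $W_n(K)$, noting that every lemma used (Ore's lemma, Krasner's lemma, Corollary~\ref{C:base extend integrally closed}, Corollary~\ref{C:automatic AS}) has a verbatim analogue once $K((t))$ is replaced by $W_n(K)((t))^{\wedge}$, because the only characteristic-$p$-specific input was the Artin--Schreier/decimation identity, whose lift to Witt vectors is exactly the content of the étale-lifting remark above. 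I would then simply remark that the whole development of \S\S\ref{sec:composed functions}--10 transposes \emph{mutatis mutandis}, and cite \cite{k-series2} for the Witt-vector versions of Krasner's lemma and the transfinite Newton algorithm.
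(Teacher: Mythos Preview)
Your proposal is essentially correct and follows the same two-part strategy as the paper: for the forward direction, imitate Lemma~\ref{L:automatic to algebraic} using a Witt-vector analogue of the semilinear fixed-point lemma (the paper simply remarks that \cite[Lemma~3.3.5]{k-series-automata} promotes formally to $W_n(K)$); for the reverse direction, induct on $n$ with base case Theorem~\ref{T:integral closure of rational field}.

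However, you are significantly overcomplicating the inductive step, and your worries about carries are misplaced. The paper never works with Witt coordinates or carry polynomials at all. Given $x \in W_n(L)$, it reduces modulo $p$ to land in $L$, which is $p$-quasi-automatic by the base case; it then lifts the \emph{presentation data} (the module $M'$, the maps $\iota,\pi$, and the $\varphi^{\pm 1}$-composed functions) from $K$ to $W_n(K)$ by choosing arbitrary lifts of the finitely many structure constants, obtaining a $p$-quasi-automatic $y \in W_n(K)((t^{\QQ}))$ with $y \equiv x \pmod{p}$. The already-proved forward direction places $y$ in the image of $W_n(L)$, so $x - y = pz$ with $z \in W_{n-1}(L)$, and induction finishes. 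There is no need to assemble componentwise automaticity, invoke Hadamard stability, or redo decimation, Ore, Krasner, and Corollary~\ref{C:automatic AS} over $W_n(K)$; the only lifting is of a finite amount of linear-algebraic data, not of the series itself. Your alternate organization of the induction (reducing to $W_{n-1}$ first and handling the top slice last) would also work, but it offers no advantage and the lift-from-$K$ step is the one that is transparently easy.
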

\begin{proof}
It suffices to treat the case where $n$ is finite.
Note that \cite[Lemma~3.3.5]{k-series-automata} may be formally promoted to a corresponding statement with $K$ replaced by $W_n(K)$. Using this, we may
imitate the proof of Lemma~\ref{L:automatic to algebraic} to show that any $p$-quasi-automatic element of $W_n(K)((t^{\QQ}))$ belongs to the image of $W_n(L)$. To prove the reverse inclusion, we induct on $n$. The base case $n=1$ follows from Theorem~\ref{T:integral closure of rational field}. In general, if $x \in W_n(L)$, it follows that the image of $x$ in $L$ (via reduction mod $p$) is $p$-quasi-automatic. By lifting the terms in a presentation of $x$, we may construct $y \in W_n(K)((t^{\QQ}))$ which is $p$-quasi-automatic and congruent to $x$ modulo $p$.
Using the forward inclusion, we may write $x = y + pz$ for some $z \in W_{n-1}(L)$, and the induction hypothesis implies that $z$ is $p$-quasi-automatic. This completes the induction.
\end{proof}

By combining this with \cite[Theorem~7]{k-series2}, we formally deduce the following replacement for \cite[Theorem~11]{k-series2}.
\begin{defn}
As in \cite[\S 2]{k-series2}, we define $W(K)((p^{\QQ}))$ as the quotient of
$W(K)((t^{\QQ}))$ (or equivalently $W(K)((t^{\QQ}))^{\wedge}$) by the ideal consisting of those $x = \sum_i x_i t^i$ for which
$\sum_{n \in \ZZ} x_{n+i} p^n = 0$ for all $i \in \QQ$. By \cite[Proposition~2]{k-series2}, this is an algebraically closed field.
Each element of this field admits a unique lift to $W(K)((t^{\QQ}))$ of the form
$\sum_i [x_i] t^i$, where $x_i \in K$ and the brackets denote Teichm\"uller lifts.
\end{defn}

\begin{theorem} \label{T:mixed2}
Let $L$ be the completed integral closure of $K((t))$ in $K((t^{\QQ}))$. Then 
the completed integral closure of $W(K)[p^{-1}]$ in $W(K)((p^{\QQ}))$ is the completion of the image of the $p$-quasi-automatic elements of $W_n(K)((t^{\QQ}))^{\wedge}$.
Moreover, this image consists of those elements whose representation as
$\sum_i [x_i] t^i \in W(K)((t^{\QQ}))$ has the property that $\sum_i x_i t^i \in L$
(or equivalently by Theorem~\ref{T:main completed}, every truncation of $\sum_i x_i t^i$
is $p$-quasi-automatic).
\end{theorem}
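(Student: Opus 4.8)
The plan is to combine Theorem~\ref{T:mixed1} with the already-established structural results of \cite{k-series2}, reducing everything to statements about the characteristic-$p$ reduction. First I would invoke \cite[Theorem~7]{k-series2} to identify the completed algebraic closure of $W(K)[p^{-1}]$ inside $W(K)((p^{\QQ}))$ with (the completion of) the image of the completed integral closure of $W(K)((t))$ in $W(K)((t^{\QQ}))^{\wedge}$ under the quotient map. The point is that $W(K)((p^{\QQ}))$ is obtained from $W(K)((t^{\QQ}))^{\wedge}$ by setting $t = p$ in the appropriate sense, and the kernel of this specialization is precisely the ideal described in the preceding definition; so the completed algebraic closure downstairs is the image of the completed algebraic closure upstairs. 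Thus it suffices to identify the completed integral closure of $W(K)((t))$ in $W(K)((t^{\QQ}))^{\wedge}$.

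Next I would apply Theorem~\ref{T:mixed1}: the integral closure of $W(K)(t)$ in $W(K)((t^{\QQ}))^{\wedge}$ consists of the $p$-quasi-automatic elements, and passing to completions (in the $t$-adic, or equivalently $p$-adic-plus-$t$-adic, topology) we obtain that the completed integral closure of $W(K)((t))$ is the completion of the set of $p$-quasi-automatic elements of $W_n(K)((t^{\QQ}))^{\wedge}$ — here the finite-$n$ truncations assemble into the $n=\infty$ statement exactly as in the proof of Theorem~\ref{T:mixed1}. Combined with the previous paragraph, this already yields the first assertion: the completed integral closure of $W(K)[p^{-1}]$ in $W(K)((p^{\QQ}))$ is the completion of the image of the $p$-quasi-automatic elements.

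For the second assertion, I would use the unique Teichm\"uller representation $\sum_i [x_i] t^i$ of an element of $W(K)((p^{\QQ}))$ and match it against the characteristic-$p$ picture. Reduction modulo $p$ carries $\sum_i [x_i] t^i$ to $\sum_i x_i t^i \in K((t^{\QQ}))$, and the induction on $n$ in the proof of Theorem~\ref{T:mixed1} shows that a Witt-vector series is $p$-quasi-automatic (as an element of $W(K)((t^{\QQ}))^{\wedge}$) if and only if each of its ghost-component-type layers is, which after unwinding the Teichm\"uller expansion amounts to: $\sum_i [x_i] t^i$ lies in the completed integral closure precisely when $\sum_i x_i t^i$ lies in $L$, the completed integral closure of $K((t))$ in $K((t^{\QQ}))$. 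The equivalence with ``every truncation of $\sum_i x_i t^i$ is $p$-quasi-automatic'' is then exactly Theorem~\ref{T:main completed}.

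The main obstacle I anticipate is the bookkeeping needed to show that the Teichm\"uller coordinates behave well under the automaticity condition — that is, that $p$-quasi-automaticity of a Witt-vector series can be detected on the reduction $\sum_i x_i t^i$ rather than requiring a separate condition on each Witt component. This is plausible because the Teichm\"uller lift is a multiplicative (though not additive) section and $p$-quasi-automatic sets of exponents form a Boolean-algebra-like family closed under the relevant operations, but making the argument precise requires carefully tracking how the Witt addition formulas (which involve $p$-th powers and hence Frobenius twists of the coordinates) interact with the $\varphi$-semilinear data defining automaticity. I expect this to go through by the same device used in the induction in Theorem~\ref{T:mixed1}, namely lifting a presentation term-by-term and peeling off one power of $p$ at a time, but it is the step where the argument is least routine.
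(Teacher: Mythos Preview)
Your proposal is correct and follows essentially the same route as the paper: invoke \cite[Theorem~7]{k-series2} for the first assertion, then combine it with Theorem~\ref{T:mixed1} (and the argument of \cite[Theorem~11]{k-series2}) for the second. The paper's proof is a two-line sketch that defers the Teichm\"uller bookkeeping you flag to the published argument for \cite[Theorem~11]{k-series2}; your expanded outline of that step is reasonable and does not diverge from the intended approach.
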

\begin{proof}
The first statement is immediate from \cite[Theorem~7]{k-series2}. 
The second statement follows from the first statement and Theorem~\ref{T:mixed1} as in the
published proof of \cite[Theorem~11]{k-series2}.
\end{proof}

\begin{remark}
We take this opportunity to report some additional errata to \cite{k-series2}. Thanks to Chris Davis for providing some of these.
\begin{itemize}
\item
page 336: the equality in (2) should read
\[
c_n = f(1-b_1^{-1} - \cdots - b_{j-1} p^{-j+1} - p^{-n+l}(b_j p^{-j} + b_{j+1} p^{-j-1} + \cdots)).
\]
\item
page 377: after Theorem 10, ``we nos give'' should be ``we now give''.
\item
page 337: in point 2 above Theorem 11, the definition of $f$ should read $f_i = x_{(j+i)/a}$.
\end{itemize}
\end{remark}

\section{Computational aspects}
\label{sec:computational}

In \cite[\S 8.1]{k-series-automata}, some thoughts are recorded regarding the possible use of finite automata for making machine computations in the algebraic closure of $\FF_q(t)$, for $\FF_q$ a finite field. To our knowledge, no systematic effort has been made to realize these thoughts, with one exception: the very recent paper of Bridy \cite{bridy} gives explicit estimates for the state complexity of the automata required to represent algebraic power series. These bounds take the form $q^n$ where $n$ is a sum of certain geometric invariants.

However, we would like to take this opportunity to point out that recasting the construction in terms of $p$-composed functions may provide some algorithmic improvements. Namely, one of the potential difficulties with computing with automata is the opportunity for an explosion in the number of states required to describe a particular series; for example, given a language described by a finite automaton on $n$ states, its reversal may require as many as $2^n$ states. By contrast, reversing a $p$-composed function entails no explosion, as the underlying vector space is merely replaced with its dual space.
A related observation is that the key parameter when dealing with $p$-automatic series is the dimension of the vector space used to write down the corresponding $p$-composed function; by contrast, if one works in terms of finite automata, then it is the \emph{cardinality} of this vector space that controls the complexity. Since addition and multiplication of $p$-automatic series correspond to direct sum and tensor product of vector spaces, one expects much better behavior in the linear-algebraic approach; namely, by factoring dimensions into account, one might hope to obtain an analogue of Bridy's estimate in which the function $q^n$ is replaced by some polynomial in $n$.

For these reasons, it seems reasonable to try to use $p$-composed functions, rather than automata, as the basis for computing in an algebraic closure of $K(t)$ even in the case when $K$ is the algebraic closure of $\FF_p$. We leave this as a challenge for the interested reader.

\section{Effect on the literature}
\label{sec:effect}

We conclude by analyzing the effect of the error in \cite{k-series1}, and the corrections introduced in this paper, on the mathematical literature.
We begin with the effect on \cite{k-series1} itself.
\begin{itemize}
\item
The statements \cite[Theorem~1, Theorem~2]{k-series1} are taken from previous literature.
\item
The statements \cite[Lemma~3, Lemma~4, Corollary~5, Lemma~7, Lemma~14]{k-series1} are correct with their given proofs.
\item
The statements \cite[Theorem~6, Corollary~9, Corollary~11, Corollary~12, Theorem~15]{k-series1} follow respectively from Corollary~\ref{C:sabc}, Corollary~\ref{C:perfect coefficient subfield},
Theorem~\ref{T:main completed},
Corollary~\ref{C:full truncation}, Theorem~\ref{T:finite field2}.
In the same vein, \cite[Corollary~10]{k-series1} follows from Corollary~\ref{C:imperfect coefficient subfield} once its statement has been corrected to match that result.
\item
As noted earlier, \cite[Theorem~8]{k-series1} is true when $K = \overline{\FF}_p$ by Theorem~\ref{T:finite field}, but false otherwise by Example~\ref{exa:not TR}; the same is true of \cite[Corollary~11]{k-series1}.
\item
The statement \cite[Corollary~13]{k-series1} is false as written, but becomes true if one replaces ``twist-recurrent series'' by ``series algebraic over $K((t))$'' and may then be reinterpreted using Theorem~\ref{T:main}. See
Corollary~\ref{C:radius of convergence}.
\end{itemize}

We next consider the effect on \cite{k-series2}. As described in \S\ref{sec:mixed}, 
\cite[Theorem~10, Theorem~11]{k-series2} fail when $K \neq \overline{\FF}_p$,
and must be replaced by Theorem~\ref{T:mixed1} and Theorem~\ref{T:mixed2}, respectively.
None of the other results of \cite{k-series2} depend on \cite{k-series1}, so they remain unaffected.

We finally consider the effect on papers that cite \cite{k-series1} and/or \cite{k-series2}, as indicated by a MathSciNet search conducted in March 2016. Papers referencing \cite{k-series-automata} independently of \cite{k-series1, k-series2} are unaffected, and are thus not listed here.

\begin{itemize}
\item
The following papers make only passing references to \cite{k-series1} and/or \cite{k-series2}, with no reference to specific results, and are thus unaffected: \cite{abbes-hbaib}, \cite{abbes-hbaib-merkhi},
\cite{brzostowski}, 
\cite{brzostowzki-rodak},
\cite{cutkosky-kashcheyeva}, \cite{davis-kedlaya}, \cite{einsiedler-kapranov-lind},
\cite{firicel}, 
\cite{ghorbel-hbaib-zouari},
\cite{gluzman-yukalov},
\cite{hbaib-laabidi-merkhi},
\cite{hbaib-mahjoub-taktak},
\cite{jensen-markwig-markwig}, \cite{kedlaya-poonen}, \cite{leborgne}, \cite{papanikolas},
\cite{saturnino},
\cite{scheicher-sirvent}.

\item
The following papers only reference \cite{k-series1} (either explicitly or via \cite{k-series2}) in the case where $K = \overline{\FF}_p$,
and are thus unaffected: \cite{adamczewski-bell}, \cite{k-series-automata}, \cite{liu}, \cite{papanikolas}.

\item
The following papers only rely on the prior result \cite[Theorem~1]{k-series1}, and are thus unaffected:
\cite{bays-zilber}, 
\cite{chiarellotto-tsuzuki},
\cite{mosteig-sweedler}.

\item
The paper \cite{payne} references the transfinite Newton recurrence described in \cite[\S 2]{k-series2}, and is thus unaffected.

\item
The paper \cite{k-new-phigamma} references only the projection from generalized power series in mixed characteristic to positive characteristic, as described in \cite[Theorem~7]{k-series2}, and is thus unaffected.

\item
The paper \cite{mosteig} makes two references to \cite{k-series1}. One is in the proof of \cite[Proposition~5.4]{mosteig}, which depends only on the prior result \cite[Theorem~1]{k-series1} and is thus unaffected. The other is more serious: it is \cite[Proposition~5.2]{mosteig}, an explicit computation on twist-recurrent series which is then combined with \cite[Theorem~8]{k-series1} to deduce \cite[Proposition~5.3]{mosteig}.
Fortunately, the latter result holds for a simpler reason: for any homomorphism $\eta: \QQ \to K^\times$, the formula
\[
\sum_{i \in \QQ} x_i t^i \mapsto \sum_{i \in \QQ} \eta(i) x_i t^i
\]
defines an automorphism of $K((t^{\QQ}))$ acting on $K((t))$, which then also acts on the algebraic closure of $K((t))$ in $K((t^{\QQ}))$ (because any ring homomorphism preserves integral dependence). Consequently, \cite{mosteig} is ultimately unaffected.

\end{itemize}

\end{document}